\numberwithin{equation}{section}
\newcommand{\calk}{\mathcal{K}}
\newcommand{\partt}{\frac{\partial}{\partial t}}
\newcommand{\supp}{\mathop{\mathrm{supp}\,}\nolimits}
\newcommand{\real}{\mathbb{R}}
\newcommand{\nat}{\mathbb{N}}
\newcommand{\ud}{\mathrm{d}}
\newcommand{\uD}{\mathrm{D}}
\newtheorem{lemma}{Lemma}[section]
\newtheorem{theorem}[lemma]{Theorem}
\newtheorem{corollary}[lemma]{Corollary}
\newtheorem{definition}[lemma]{Definition}
\newtheorem{assumption}[lemma]{Assumption}
\newtheorem{rem}[lemma]{Remark}
\newcommand{\remark}[1]{\begin{rem}{\upshape #1}\end{rem}}
\newtheorem{example}[lemma]{Example}
\newcommand{\proofstart}{\mbox{P\,r\,o\,o\,f\, :\quad}}
\newcommand{\proofend}{\nopagebreak\hfill\raisebox{0.3em}{\fbox{}}\\}
\newenvironment{proof}{\proofstart}{\proofend}
\newtheorem{problems}{Problem}
\newcommand{\D}{\mathcal{D}}
\newcommand{\V}{\mathcal{K}}
\newcommand{\beq}{\begin{equation}}
\newcommand{\eeq}{\end{equation}}
\begin{document}

\title{Regularity in Sobolev and Besov spaces for parabolic problems on domains of polyhedral type}

\author{
Stephan Dahlke\footnote{Philipps-University Marburg,  FB12 Mathematics and Computer Science, Hans-Meerwein Stra\ss{}e, Lahnberge, 35032 Marburg, Germany. Email: \href{mailto:dahlke@mathematik.uni-marburg.de}{dahlke@mathematik.uni-marburg.de}}
\qquad
Cornelia Schneider\footnote{\emph{Corresponding author}. Friedrich-Alexander University Erlangen-Nuremberg, Applied Mathematics III, Cauerstr. 11, 91058 Erlangen, Germany. Email: \href{mailto:cornelia.schneider@math.fau.de}{cornelia.schneider@math.fau.de}}\ \thanks{The work of this author has been supported by Deutsche Forschungsgemeinschaft (DFG), Grant No. SCHN 1509/1-2.}
}

\date{}
\maketitle

\begin{abstract}
This paper is concerned with the regularity of  solutions {to} linear and nonlinear evolution equations extending our findings in \cite[Thms.~4.5,4.9,4.12,4.14]{DS19} to  domains of polyhedral type.   In particular, we study the smoothness in the specific scale 
$\ B^r_{\tau,\tau}, \ \frac{1}{\tau}=\frac{r}{d}+\frac{1}{p}\ $ of Besov spaces.   The regularity in these spaces determines the approximation order
that can be achieved by adaptive and other nonlinear approximation schemes. We show that for all cases under consideration the Besov regularity is high enough to 
justify the use of adaptive algorithms. 

{\em Key Words:} Parabolic evolution equations, Besov spaces, Kondratiev spaces, adaptive algorithms. \\
{\em Math Subject Classifications. Primary:}   35B65,  35K55, 46E35.   {\em Secondary:} 35L15, 35A02,  35K05,  65M12.

\end{abstract}


\section{Introduction}\label{introduction}

This paper is concerned with regularity estimates of the solutions to
 evolution equations in non-smooth  domains of polyhedral type $D \subset \real^3$, cf. Definition \ref{standard}.
In particular, we study   linear ($\varepsilon=0$) and nonlinear  ($\varepsilon>0$) equations of the form 
\begin{equation} \label{parab-1a-i}
\frac{\partial }{\partial t}u+(-1)^mL(t,x,D_x)u +\varepsilon u^{M}\ =\ f \quad    \text{ in }\  [0,T]\times D, 
\end{equation}
 with zero initial and Dirichlet boundary conditions, where  $m,M\in \nat$,  and $L$ denotes a uniformly elliptic operator of order $2m$ with sufficiently smooth coefficients. 
 Special attention is paid to the spatial regularity of the
solutions to  (\ref{parab-1a-i})  in specific
non-standard  smoothness spaces, i.e., in the so-called {\em adaptivity
scale {of Besov spaces}}
\begin{equation} \label{adaptivityscale}
B^r_{\tau,\tau}(D), \quad \frac{1}{\tau}=\frac{r}{3}+\frac{1}{p}, \quad r>0.
\end{equation}

Our investigations are motivated by  fundamental questions arising in the context of the numerical treatment of equation  \eqref{parab-1a-i}. In particular, we aim at justifying the use of adaptive numerical methods for parabolic PDEs. 
Let us explain these relationships in more detail:   In an adaptive strategy, the choice of the underlying degrees of freedom is not a priori fixed but depends on the shape of the unknown solution. In particular, additional degrees of freedom are only spent in regions where the numerical approximation is still 'far away' from the exact solution.
Although the basic idea is convincing,  adaptive algorithms are hard to implement, so that beforehand a rigorous mathematical analysis to justify their use  is highly desirable. 

Given an adaptive algorithm based on a dictionary for the solution spaces of the PDE, the best one can expect is an optimal performance in the sense that it realizes the convergence rate of best $N$-term approximation schemes, which serves as a benchmark in this context.  Given a dictionary
 $\Psi =\{\psi_{\lambda}\}_{\lambda \in \Lambda}$  of functions in a Banach space $X$,
the error of best $N$-term approximation is defined as
\begin{equation}\label{error-n-term}
\sigma_N\bigl(u;X\bigr)=\inf_{\Gamma\subset\Lambda:\#\Gamma\leq
N}\inf_{c_\lambda}
                \biggl\|u-\sum_{\lambda \in\Gamma}c_{\lambda}\psi_{\lambda}\big|X\biggr\|\, , 
\end{equation}
i.e., as the name suggests we consider the best approximation by linear combinations of the basis functions consisting of at most $N$ terms. 
 In particular, \cite[Thm. 11, p.~586]{DNS2} implies for $\tau<p$, 
\begin{equation*}
    \sigma_N\bigl(u;L_p(D)\bigr)\leq C\,N^{-s/d}\|u|B^s_{\tau,\tau}(D)\|, \qquad  \frac{1}{\tau}<\frac{s}{3}+\frac 1p. 
\end{equation*}
Quite recently, it has turned out that the same interrelations also hold for the very important and  widespread adaptive finite
 element schemes. In particular,  \cite[Thm. 2.2]{GM09} gives direct estimates,
\begin{equation*}
    \sigma_N^{FE}\bigl(u;L_p(D)\bigr)\leq C\,N^{-s/d}\|u|B^s_{\tau,\tau}(D)\|\,,  
\end{equation*}
 where $\sigma_N^{FE}$ denotes the counterpart to the quantity $\sigma_N(u;X)$, which corresponds to wavelet approximations. It can be seen that 
the achievable order of adaptive algorithms depends on the regularity of the target function in the specific scale of Besov spaces \eqref{adaptivityscale}. On the other hand it is the regularity of the solution in the scale of Sobolev spaces, which encodes information on the convergence order for nonadaptive (uniform) methods.  
From this we can draw the following conclusion: adaptivity is justified,
 if the Besov regularity of the solution
in the Besov scale
(\ref{adaptivityscale}) is higher than its Sobolev smoothness!

For the case of {\em elliptic} partial differential equations, a lot of positive results in this direction are already established  \cite{Dah98, Dah99a, Dah99b,Dah02,DDD,DDV97,DDHSW,Han15,HW18}. It is well--known
that  if the domain under consideration, the right--hand side and the coefficients are sufficiently smooth, then the problem is completely regular \cite{ADN59}, and there
is no reason why the  Besov smoothness should be higher than the Sobolev regularity. However, on general Lipschitz domains and in particular in polyhedral domains, the situation changes
dramatically.  On  these domains, singularities at the boundary may occur that diminish the Sobolev regularity of the solution significantly \cite{CW20,Cost19,JK95, Gris92, Gris11}.
However, the analysis in the above mentioned papers shows that these boundary singularities do not influence the Besov regularity too much, so that the use of
adaptive algorithms for elliptic PDEs is completely justified!

 In this paper, we study similar questions for evolution equations of the form  
(\ref{parab-1a-i}) and of associated 
semilinear versions. 
To the best of our knowledge, not so many results in this direction are available
so far.  For parabolic equations, first  results   for the special case of the heat equation have been reported 
in \cite{AGI08, AGI10, AG12}, but for a slightly different scale of Besov spaces.

Our results show in the linear case $\varepsilon=0$ that if the right-hand side as well as its time derivatives are
contained in specific Kondratiev spaces, then, for every $t \in [0,T]$  the spatial Besov smoothness  
of the solution to \eqref{parab-1a-i} is always larger than  $2m$, provided that some technical conditions on the operator pencils are satisfied, see Theorems \ref{thm-parab-Besov} and \ref{thm-parab-Besov-2}.
The reader should observe that the results are independent of the shape of the polyhedral domain, and that the classical Sobolev smoothness
is usually limited by  $m$, see \cite{LL15}.  Therefore, for every $t$, the spatial Besov regularity is more than twice as
high as the Sobolev smoothness, which of course justifies the use of (spatial) adaptive algorithms. 
Moreover,  for smooth domains
and  right-hand sides in $L_2,$ the best one could  expect would be smoothness order $2m$ in 
the classical Sobolev scale. So, the Besov smoothness on polyhedral type domains is at least as high as the Sobolev smoothness 
on smooth domains. \\
Afterwards, we generalize this result to nonlinear parabolic  equations of the form \eqref{parab-1a-i}. We show that in a sufficiently small ball containing the solution of the corresponding linear equation, there exists
a unique solution to  
\eqref{parab-1a-i} possessing the same Besov smoothness in the scale 
\eqref{adaptivityscale}.  The proof is performed by a technically quite involved application of the Banach fixed point theorem. 
The final result is stated in Theorem \ref{nonlin-B-reg3}. \\
The next natural step is to also  study  the regularity in time direction.  For the linear parabolic problem \eqref{parab-1a-i} with $\varepsilon=0$ we show that the mapping  $t\mapsto u(t, \cdot)$  is in fact a $C^l$-map into the adaptivity scale
 of Besov spaces,  
precisely, 
$$u \in \mathcal{C}^{l,\frac{1}{2}}((0,T), B^{\alpha}_{\tau,\infty}(D)), $$
see Theorem \ref{Hoelder-Besov-reg}.

In conclusion, the results presented in this paper imply that for each $t \in (0,T)$ the spatial Besov regularity of the unknown solutions of the problems studied here is much higher than the Sobolev regularity,  which justifies the use of spatial adaptive  algorithms.  This corresponds to the classical time-marching schemes such as the Rothe method. We refer e.g. to the monographs \cite{Lan01, Tho06} for a detailed discussion. Of course,  it would be tempting to employ adaptive strategies in the whole space-time cylinder.  First results in this direction have been reported in \cite{SS09}.  To justify also these schemes,  Besov regularity in the whole space-time cylinder has to be established. This case will be studied in a forthcoming paper.\\ 
Throughout the paper we use the same notation as in \cite{DS19}, which for the convenience of the reader is  recalled in Appendix \ref{app-not}.

\section{Sobolev and Kondratiev spaces}\label{Sect-2}

In this section we briefly collect the  basics  concerning   weighted and unweighted Sobolev  spaces needed later on. In particular, we put $H^m=W^m_2$ and denote by $\mathring{H}^m$ the closure of test functions in $H^m$ and its  dual space by $H^{-m}$. 
Moreover,  $\mathcal{C}^{k,\alpha}$, $k\in \nat_0$, stands for the usual H\"older spaces with exponent $\alpha\in (0,1]$. 
The following generalized version of Sobolev's embedding theorem for Banach-space valued functions will be useful, cf. \cite[Thm.~1.2.5]{co-habil}.

\begin{theorem}[{\bf Generalized Sobolev's embedding theorem}]\label{thm-sob-emb}\index{Sobolev's embedding theorem! generalized}
Let $1<p<\infty$,  $m\in \nat$, $I\subset \real$ be some bounded interval, and $X$ a Banach space. Then 
\begin{equation}
W^{m}_p(I,X)\hookrightarrow \mathcal{C}^{m-1,1-\frac 1p}(I,X). 
\end{equation}
\end{theorem}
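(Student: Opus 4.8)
The plan is to prove the embedding by induction on $m$, the entire difficulty being concentrated in the base case $m=1$. Assume for the moment that $W^1_p(I,X)\hookrightarrow\mathcal{C}^{0,1-\frac1p}(I,X)$ holds with an operator norm depending only on $p$ and $|I|$. If $u\in W^m_p(I,X)$ with $m\geq2$, then on the one hand $u\in W^1_p(I,X)$, hence $u\in\mathcal{C}^0(I,X)$; on the other hand its weak derivative satisfies $u'\in W^{m-1}_p(I,X)$, so by the induction hypothesis $u'\in\mathcal{C}^{m-2,1-\frac1p}(I,X)$. A continuous function whose weak derivative admits a continuous representative is classically differentiable with derivative equal to that representative, so $u\in\mathcal{C}^1(I,X)$ with $u'$ its classical derivative, and therefore $u\in\mathcal{C}^{m-1,1-\frac1p}(I,X)$. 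Chaining the norm estimates from the successive steps yields the quantitative bound.

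It remains to treat $m=1$. The core point is the vector-valued fundamental theorem of calculus: every $u\in W^1_p(I,X)$ possesses a representative, not relabelled, that is absolutely continuous on $\bar I$ and satisfies
\[
u(t)-u(s)=\int_s^t u'(\sigma)\,\ud\sigma,\qquad s,t\in I .
\]
Granting this, the triangle inequality for the Bochner integral together with Hölder's inequality (legitimate since $1<p<\infty$, so that the conjugate exponent $p'$ is finite) gives
\[
\|u(t)-u(s)\|_X\ \leq\ \int_{\min(s,t)}^{\max(s,t)}\|u'(\sigma)\|_X\,\ud\sigma\ \leq\ |t-s|^{1/p'}\,\|u'|L_p(I,X)\|\ =\ |t-s|^{1-\frac1p}\,\|u'|L_p(I,X)\| ,
\]
which is precisely the desired bound on the $\mathcal{C}^{0,1-\frac1p}$-seminorm. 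For the sup-norm part one fixes $t\in I$ and integrates the inequality $\|u(t)\|_X\leq\|u(s)\|_X+|t-s|^{1-\frac1p}\|u'|L_p(I,X)\|$ over $s\in I$; applying Hölder's inequality to $s\mapsto\|u(s)\|_X$ and estimating $\int_I|t-s|^{1-\frac1p}\,\ud s$ crudely by $|I|^{2-\frac1p}$, one obtains $\|u|\mathcal{C}^0(\bar I,X)\|\leq C(|I|,p)\,\|u|W^1_p(I,X)\|$. Combining the two estimates proves $W^1_p(I,X)\hookrightarrow\mathcal{C}^{0,1-\frac1p}(I,X)$.

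The only non-formal ingredient — and hence the main obstacle — is establishing the absolutely continuous representative and the integral identity above. I would obtain it from the density of $\mathcal{C}^\infty(\bar I,X)$ in $W^1_p(I,X)$ (valid because the bounded interval $I$ is an extension domain, so one extends, mollifies, and restricts): for smooth $X$-valued functions the identity is classical, and the estimates just derived show that any sequence Cauchy in $W^1_p(I,X)$ is Cauchy in $\mathcal{C}^0(\bar I,X)$, so one may pass to the limit and retain both the continuity (indeed Hölder continuity) of the limit and the validity of the integral formula. Alternatively, one sets $v(t):=\int_{t_0}^t u'(\sigma)\,\ud\sigma$ (well defined since $u'\in L_p(I,X)\subset L_1(I,X)$ on the bounded interval), checks by Fubini that $v$ has weak derivative $u'$, and uses that a Bochner-integrable function with vanishing weak derivative is a.e. constant — the latter reduced to the scalar case via functionals $x^*\in X^*$ — to conclude $u=v+\text{const}$ almost everywhere. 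Either way, beyond this point nothing further is subtle: Hölder's inequality and the triangle inequality for the Bochner integral do all the remaining work, exactly as in the reference \cite{co-habil}.
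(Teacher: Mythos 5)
Your proof is correct, and it is the standard argument; the paper itself offers no proof of this statement but simply cites \cite[Thm.~1.2.5]{co-habil}, so there is nothing to contrast it with in the source. The base case via the vector-valued fundamental theorem of calculus plus H\"older's inequality, followed by induction using the fact that a continuous function whose weak derivative has a continuous representative is classically $C^1$, is exactly the expected route, and the one genuinely non-formal point --- producing the absolutely continuous representative --- is adequately handled by either of the two reductions you sketch (density of smooth functions, or the antiderivative $v(t)=\int_{t_0}^t u'$ together with the fact that a vanishing weak derivative forces a.e.\ constancy, reduced to the scalar case by applying functionals $x^*\in X^*$).
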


Here the Banach-valued  Sobolev spaces are endowed with the  norm 
\[
\|u|W^{m}_p(I,X)\|^p:=\sum_{k=0}^m \|\partial_{t^k}u|L_p(I,X)\|^p \quad \text{with}\quad \|\partial_{t^k}u|L_p(I,X)\|^p:=\int_I \|\partial_{t^k}u(t)|X\|^p~\ud t,  
\]
whereas for the H\"older spaces we use 
\[
\|u|\mathcal{C}^{k,\alpha}(I,X)\|:=\|u|C^k(I,X)\|+|u^{(k)}|_{C^{\alpha}(I,X)},
\]
where $\|u|C^k(I,X)\|=\sum_{j=0}^k\max_{t\in I}\|u^{(j)}(t)|X\|$ and $|u^{(k)}|_{C^{\alpha}(I,X)}=\sup_{{s,t\in I,}\atop  {s\neq t}}\frac{\|u^{(k)}(t)-u^{(k)}(s)|X\|}{|t-s|^{\alpha}}$.

We collect some notation for specific Banach-space valued Lebesgue and Sobolev spaces, which will be used when studying the regularity of solutions of parabolic PDEs. \\

Let   $\Omega_T:=[0,T]\times\Omega$. Then we abbreviate   
\[
L_p(\Omega_T):=L_p([0,T], L_p(\Omega)).    
\]\label{extra-1}

Moreover, we put \label{extra-4}
\[
H^{m,l*}(\Omega_T):=H^{l-1}([0,T],\mathring{H}^m(\Omega))\cap H^l([0,T],H^{-m}(\Omega)) 
\]
normed by 
\[\|u|H^{m,l*}(\Omega_T)\|=\|u|H^{l-1}([0,T],\mathring{H}^m(\Omega))\|+\|u|H^l([0,T],H^{-m}(\Omega))\|.\]

\subsection{Kondratiev spaces}

In the sequel we work to a great extent with   weighted Sobolev spaces,  the so-called {\em Kondratiev spaces} $\V^m_{p,a}(\mathcal{O})$,   defined as the collection of all  $u\in \mathcal{D}'(\mathcal{O})$, which have $m$ generalized derivatives satisfying 

\begin{equation}\label{Kondratiev-1}
\|u|\V^m_{p,a}(\mathcal{O})\|:=\left(\sum_{|\alpha|\leq m}\int_{\mathcal{O}} |\varrho(x)|^{p(|\alpha |-a)}|D^{\alpha}_x u(x)|^p\ud x\right)^{1/p}<\infty,
\end{equation}
where $a\in \real$, $1<p<\infty$, $m\in \nat_0$, $\alpha\in \nat^n_0$, and the weight function $\varrho: D\rightarrow [0,1]$ is the smooth distance to the singular set of $\mathcal{O}$, i.e., $\varrho$ is a smooth function and in the vicinity of the singular set $S$  it is {equivalent} to the distance to that set.  Clearly, if $\mathcal{O}$ is a polygon in $\real^2$ or a  polyhedral domain in $\real^3$, then  the  singular set  $S$ consists of the vertices of the polygon or the vertices and edges of the polyhedra, respectively.

It follows directly from \eqref{Kondratiev-1} that the scale of Kondratiev spaces is monotone in $m$ and $a$, i.e., 
\beq\label{kondratiev-emb}
\V^m_{p,a}(\mathcal{O})\hookrightarrow \V^{m'}_{p,a}(\mathcal{O})\quad \text{and}\quad \V^m_{p,a}(\mathcal{O})\hookrightarrow \V^m_{p,a'}(\mathcal{O}),
\eeq 
if $m'<m$ and $a'<a$.

Moreover, generalizing the above concept to functions depending on the time $t\in [0,T]$, we define Kondratiev type spaces,    denoted by $L_q((0,T),\V^m_{p,a}(\mathcal{O}))$, which   contain all functions $u(x,t)$ such that 
\begin{align}
\|u|&L_q((0,T), \V^m_{p,a}(\mathcal{O}))\|\notag\\
&:=\left(\int_{(0,T)}\left(\sum_{|\alpha|\leq m}\int_{\mathcal{O}} |\varrho(x)|^{p(|\alpha |-a)}|D^{\alpha}_x u(x,t)|^p\ud x\right)^{q/p}\ud t\right)^{1/q}<\infty, \label{Kondratiev-3}
\end{align}
with $0<q\leq \infty$ and  parameters $a,p,m$  as above.

\paragraph{Kondratiev spaces on domains of polyhedral type}
\label{domains}

For our analysis we make use of several properties of Kondratiev spaces that have been proved in \cite{DHS17a}. 
Therefore,   in our later considerations, we will mainly be interested in the case that   $\mathcal{O}$ is a bounded domain of polyhedral type.

The precise definition below is taken from  Maz'ya and Rossmann \cite[Def.~4.1.1]{MR10}.

\begin{definition}\label{standard}
A bounded domain $D\subset \real^3$ is defined to be  of polyhedral type if the following holds: \\
\begin{figure}[H]
\begin{minipage}{0.55\textwidth}
\begin{itemize}
\item[(a)] The boundary $\partial D$ consists of smooth (of class $C^{\infty}$) open two-dimensional manifolds $\Gamma_j$ (the faces of $D$), $j=1,\ldots, n$, smooth curves $M_k$ (the edges), $k=1,\ldots, l$, and vertices $x^{(1)}, \ldots, x^{(l')}$. 
\item[(b)] For every $\xi\in M_k$ there exists a neighbourhood $U_{\xi}$ and a $C^{\infty}$-diffeomorphism  $\kappa_{\xi}$ which maps $D\cap U_{\xi}$ onto $\mathcal{D}_{\xi}\cap B_1(0)$, where $\mathcal{D}_{\xi}\subset \real^3$ is a dihedron, which in polar coordinates can be described as 
 \end{itemize}
\end{minipage}\hfill 
\begin{minipage}{0.25\textwidth}
\includegraphics[width=4cm]{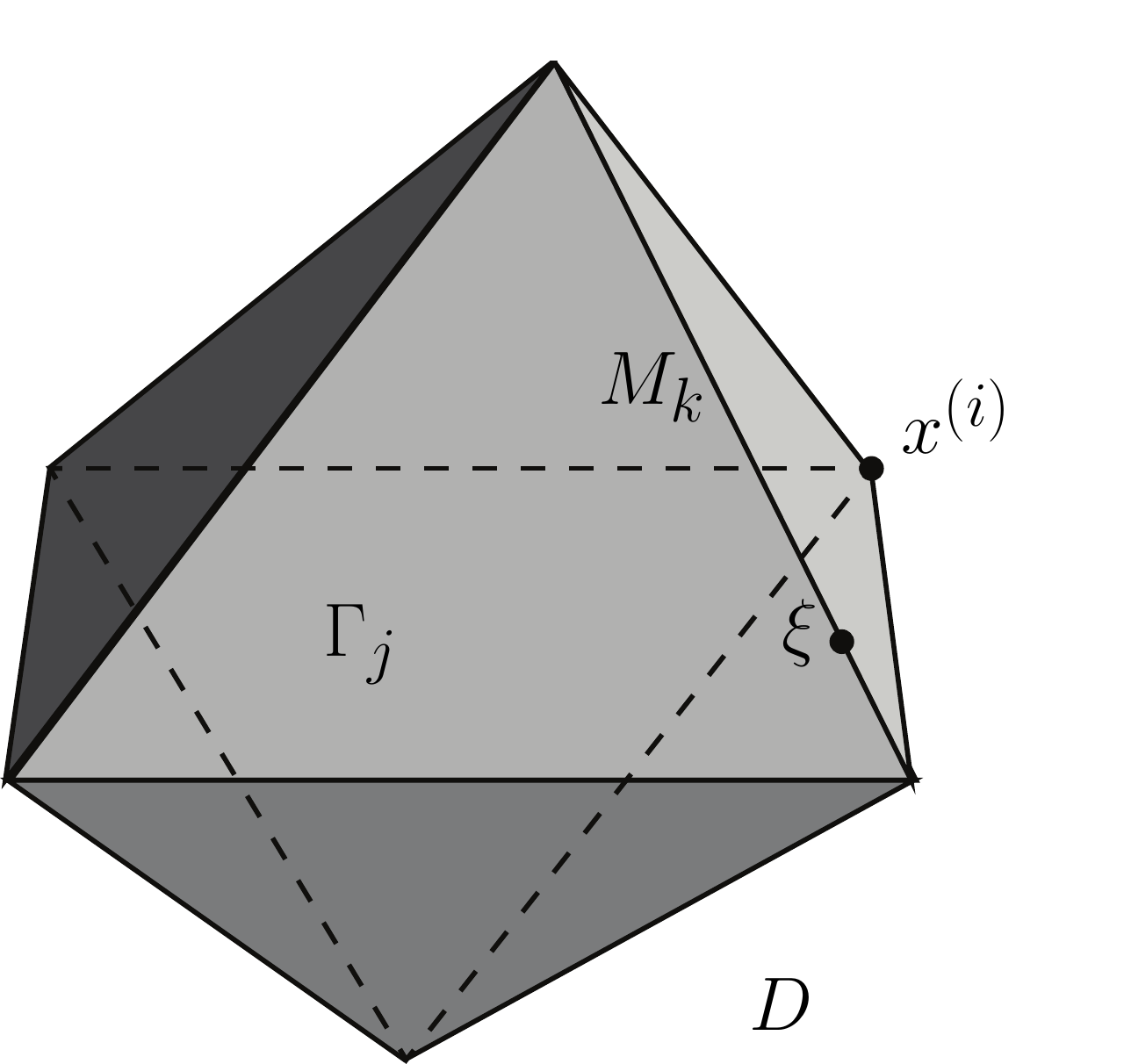}
\caption[Polyhedron]{Polyhedron}
\end{minipage}\\
\begin{itemize}
    \item[] 
\[
\mathcal{D}_{\xi}=K\times \real, \qquad K=\{(x_1,x_2): \ 0<r<\infty, \ -\theta/2<\varphi<\theta/2\},
\]
where the opening angle $\theta$ of the 2-dimensional wedge $K$ satisfies $0<\theta\leq 2\pi$. 
\item[(c)] For every vertex $x^{(i)}$ there exists a neighbourhood $U_i$ and a diffeomorphism $\kappa_i$ mapping $D\cap U_i$
 onto $K_i\cap B_1(0)$, where $K_i$ is a polyhdral cone with edges and vertex at the origin. 
 \end{itemize}
\end{figure}

\end{definition}

\begin{rem}\label{rem-def-domain} \label{notation}
\begin{itemize}
\item[(i)]
In the literature many different types of polyhedral domains are considered. A more general version which coincides with the above definition when $d=3$ is discussed in \cite{DHS17a}.
Further variants of polyhedral domains can be found in 
Babu\v{s}ka,  Guo \cite{BG97},  Bacuta, Mazzucato, Nistor, Zikatanov \cite{BMNZ} 
and Mazzucato, Nistor \cite{NistorMazzucato}.
\item[(ii)] Let us point out that 'smooth' domains without edges and/or vertices are admissible in Defintion \ref{standard}. We discuss this further in Section \ref{subsect-op-pen}. 
\end{itemize}
\end{rem}

\smallskip

\paragraph{Some properties of Kondratiev spaces}

Concerning pointwise multiplication the following results are proven in \cite{DHS17a}.

\begin{corollary}\label{thm-pointwise-mult-2}
\begin{itemize}
\item[(i)] Let $m\in \nat$, $a\geq \frac 3p$, and either $1<p<\infty$ and $m>\frac 3p$ or $p=1$ and $m\geq 3$. 
Then the Kondratiev space $\calk^m_{a,p}(D)$ is an algebra with respect to pointwise multiplication, i.e.,  there exists a constant $c$ such that 
\[
\|uv| \mathcal{K}^{m}_{a,p}(D)\|\leq c\|u|\mathcal{K}^{m}_{a,p}(D)\|\cdot \|v|\mathcal{K}^{m}_{a,p}(D)\|
\]
holds for all $u,v\in \mathcal{K}^{m}_{a,p}(D)$.
\item[(ii)] Let $\frac{3}{2}<p<\infty$, $m\in \nat$, and $a\geq \frac{3}{p}-1$. Then there exists a constant $c$ such that 
\[
\|uv| \mathcal{K}^{m-1}_{a-1,p}(D)\|\leq c\|u|\mathcal{K}^{m+1}_{a+1,p}(D)\|\cdot \|v|\mathcal{K}^{m-1}_{a-1,p}(D)\|
\]
holds for all $u\in \mathcal{K}^{m+1}_{a+1,p}(D)$ and $v\in \mathcal{K}^{m-1}_{a-1,p}(D)$.
\end{itemize}
\end{corollary}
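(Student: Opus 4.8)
Both estimates are assertions about the behaviour of $uv$ near the singular set $S$ of $D$, i.e.\ its vertices and edges; away from $S$ the weight $\varrho$ is bounded above and below, so $\mathcal{K}^m_{a,p}$ coincides with an ordinary Sobolev space on a smooth bounded subdomain, and there the claims reduce to the classical facts that $W^m_p$ is a multiplication algebra for $m>\frac3p$ (or $m\ge3$, $p=1$) and that $W^{m+1}_p\cdot W^{m-1}_p\hookrightarrow W^{m-1}_p$. The plan is thus to localise near $S$ and there reduce, by a dyadic decomposition and rescaling, to the same classical statements on fixed reference domains. First I would fix a finite cover of a neighbourhood of $S$ by the coordinate patches of Definition~\ref{standard} with a subordinate smooth partition of unity; since $\mathcal{K}^m_{a,p}$ is invariant (with equivalent norms) under the diffeomorphisms $\kappa_\xi,\kappa_i$ and stable under multiplication by smooth bounded functions (because $\varrho\le1$), it suffices to work on the model dihedron $K\times\real$ near an edge and on the model polyhedral cone $K_i$ near a vertex, where $\varrho(x)$ is comparable to the distance to the edge, respectively the vertex.

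On such a model domain I would perform a dyadic decomposition in the distance to the relevant part of $S$ --- for a genuine polyhedron a nested one: dyadic shells $\{2^{-j-1}\le|x|\le2^{-j+1}\}$ around a vertex and, within each shell, a further dyadic subdivision by the distance to the abutting edges. On each resulting cell $\Omega_\nu$ one has $\varrho\sim2^{-\nu}$, and an affine change of variables $x=2^{-\nu}y$ (anisotropic near an edge, where only the transverse variables are rescaled) maps $\Omega_\nu$ onto a reference domain $\Omega$ of unit size, independent of $\nu$. Writing $\widetilde u_\nu(y):=u(2^{-\nu}y)$, the scaling behaviour of the derivatives and of the Jacobian yields, for the relevant indices,
\[
\|u\,|\,\mathcal{K}^{m}_{a,p}(\Omega_\nu)\|\ \sim\ 2^{\nu(a-\frac3p)}\,\|\widetilde u_\nu\,|\,W^{m}_p(\Omega)\|
\]
near a vertex, with constants independent of $\nu$; near an edge the transverse variables give the analogous identity with $\frac3p$ replaced by $\frac2p$, and the tangential derivatives are split off via the Leibniz rule and absorbed using the monotonicity \eqref{kondratiev-emb}.

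Next, on the fixed domain $\Omega$ I would invoke the classical multiplication theorem: for~(i), $W^m_p(\Omega)$ is a Banach algebra because $m>\frac3p$ (or $m\ge3$, $p=1$); for~(ii), since $p>\frac32$ one has $\frac3p<2$, so in the Leibniz expansion of $D^\beta(\widetilde u_\nu\widetilde v_\nu)$ with $|\beta|\le m-1$ every term $D^\gamma\widetilde u_\nu\cdot D^{\beta-\gamma}\widetilde v_\nu$ has $D^\gamma\widetilde u_\nu\in W^{m+1-|\gamma|}_p(\Omega)\hookrightarrow L_\infty(\Omega)$ (as $m+1-|\gamma|\ge2>\frac3p$) and $D^{\beta-\gamma}\widetilde v_\nu\in L_p(\Omega)$, giving $\|\widetilde u_\nu\widetilde v_\nu\,|\,W^{m-1}_p(\Omega)\|\le c\,\|\widetilde u_\nu\,|\,W^{m+1}_p(\Omega)\|\,\|\widetilde v_\nu\,|\,W^{m-1}_p(\Omega)\|$, with $c=c(\Omega)$ independent of $\nu$. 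Transforming back by the scaling identity (applied to each of the three spaces involved) yields a per-cell estimate
\[
\|uv\,|\,\mathcal{K}^{m}_{a,p}(\Omega_\nu)\|\ \le\ c\,2^{-\nu(a-\frac3p)}\,\|u\,|\,\mathcal{K}^{m}_{a,p}(\Omega_\nu)\|\,\|v\,|\,\mathcal{K}^{m}_{a,p}(\Omega_\nu)\|
\]
in case~(i), and the analogue with prefactor $2^{-\nu(a+1-\frac3p)}$ and indices $\mathcal{K}^{m+1}_{a+1,p}$, $\mathcal{K}^{m-1}_{a-1,p}$ in case~(ii).

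It remains to sum over the cells, and this is exactly where the hypotheses on $a$ enter: $a\ge\frac3p$ (resp.\ $a\ge\frac3p-1$) makes the exponent $a-\frac3p$ (resp.\ $a+1-\frac3p$) nonnegative, so the prefactors $2^{-\nu(\cdot)}$ are bounded uniformly in $\nu$; raising to the $p$-th power, bounding one of the two factors by the full norm over $D$, and using the finite overlap of the cells to control $\sum_\nu\|u\,|\,\mathcal{K}(\Omega_\nu)\|^p$ by $\|u\,|\,\mathcal{K}(D)\|^p$ then gives the claim. The main obstacle, and the source of the technical difficulties, is the polyhedral geometry near the edges: the nested decomposition must be arranged so that all reference cells stay of comparable unit size in every variable --- which forces the anisotropic rescaling and the Leibniz splitting of the tangential derivatives --- and the vertex-scale and edge-scale decompositions must be matched consistently where an edge meets a vertex. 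In case~(ii) there is the additional bookkeeping of verifying that \emph{each} Leibniz term gains precisely one derivative and one power of $\varrho$, which is what pins down the threshold $p>\frac32$.
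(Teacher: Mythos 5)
First, a remark on the comparison: the paper does not prove this corollary at all --- it is quoted from \cite{DHS17a} --- so the only meaningful benchmark is the argument in that reference, which indeed rests on exactly the localisation principle you describe. Your overall strategy is the right one: localise via the coordinate patches of Definition~\ref{standard} and a partition of unity, decompose a neighbourhood of the singular set $S$ into dyadic cells on which $\varrho$ is essentially constant, rescale each cell to a fixed reference domain, apply there the classical Sobolev results ($W^m_p$ is an algebra for $m>\frac3p$, resp.\ $\|uv\,|\,W^{m-1}_p\|\lesssim\|u\,|\,W^{m+1}_p\|\,\|v\,|\,W^{m-1}_p\|$ for $p>\frac32$ via Leibniz and $W^2_p\hookrightarrow L_\infty$ in dimension $3$), and use $a\ge\frac3p$ resp.\ $a\ge\frac3p-1$ to make the geometric prefactors bounded before summing with finite overlap. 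The exponent bookkeeping in the vertex case and the final summation step are correct.

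There is, however, a genuine flaw in the execution near the edges. The norm \eqref{Kondratiev-1} weights \emph{every} derivative $D^\alpha u$, including those tangential to the edge, by $\varrho^{|\alpha|-a}$, so it is not homogeneous under the anisotropic dilation you propose (rescaling only the transverse variables). On a cell with $\varrho\sim 2^{-\nu}$ the term with multi-index $\alpha=(\alpha',\alpha_3)$ picks up an extra factor $2^{-\nu p\alpha_3}$ after this rescaling; consequently your claimed two-sided equivalence only holds as the bound $\|u\,|\,\mathcal{K}^m_{a,p}(\Omega_\nu)\|\lesssim 2^{\nu(a-2/p)}\|\widetilde u_\nu\,|\,W^m_p(\Omega)\|$, whereas the reverse inequality --- the one you actually need in order to convert $\|\widetilde u_\nu\,|\,W^m_p(\Omega)\|$ and $\|\widetilde v_\nu\,|\,W^{m}_p(\Omega)\|$ back into Kondratiev norms on the right-hand side of the per-cell product estimate --- fails uniformly in $\nu$ whenever $\alpha_3>0$. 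I do not see how ``splitting off the tangential derivatives by Leibniz and absorbing them via \eqref{kondratiev-emb}'' repairs this, since the monotonicity of the scale goes in the wrong direction here. The standard remedy is to abandon the anisotropy: take a Whitney-type decomposition of $D\setminus S$ into \emph{isotropic} cubes whose side length is comparable to their distance to $S$ (near an edge these are cubes strung along the edge, not full tubes around it). Then the isotropic rescaling $x=2^{-\nu}y$ yields the clean two-sided identity with exponent $a-\frac3p$ in all cases, edges and vertices alike, the matching of scales where an edge meets a vertex is automatic, and the remainder of your argument goes through unchanged.
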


Our main tool when investigating the Besov regularity of  solutions to the PDEs will be the following embedding result  between  Kondratiev  and Besov spaces, which is an extension of  \cite[Thm.~1]{Han15}. A proof may be found in \cite[Thm.~1.4.12]{co-habil}.

\begin{theorem}[{\bf Embeddings between Kondratiev and Besov spaces}]\label{thm-hansen-gen}
Let $D\subset \real^3$ be some  polyhedral type domain and assume $k\in \nat_0$, $0<q\leq \infty$. Furthermore, let $s, a\in \real$, $\gamma\in \nat_0$, and  suppose $\min(s,a)>\frac{\delta}{3}\gamma$, where $\delta$ denotes the dimension of the singular set (i.e., $\delta=0$ if there are only vertex singularities and $\delta=1$ if there are edge and vertex singularities). Then there exists some $0<\tau_0\leq p$ such that 
{\begin{equation}\label{emb-hansen-gen-sob}
W^k_q([0,T],\calk^{\gamma}_{p,a}(D))\cap W^k_q([0,T],B^s_{p,\infty}(D))\hookrightarrow W^k_q([0,T],B^{\gamma}_{\tau,\infty}(D))  
\end{equation}}
for all $\tau_{\ast}<\tau<\tau_0$, where $\frac{1}{\tau_{\ast}}=\frac {\gamma}{3}+\frac 1p$. 
\end{theorem}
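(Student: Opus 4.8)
The plan is to deduce the assertion from its purely spatial, time-independent counterpart by a soft transfer argument, so that all the genuine harmonic analysis stays in the space variable. The key ingredient is the stationary embedding
\beq\label{stat-emb-plan}
\calk^{\gamma}_{p,a}(D)\cap B^{s}_{p,\infty}(D)\hookrightarrow B^{\gamma}_{\tau,\infty}(D),\qquad \tau_{\ast}<\tau<\tau_{0},\quad \tfrac{1}{\tau_{\ast}}=\tfrac{\gamma}{3}+\tfrac{1}{p},
\eeq
valid on a polyhedral type domain $D$ under the hypothesis $\min(s,a)>\frac{\delta}{3}\gamma$. This is the extension of \cite[Thm.~1]{Han15}; see also \cite[Thm.~1.4.12]{co-habil}. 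One proves it by splitting $u\in\calk^{\gamma}_{p,a}(D)\cap B^{s}_{p,\infty}(D)$ into dyadic layers at distance $\sim2^{-j}$ from the singular set $S$, bounding on each layer a suitably rescaled Sobolev norm by the Kondratiev norm, and summing the corresponding wavelet (or atomic) contributions in $B^{\gamma}_{\tau,\infty}(D)$ — the borderline relation $1/\tau_{\ast}=\gamma/3+1/p$ together with $\tau<p$ and $\min(s,a)>\frac{\delta}{3}\gamma$ being exactly what forces the $\ell_{\infty}$-sum over $j$ to converge. For what follows only two features of \eqref{stat-emb-plan} matter: it is realised by a continuous \emph{linear} map $\iota$, and its norm constant $c=c(D,\gamma,p,a,s,\tau)$ does not depend on $u$.

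\textbf{Lifting to the vector-valued spaces.} Set $I=[0,T]$, let $X:=\calk^{\gamma}_{p,a}(D)\cap B^{s}_{p,\infty}(D)$ be the intersection space with norm $\|\cdot|X\|:=\|\cdot|\calk^{\gamma}_{p,a}(D)\|+\|\cdot|B^{s}_{p,\infty}(D)\|$, and let $Y:=B^{\gamma}_{\tau,\infty}(D)$, which is merely quasi-Banach when $\tau<1$. Since $X$ and $Y$ both embed continuously into $\mathcal{D}'(D)$, distributional $t$-derivatives of $L_{1}^{\mathrm{loc}}(I,\mathcal{D}'(D))$-functions are unique; hence for $j\le k$ the weak derivative $\partial_{t}^{j}u$ computed in $\calk^{\gamma}_{p,a}(D)$ and the one computed in $B^{s}_{p,\infty}(D)$ coincide, and using the quasi-subadditivity of the $L_{q}$-quasi-norm one gets
\[
W^{k}_{q}(I,\calk^{\gamma}_{p,a}(D))\cap W^{k}_{q}(I,B^{s}_{p,\infty}(D))=W^{k}_{q}(I,X).
\]
Because $\iota\colon X\to Y$ is continuous and linear it commutes with Bochner integration against scalar test functions $\phi\in C_{c}^{\infty}((0,T))$, so $\partial_{t}^{j}(\iota u)=\iota(\partial_{t}^{j}u)$ in $Y$ for all $j\le k$; in particular $u\in W^{k}_{q}(I,X)$ forces $\iota u\in W^{k}_{q}(I,Y)$, which is the claimed inclusion of spaces. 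For the norm estimate, \eqref{stat-emb-plan} gives, for a.e.\ $t\in I$ and every $j\le k$,
\[
\|\partial_{t}^{j}u(t)|B^{\gamma}_{\tau,\infty}(D)\|\le c\bigl(\|\partial_{t}^{j}u(t)|\calk^{\gamma}_{p,a}(D)\|+\|\partial_{t}^{j}u(t)|B^{s}_{p,\infty}(D)\|\bigr);
\]
raising to the power $q$ and integrating over $I$ (or taking the essential supremum over $t\in I$ when $q=\infty$), then summing over $j=0,\dots,k$ and applying the $\ell_{q}$-(quasi-)triangle inequality, one obtains
\[
\|u|W^{k}_{q}(I,B^{\gamma}_{\tau,\infty}(D))\|\le C\bigl(\|u|W^{k}_{q}(I,\calk^{\gamma}_{p,a}(D))\|+\|u|W^{k}_{q}(I,B^{s}_{p,\infty}(D))\|\bigr)
\]
with $C=C(c,k,q)$, which is precisely \eqref{emb-hansen-gen-sob}.

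\textbf{Main obstacle.} The substantive content lies entirely in the stationary embedding \eqref{stat-emb-plan}: translating the controlled blow-up of the Kondratiev derivatives toward the singular set $S$ into summability of Besov coefficients at the DeVore--Triebel borderline $1/\tau=\gamma/3+1/p$, with the edge/vertex alternative encoded by $\delta$ and the side condition $\min(s,a)>\frac{\delta}{3}\gamma$. The time-lifting itself is routine; its only mild subtlety is that $B^{\gamma}_{\tau,\infty}(D)$ is only quasi-Banach for $\tau<1$, so the compatibility of the weak $t$-derivatives has to be phrased through the continuous embedding $\iota$ (and through $\mathcal{D}'(D)$) rather than via any duality or Hahn--Banach argument.
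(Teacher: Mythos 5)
Your argument is correct and coincides with the route the paper itself takes: the paper gives no in-text proof but defers to the stationary embedding of Hansen (\cite[Thm.~1]{Han15}, extended in \cite[Thm.~1.4.12]{co-habil}), and the vector-valued statement is obtained exactly as you do, by applying that spatial embedding fiberwise in $t$ to each derivative $\partial_t^j u(t)$ and integrating the resulting pointwise estimate. Your handling of the only delicate points — uniqueness of the weak time derivatives via the common embedding into $\mathcal{D}'(D)$ and the quasi-Banach nature of $B^{\gamma}_{\tau,\infty}(D)$ for $\tau<1$ — is appropriate, and you correctly isolate the substantive content in the cited stationary result.
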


\section{Parabolic PDEs and operator pencils}  
\label{sect-fund-prob}

In the sequel we deal with two different parabolic settings,  Problems \ref{prob_parab-1a} and \ref{prob_nonlin}, which are of general order and defined on domains of polyhedral type according to Definition \ref{standard}. In particular, Problem \ref{prob_nonlin} is the nonlinear version of Problem \ref{prob_parab-1a} and we investigate the spatial Besov regularity of the solutions of these two problems and to some extent also the  H\"older regularity with respect to the time variable of Problem \ref{prob_parab-1a}. \\

\subsection{The fundamental parabolic problems}

Let  $D$  denote some domain of polyhedral type in $\real^d$ according to Definition \ref{standard} with faces $\Gamma_j$, $j=1,\ldots, n$.  
For $0<T<\infty$ put $D_T=(0,T]\times D$ and 
$ 
\Gamma_{j,T}=[0,T]\times \Gamma_j$.  \\

We will investigate the Besov regularity of the following linear  parabolic problem.

\begin{problems}[{\bf Linear parabolic problem in divergence form}]\label{prob_parab-1a}
Let $m\in \nat$. We consider the following first initial-boundary value problem 

\begin{equation} \label{parab-1a}
\left\{\begin{array}{rl}
\partt u+(-1)^m{L(t,x,D_x)}u\ =\ f \, &  \text{ in } D_T, \\
\frac{\partial^{k-1}u}{\partial \nu^{k-1}}\Big|_{\Gamma_{j,T}}\ =\ 0, & \   k=1,\ldots, m, \ j=1,\ldots, n,\\ 
u\big|_{t=0}\ =\ 0 \, & \text{ in } D.
\end{array} \right\}
\end{equation}
\end{problems}

Here {$f$ is a function given on $D_T$, $\nu$ denotes the exterior normal to $\Gamma_{j,T}$}, and  the partial differential operator $L$ is given by
\[{L(t,x,D_x)}=\sum_{|\alpha|, |\beta|=0}^m D^{\alpha}_x({a_{\alpha \beta}(t,x)}D^{\beta}_x),\]
where $a_{\alpha \beta}$ are bounded real-valued functions from $C^{\infty}(D_T)$ with  $a_{\alpha \beta}=(-1)^{|\alpha|+|\beta|}{a}_{\beta \alpha}$. 
Furthermore, the operator $L$ is assumed to be uniformly elliptic  with respect to $t\in [0,T]$, i.e., 
\begin{equation}\label{operator_L}
\sum_{|\alpha|, |\beta|=m}a_{\alpha \beta}\xi^{\alpha}\xi^{\beta}\geq c|\xi|^{2m} \qquad {\text{for all}}\quad  (t,x)\in D_T, \quad \xi\in \mathbb{R}^d.
\end{equation}

 Let us denote by 
\begin{equation}
B(t,u,v)=\int_D \sum_{|\alpha|, |\beta|=0}^m a_{\alpha\beta}(t,x)(D^{\beta}_xu) (D^{\alpha}_xv)\ud x
\end{equation}
the time-dependent bilinear form.

Moreover, for simplicity we set 
\begin{equation}\label{deriv-B}
B_{\partial_{t^{k}}}(t,u,v)=\sum_{|\alpha|, |\beta|\leq m}\int_D\frac{\partial a_{\alpha\beta}(t,x)}{\partial t^k}(\uD^{\beta}_xu)(t,x)(\uD^{\alpha}_xv)(t,x)\ud x.
\end{equation}

\begin{rem}[{\bf Assumptions on the time-dependent bilinear form}]\label{rem-B-coercive}
When dealing with parabolic problems  it will be reasonable to  suppose that $B(t,\cdot, \cdot)$ satisfies 
\begin{equation}\label{B-coercive}
B(t,u,u)\geq \mu \|u|H^m(D)\|^2
\end{equation}
for all $u\in \mathring{H}^m(D)$ and a.e. $t\in [0,T]$. 
We refer to \cite[Rem.~2.3.5]{co-habil} for a detailed discussion. 
\end{rem}

It is our intention to also  study nonlinear  versions of Problem \ref{prob_parab-1a}. Therefore, we  modify \eqref{parab-1a} as  follows. 

\begin{problems}[{\bf Nonlinear parabolic problem in divergence form}]\label{prob_nonlin}
Let $m,M\in \nat$  and $\varepsilon>0$. We consider the following nonlinear parabolic problem 
\begin{equation} \label{parab-nonlin-1}
\left\{\begin{array}{rl}
\frac{\partial }{\partial t}u+(-1)^mL(t,x,D_x)u +\varepsilon u^{M}\ =\ f \, &  \text{ in } D_T, \\
\frac{\partial^{k-1}u}{\partial \nu^{k-1}}\Big|_{\Gamma_{j,T}}\ =\ 0, & \   k=1,\ldots, m, \ j=1,\ldots, n,\\ 
u\big|_{t=0}\ =\ 0 \, & \text{ in } D. 
\end{array} \right\}
\end{equation}
\end{problems}
The assumptions on $f$ and  the operator $L$ are as in Problem \ref{prob_parab-1a}. When we establish Besov regularity results for Problem \ref{prob_nonlin} we interpret  \eqref{parab-nonlin-1} as a fixed point problem and  show that the regularity estimates for Problem \ref{prob_parab-1a}   carry over to Problem \ref{prob_nonlin}, provided that $\varepsilon$ is sufficiently small.

\subsection{Operator pencils} 
\label{subsect-op-pen}

In order to correctly state the global regularity results in Kondratiev spaces for Problems \ref{prob_parab-1a} and \ref{prob_nonlin},   we need to work with operator pencils generated by  the corresponding elliptic problems in the polyhedral type domain ${D}\subset \real^3$.

We briefly recall the basic facts needed in the sequel. For further information  on this subject we refer to  \cite{KMR01} and  \cite[Sect.~2.3, 3.2., 4.1]{MR10}. On a domain 
 $D\subset\real^3$  of polyhedral type according to Definition \ref{standard} we consider the probelm 
\begin{equation} 
\left\{\begin{array}{rl}\label{prob-ell-00}
Lu\ =\ f \, & \  \text{in} \quad D, \\
\frac{\partial^{k-1}u}{\partial \nu^{k-1}}\Big|_{\partial D}\ =\ 0, & \   k=1,\ldots, m.   
\end{array} \right\} 
\end{equation}

The singular set $S$ of $D$ then is given by the boundary points   $M_1\cup \ldots \cup M_l\cup \{x^{(1)}, \ldots, x^{(l')}\}$. We do not exclude the cases $l=0$ (corner domain) and $l'=0$ (edge domain). In the last case, the set $S$ consists only of smooth non-intersecting  edges. Figure  \ref{corner-edge-dom}  gives examples of polyhedral domains without edges or corners, respectively. \index{domain! corner domain}\index{domain! edge domain}\\  

\begin{figure}[H]
\begin{minipage}{\textwidth}
\begin{center}
\includegraphics[width=8cm]{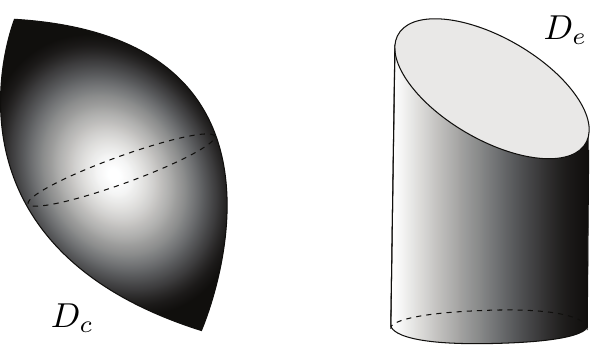}
\caption[Corner and edge domain]{Corner domain $D_c$ ($l=0$) and edge domain $D_e$ ($l'=0$)}
\label{corner-edge-dom}
\end{center}
\end{minipage}\\
\end{figure}

The elliptic boundary value problem \eqref{prob-ell-00} on $D$ generates two types of operator pencils for the edges $M_k$ and for the vertices $x^{(i)}$ of the domain, respectively.

\begin{figure}[H]
\begin{minipage}{0.5\textwidth}
{\bf 1) Operator pencil $A_{\xi}(\lambda)$ for edge points:} \\
The pencils $A_{\xi}(\lambda)$ for  edge points $\xi\in M_k$ are defined as follows: According to Definition \ref{standard} there exists a neighborhood $U_{\xi}$ of $\xi$ and a diffeomorphism $\kappa_{\xi}$  mapping $D\cap U_{\xi}$ onto $\mathcal{D}_{\xi} \cap B_1(0)$, where $\mathcal{D}_{\xi}$ is a  dihedron. \\
Let $\Gamma_{k_{\pm}}$ be the faces adjacent to $M_k$. Then by $\mathcal{D}_{\xi}$ we denote the dihedron which is bounded by the half-planes $\mathring{\Gamma}_{k_{\pm}}$ tangent to $\Gamma_{k_{\pm}}$ at $\xi$ and the edge $M_{\xi}=\mathring{\Gamma}_{k_{+}}\cap \mathring{\Gamma}_{k_{-}}$. Furthermore, let $r,\varphi$ be polar coordinates in the plane perpendicular to $M_{\xi}$ such that 
\[
\mathring{\Gamma}_{k_{\pm}}=\left\{x\in \real^3: \ r>0, \ \varphi=\pm \frac{\theta_{\xi}}{2}\right\}.
\]
\end{minipage}\hfill \begin{minipage}{0.4\textwidth}
\includegraphics[width=6cm]{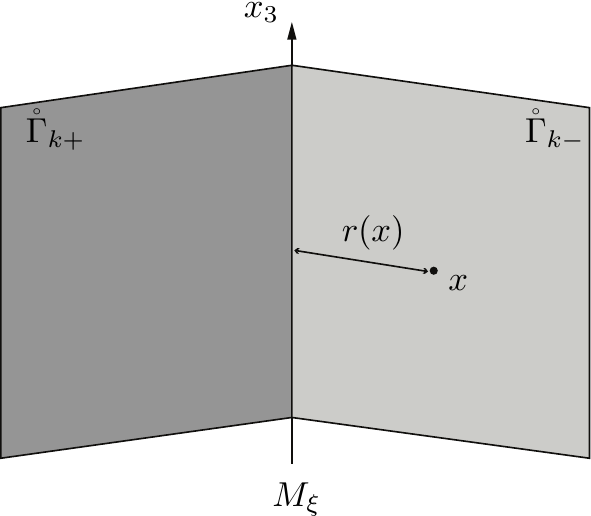}
\caption[Dihedron $\mathcal{D}_{\xi}$]{Dihedron $\mathcal{D}_{\xi}$}
\end{minipage}\\
\end{figure}
We define the {\em operator pencil} $A_{\xi}(\lambda)$ as  follows:   \index{operator pencil! $A_{\xi}(\lambda)$}
\beq\label{op-pencil-1}
A_{\xi}(\lambda)U(\varphi)=r^{2m-\lambda}L_{0}(0,D_x)u, 
\eeq
where $u(x)=r^{\lambda}U(\varphi)$,  $\lambda\in \mathbb{C}$,  {$U$ is a function on $I_{\xi}:=\left(\frac{-\theta_{\xi}}{2}, \frac{\theta_{\xi}}{2}\right)$,}  and 
\[
L_{0}(\xi,D_x)=\sum_{|\alpha|=|\beta|=m}D^{\alpha}_x(a_{\alpha\beta}(\xi)D_x^{\beta}) 
\] 
denotes the main part of the differential operator $L(x,D_x)$ with coefficients frozen at $\xi$. 
This way we obtain in \eqref{op-pencil-1} a boundary value problem for the function $U$ on the 1-dimensional subdomain $I_{\xi}$ with the complex parameter $\lambda$.  Obviously,  $A_{\xi}(\lambda)$  is a polynomial of degree $2m$ in $\lambda$.

The operator $A_{\xi}(\lambda)$ realizes a continuous mapping 
\[
H^{2m}(I_{\xi})\rightarrow L_2(I_{\xi}),
\]
for every $\lambda\in \mathbb{C}$. 
Furthermore, $A_{\xi}(\lambda)$ is an isomorphism for all $\lambda\in \mathbb{C}$ with the possible exception of a denumerable set of isolated points, the {\em spectrum of  $A_{\xi}(\lambda)$}, \index{operator pencil! spectrum} which  consists of its eigenvalues with finite algebraic multiplicities: Here a  complex number $\lambda_0$ is called an {\em eigenvalue of the pencil $A_{\xi}(\lambda)$}\index{operator pencil! eigenvalue} if there exists  a nonzero function $U\in H^{2m}(I_{\xi})$ such that $A_{\xi}(\lambda_0)U=0$.
 It is known that the {\em 'energy line'} $\mathrm{Re}\lambda=m-1$  does not contain eigenvalues of the pencil $A_{\xi}(\lambda)$. We denote by {$\delta_{\pm}^{(\xi)}$} the largest positive real numbers such that the strip 
\beq\label{delta_k_op-1}
m-1-\delta_{-}^{(\xi)}<\mathrm{Re}\lambda<m-1+\delta_{+}^{(\xi)}
\eeq
is free of eigenvalues of the pencil $A_{\xi}(\lambda)$. Furthermore, we put 
\beq\label{delta_k_op}
{\delta_{\pm}^{(k)}}=\inf_{\xi\in M_k}{\delta_{\pm}^{(\xi)}}, \qquad k=1,\ldots, l. 
\eeq 

For example, concerning the Dirichlet problem for the Poisson equation 
on a domain $D\subset \real^3$  of polyhedral type, the eigenvalues of the pencil $A_{\xi}(\lambda)$  are given by 
\[
\lambda_k=k\pi/\theta_{\xi}, \qquad k=\pm1, \pm2, \ldots, 
\]
where   $\theta_{\xi}$ is the inner angle at the edge point $\xi$, cf. \cite[Ex. 2.5.2]{co-habil}. 
Therefore, the first positiv eigenvalue is $\lambda_1=\frac{\pi}{\theta_{\xi}}$ and we obtain $\delta_{\pm}=\frac{\pi}{\theta_{\xi}}$, cf. \cite[Ex. 2.5.1]{co-habil}.  \\

{\bf 2) Operator pencil $\mathfrak{A}_i(\lambda)$ for corner points:} \\ Let $x^{(i)}$ be a vertex of $D$. According to Definition \ref{standard} there exists a neighborhood $U_i$ of $x^{(i)}$ and a diffeomorphism $\kappa_i$  mapping $D\cap U_i$ onto $K_i\cap B_1(0)$ , where 
\[
K_i=\{x\in \real^3: \ x/|x|\in \Omega_i\}
\]
is a polyhedral cone with edges and vertex at the origin. W.l.o.g. we may assume that the Jacobian matrix $\kappa_i'(x)$ is equal to the identity matrix at the point $x^{(i)}$. We introduce spherical coordinates  $\rho=|x|$, $\omega=\frac{x}{|x|}$ in $K_i$ and define the operator pencil \index{operator pencil! $\mathfrak{A}_{i}(\lambda)$}
\begin{equation}\label{op-pencil}
\mathfrak{A}_i(\lambda)U(\omega)=\rho^{2m-\lambda}L_{0}(x^{(i)},D_x)u,
\end{equation}
where $u(x)=\rho^{\lambda}U(\omega)$ and $U\in \mathring{H}^{m}(\Omega_i)$ is a function on $\Omega_i$. An {\em eigenvalue of  $\mathfrak{A}_i(\lambda)$}\index{operator pencil! eigenvalue} is a complex number $\lambda_0$ such that 
$\mathfrak{A}_i(\lambda_0)U=0$ for some nonzero function $U\in\mathring{H}^{m}(\Omega_i)$. 
The operator $\mathfrak{A}_i(\lambda)$ realizes a continuous mapping 
\[
\mathring{H}^{m}(\Omega_i)\rightarrow H^{-m}(\Omega_i).
\]
Furthermore, it is  known that  $\mathfrak{A}_i(\lambda)$ is an isomorphism for all $\lambda\in \mathbb{C}$ with the possible exception of a denumerable set of isolated points. The mentioned enumerable set consists of eigenvalues with finite algebraic multiplicities. 
\begin{figure}[H]
\begin{minipage}{0.4\textwidth}
Moreover, the eigenvalues of $\mathfrak{A}_i(\lambda)$ are situated, except for finitely many, outside a double sector $|\mathrm{Re}\lambda|<\varepsilon |\mathrm{Im}\lambda|$ containing the imaginary axis, cf. \cite[Thm. 10.1.1]{KMR01}. In Figure \ref{eigenvalues-pencil} the situation is illustrated: Outside the yellow area there are only finitely many eigenvalues of the operator pencil $\mathfrak{A}_i(\lambda)$.   \\
Dealing with regularity properties of solutions, we look for the widest strip in the $\lambda$-plane, free of eigenvalues and containing the {\em 'energy line'}  $ \mathrm{Re}\lambda=m-3/2,$ 
cf. Assumption \ref{assumptions}. From what was outlined above, information on the width of this strip is obtained from lower estimates for real parts of the eigenvalues situated over the energy line.   \\
\end{minipage}\hfill \begin{minipage}{0.5\textwidth}
\includegraphics[width=15cm]{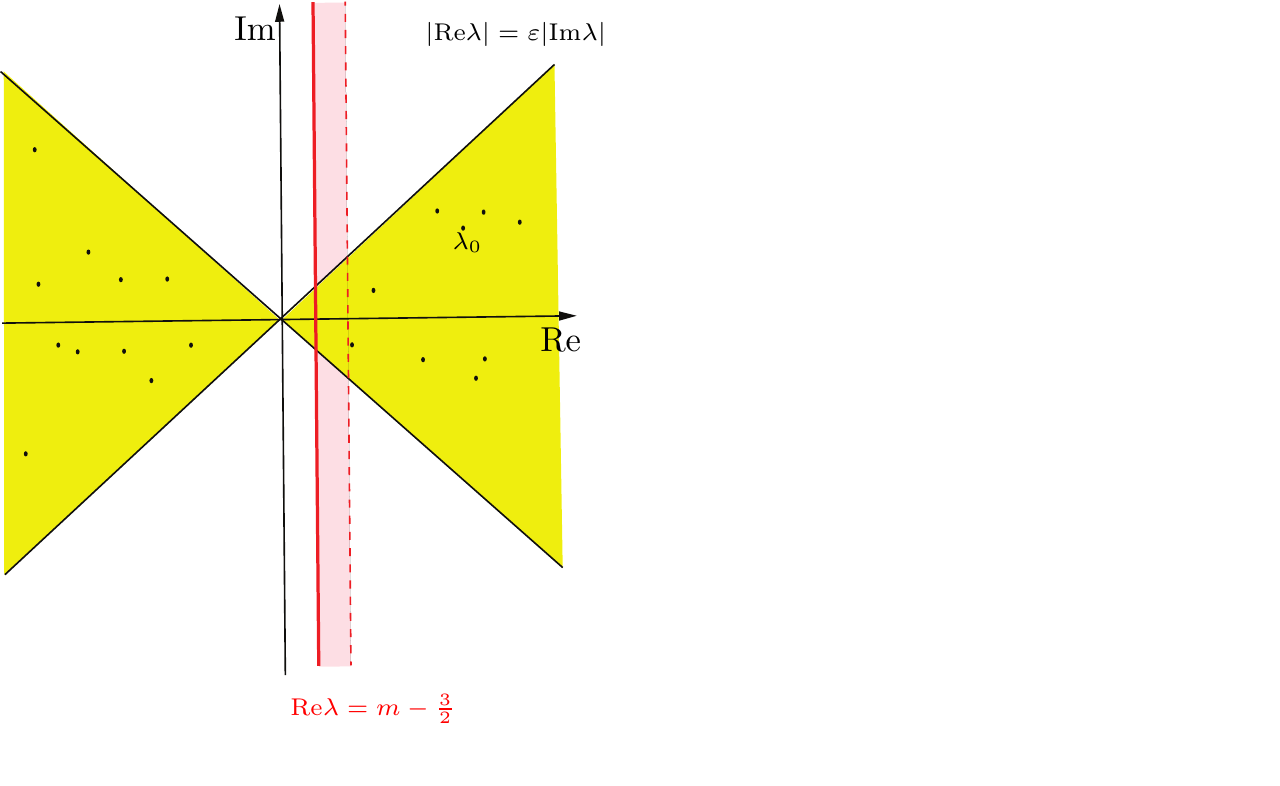}
\caption[Eigenvalues of pencil $\mathfrak{A}_i(\lambda)$]{Eigenvalues of operator pencil $\mathfrak{A}_i(\lambda)$}
\label{eigenvalues-pencil}
\end{minipage}
\end{figure}

\begin{rem}[{\bf Operator pencils for parabolic problems}]  
Since  we study parabolic PDEs, where the differential operator $L(t,x,D_x)$ additionally depends on the time $t$, we have to work with operator pencils $A_{\xi}(\lambda,t)$ and $\mathfrak{A}_{i}(\lambda,t)$ in this context. The philosophy  is to fix $t\in [0,T]$ and define the pencils as above: We replace  \eqref{op-pencil-1} by 
\[
A_{\xi}(\lambda,t)U(\varphi)=r^{2m-\lambda}L_{0}(t,0,D_x)u, 
\]
and work with $\delta^{(\xi)}_{\pm}(t)$ and $\delta_{\pm}^{(k)}(t)=\inf_{\xi\in M_k}{\delta_{\pm}^{(\xi)}}(t)$ in \eqref{delta_k_op-1} and \eqref{delta_k_op}, respectively. Moreover, we put  
\beq\label{delta_k_op_t}
{\delta_{\pm}^{(k)}}=\inf_{t\in [0,T]}{\delta_{\pm}^{(k)}}(t), \qquad k=1,\ldots, l. 
\eeq 
Similar for $\mathfrak{A}_{i}(\lambda,t)$, where now \eqref{op-pencil} is replaced by 
\begin{equation}
\mathfrak{A}_i(\lambda,t)U(\omega)=\rho^{2m-\lambda}L_{0}(t,x^{(i)},D_x)u. 
\end{equation}
\end{rem}

\section{Regularity results in Sobolev and Kondratiev spaces}
\label{sect-reg-sob-kon}

This section presents regularity results for Problems \ref{prob_parab-1a} and \ref{prob_nonlin} in Sobolev and Kondratiev spaces. They will form the basis for obtaining regularity results in Besov  spaces later on via suitable embeddings. 
The results in Sobolev and Kondratiev spaces for Problems \ref{prob_parab-1a} and \ref{prob_nonlin} on domains of polyhedral type  $D\subset \real^d$ are  essentially new and not published elsewhere so far: In \cite{DS19} we restricted our investigations to polyhedral cones $K\subset \real^3$ relying on the results from \cite{LL15}. \\ 
However, the extension of the regularity results for Problem \ref{prob_parab-1a} to polyhedral type domains follows from very similar arguments as in \cite{DS19}, which is why we merely state the results in Sections \ref{subsect-sob-reg} and \ref{subsect-kondr-prob1} and give references for the proofs wherever necessary. In contrast to this the regularity results for the nonlinear Problem  \ref{prob_nonlin} require some careful adaptations and are carried out in detail in Section \ref{subsect-reg-nonlin}.  \\

\subsection{Regularity results in Sobolev  spaces for Problem I}
\label{subsect-sob-reg}

In this subsection we are concerned with the Sobolev  regularity of the weak solution of Problem \ref{prob_parab-1a}. 
We start with the following lemma, whose proof is similar to \cite[Lem.~4.1]{AH08}.  

\begin{lemma}[{\bf Continuity of bilinear form}]\label{lem-cont-F}
Assume that for each $t\in [0,T]$, $F(t,\cdot,\cdot): \mathring{H}^m(D)\times \mathring{H}^m(D)\rightarrow \real$ is a bilinear map satisfying 
\begin{equation}\label{cont-F}
|F(t,u,v)|\leq C \|u| \mathring{H}^m(D)\| \|v| \mathring{H}^m(D)\|
\end{equation}
for all $t\in [0,T]$ and all $u,v\in \mathring{H}^m(D)$, where $C$ is a constant independent of $u,v$, and $t$. Assume further that $F(\cdot, u,v)$ is measurable on $[0,T]$ for each pair $u,v\in \mathring{H}^m(D)$. Assume that $u\in  {H}^{m,1*}(D_T)$ satisfies $u(0)\equiv 0$ and 
\begin{equation}\label{cond-bilinear}
(\partial_t u(t),v)+B(t,u(t),v)=\int_0^t F(\tau,u(\tau),v)\ud \tau
\end{equation} 
for a.e. $t\in [0,T]$ and all $v\in \mathring{H}^m(D)$. Then $u\equiv 0$ on $[0,T]\times D$. 
\end{lemma}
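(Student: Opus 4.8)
The plan is to prove that the unique weak solution of the (homogeneous) problem vanishes, by a Gronwall-type energy argument applied to the identity \eqref{cond-bilinear}. The natural test function is $v=u(t)$, which is admissible since $u(t)\in\mathring H^m(D)$ for a.e.\ $t$. Using that $u\in H^{m,1*}(D_T)$ with $u(0)\equiv 0$, the term $(\partial_t u(t),u(t))$ is, in the sense of the Lions--Magenes duality between $H^{-m}(D)$ and $\mathring H^m(D)$, equal to $\frac12\frac{d}{dt}\|u(t)|L_2(D)\|^2$; this is the standard fact that $t\mapsto\|u(t)|L_2(D)\|^2$ is absolutely continuous for $u\in L_2([0,T],\mathring H^m)\cap H^1([0,T],H^{-m})$. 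Hence, after integrating in time from $0$ to $t$ and using $u(0)\equiv0$, I obtain
\[
\tfrac12\|u(t)|L_2(D)\|^2+\int_0^t B(s,u(s),u(s))\,\ud s=\int_0^t\int_0^s F(\tau,u(\tau),u(s))\,\ud\tau\,\ud s.
\]

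Next I would estimate the two remaining terms. For the left-hand side I invoke the coercivity \eqref{B-coercive} from Remark \ref{rem-B-coercive} (the natural hypothesis here), so that $\int_0^t B(s,u(s),u(s))\,\ud s\ge\mu\int_0^t\|u(s)|H^m(D)\|^2\,\ud s\ge 0$, and this term can simply be dropped (or kept nonnegative). For the right-hand side I use the continuity bound \eqref{cont-F}:
\[
\left|\int_0^t\int_0^s F(\tau,u(\tau),u(s))\,\ud\tau\,\ud s\right|\le C\int_0^t\|u(s)|H^m(D)\|\int_0^s\|u(\tau)|H^m(D)\|\,\ud\tau\,\ud s.
\]
Set $g(s):=\|u(s)|H^m(D)\|$, which lies in $L_2(0,T)$, and $G(s):=\int_0^s g(\tau)\,\ud\tau$, so $G$ is absolutely continuous, nondecreasing, $G(0)=0$, and $G'=g$ a.e. The double integral is $C\int_0^t g(s)G(s)\,\ud s=\tfrac C2\int_0^t\frac{d}{ds}\bigl(G(s)^2\bigr)\,\ud s=\tfrac C2 G(t)^2$. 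Combining,
\[
\tfrac12\|u(t)|L_2(D)\|^2+\mu\int_0^t g(s)^2\,\ud s\le \tfrac C2\Bigl(\int_0^t g(s)\,\ud s\Bigr)^2\le \tfrac C2\, t\int_0^t g(s)^2\,\ud s,
\]
where the last step is Cauchy--Schwarz.

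Finally I would choose $t$ small. For $t\le t_0:=\mu/C$ (more precisely $t<\mu/C$) the right-hand side is strictly dominated by $\mu\int_0^t g(s)^2\,\ud s$, forcing $\int_0^t g(s)^2\,\ud s=0$ and $\|u(t)|L_2(D)\|=0$; hence $u\equiv0$ on $[0,t_0]\times D$. Then a standard bootstrap finishes the argument: on the interval $[t_0,2t_0]$ the function $\tilde u(t):=u(t_0+t)$ again satisfies an identity of the form \eqref{cond-bilinear} with initial value $\tilde u(0)=u(t_0)=0$ (the integral $\int_0^{t_0}F(\tau,u(\tau),v)\ud\tau$ vanishes because $u\equiv0$ there), so the same estimate yields $u\equiv0$ on $[t_0,2t_0]\times D$; iterating finitely many times covers all of $[0,T]$. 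The main obstacle — and the only genuinely delicate point — is the rigorous justification of the integration-by-parts step $(\partial_t u(t),u(t))=\tfrac12\frac{d}{dt}\|u(t)|L_2(D)\|^2$ for functions in the class $H^{m,1*}(D_T)$, i.e.\ with $\partial_t u\in L_2([0,T],H^{-m}(D))$ rather than in $L_2(D_T)$; this is classical (see Lions--Magenes, or the references in \cite{co-habil}) but must be cited or recalled. Everything else is a routine Gronwall/bootstrap computation.
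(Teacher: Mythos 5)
Your proof is correct and follows the standard energy/Gronwall argument that the paper itself defers to (it gives no proof, citing only \cite[Lem.~4.1]{AH08}, whose proof is exactly this: test with $v=u(t)$, use the Lions--Magenes identity $(\partial_t u,u)=\tfrac12\tfrac{d}{dt}\|u|L_2(D)\|^2$, the coercivity \eqref{B-coercive}, and a smallness-of-interval/bootstrap argument). Your explicit appeal to Remark~\ref{rem-B-coercive} for coercivity is consistent with the paper's standing assumptions, and you correctly flag the only delicate point, namely the justification of the chain rule for $u\in L_2([0,T],\mathring{H}^m(D))\cap H^1([0,T],H^{-m}(D))$.
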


Using the spectral method  the following regularity result now follows. 

\begin{theorem}[{\bf Sobolev regularity without time derivatives}]\label{Sob-reg-2} \hfill \\
Let $f\in L_2([0,T],H^{-m}(D))$. 
Then Problem \ref{prob_parab-1a} has a unique weak solution $u$ in the space ${H}^{m,1*}(D_T)$ and the following estimate holds 
\begin{equation}\label{Sob-reg-1}
\|u|H^{m,1*}(D_T)\|\leq C \|f|L_2([0,T],H^{-m}(D))\|,
\end{equation}
where $C$ is a constant independent of $f$ and $u$. 
\end{theorem}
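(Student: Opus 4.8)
The plan is to establish this by the Galerkin (spectral) method, a standard approach for parabolic problems, combined with the continuity Lemma~\ref{lem-cont-F} for uniqueness. I would proceed in four steps: construction of approximate solutions, derivation of a priori estimates, passage to the limit, and finally verification of the initial condition and uniqueness.

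\emph{Step 1: Galerkin approximation.} Since $\mathring{H}^m(D)$ is separable, choose a basis $\{w_j\}_{j\in\nat}$ of $\mathring{H}^m(D)$, orthonormal in $L_2(D)$ (for instance the eigenfunctions of the time-independent part, or simply any convenient Schauder basis). For each $N$, seek $u_N(t)=\sum_{j=1}^N g_j^N(t) w_j$ solving the finite-dimensional system $(\partial_t u_N(t),w_j)+B(t,u_N(t),w_j)=\langle f(t),w_j\rangle$ for $j=1,\dots,N$, with $u_N(0)=0$. Because the coefficients $a_{\alpha\beta}$ are smooth in $t$ and $B$ is bounded and coercive (Remark~\ref{rem-B-coercive}), this is a linear ODE system with $L_\infty$ (indeed continuous) coefficients and $L_2$-in-time right-hand side, so it has a unique absolutely continuous solution $g^N$ on $[0,T]$.

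\emph{Step 2: A priori estimates.} Multiply the $j$-th equation by $g_j^N(t)$ and sum over $j$ to get $\tfrac12\tfrac{d}{dt}\|u_N(t)\|_{L_2}^2 + B(t,u_N(t),u_N(t)) = \langle f(t),u_N(t)\rangle$. Using coercivity $B(t,u_N,u_N)\geq \mu\|u_N\|_{H^m}^2$ on the left and, on the right, $|\langle f,u_N\rangle|\leq \|f\|_{H^{-m}}\|u_N\|_{H^m}\leq \tfrac{\mu}{2}\|u_N\|_{H^m}^2 + \tfrac{1}{2\mu}\|f\|_{H^{-m}}^2$, then integrating in time and using $u_N(0)=0$, I obtain
\begin{equation*}
\sup_{t\in[0,T]}\|u_N(t)\|_{L_2(D)}^2 + \mu\int_0^T\|u_N(t)\|_{H^m(D)}^2\,\ud t \leq \tfrac{1}{\mu}\int_0^T\|f(t)\|_{H^{-m}(D)}^2\,\ud t.
\end{equation*}
This bounds $u_N$ uniformly in $L_\infty([0,T],L_2(D))\cap L_2([0,T],\mathring{H}^m(D))$. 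Next, to bound $\partial_t u_N$ in $L_2([0,T],H^{-m}(D))$: for arbitrary $v\in\mathring{H}^m(D)$ with $\|v\|_{H^m}\le 1$, split $v=v_N+v_N^\perp$ with $v_N$ in the span of $w_1,\dots,w_N$; since the equation tests against $v_N$ and $|(\partial_t u_N,v)|=|(\partial_t u_N,v_N)|=|\langle f,v_N\rangle - B(t,u_N,v_N)|\leq (\|f\|_{H^{-m}}+C\|u_N\|_{H^m})\|v_N\|_{H^m}\leq C(\|f\|_{H^{-m}}+\|u_N\|_{H^m})$ (using $\|v_N\|_{H^m}\le\|v\|_{H^m}$ if the basis is chosen $H^m$-orthogonal, or absorbing a constant otherwise). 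Squaring and integrating gives the desired uniform bound on $\partial_t u_N$, hence $u_N$ is bounded in $H^{m,1*}(D_T)$.

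\emph{Step 3: Limit and verification.} By reflexivity extract a subsequence with $u_N\rightharpoonup u$ in $L_2([0,T],\mathring{H}^m(D))$ and $\partial_t u_N\rightharpoonup \partial_t u$ in $L_2([0,T],H^{-m}(D))$, so $u\in H^{m,1*}(D_T)$ and the norm estimate \eqref{Sob-reg-1} follows from weak lower semicontinuity and the bounds above. Passing to the limit in the Galerkin equations tested against a fixed $w_j$ (using smoothness of $t\mapsto a_{\alpha\beta}(t,\cdot)$ so $B(t,\cdot,w_j)$ is a bounded time-dependent functional), then using density of $\operatorname{span}\{w_j\}$ in $\mathring{H}^m(D)$, shows $u$ satisfies $(\partial_t u(t),v)+B(t,u(t),v)=\langle f(t),v\rangle$ for a.e.\ $t$ and all $v\in\mathring{H}^m(D)$, i.e.\ $u$ is a weak solution. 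The initial condition $u(0)=0$ follows because $u\in H^{m,1*}(D_T)$ embeds into $C([0,T],L_2(D))$ (interpolation), $u_N(0)=0$, and a standard integration-by-parts identity identifies the weak limit of $u_N(0)$.

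\emph{Step 4: Uniqueness.} If $u_1,u_2$ are two weak solutions, their difference $u=u_1-u_2\in H^{m,1*}(D_T)$ satisfies $u(0)=0$ and $(\partial_t u(t),v)+B(t,u(t),v)=0$, which is \eqref{cond-bilinear} with $F\equiv 0$ (trivially satisfying \eqref{cont-F} and measurability). Lemma~\ref{lem-cont-F} then gives $u\equiv 0$.

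\emph{Main obstacle.} The routine parts are the ODE solvability and the energy estimate; the genuinely delicate point is the bound on $\partial_t u_N$ in $L_2([0,T],H^{-m}(D))$, which requires handling the Galerkin projection carefully (either choosing the basis so that the orthogonal projection onto $\operatorname{span}\{w_1,\dots,w_N\}$ is $H^m$-bounded uniformly in $N$, or arguing via the special structure of $H^{m,1*}$), together with a clean justification that the limit $u$ attains the initial value $0$ in the appropriate trace sense. Both are standard but must be done with care for the definition of $H^{m,1*}(D_T)$ used here.
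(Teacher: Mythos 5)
Your proposal is correct and follows essentially the same route as the paper, which explicitly derives this theorem by the spectral (Galerkin) method following \cite[Lem.~4.2]{AH08} and \cite[Sect.~7.1.2]{Eva10}: Galerkin approximation, coercivity-based energy estimates via Remark~\ref{rem-B-coercive}, weak limits, and uniqueness through Lemma~\ref{lem-cont-F} with $F\equiv 0$. The points you flag as delicate (the $H^{-m}$ bound on $\partial_t u_N$ via the Galerkin projection and the trace sense of $u(0)=0$) are handled exactly as in those references, so there is nothing to add.
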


 This proof follows  \cite[Lem.~4.2]{AH08}, which in turn is based on \cite[Sect.~7.1.2]{Eva10}. 

By an application of Theorem \ref{Sob-reg-2} and induction we obtain the following regularity result. The proof is similar to \cite[Thm.~2]{ALL16}.  

\begin{theorem}[{\bf Sobolev regularity with time derivatives}]\label{Sob-reg-3}
Let $l\in \nat_0$ and assume that the right-hand side $f$ of Problem \ref{prob_parab-1a} satisfies 
$$f\in H^l([0,T], H^{-m}(D))\qquad \text{and} \qquad 
\partial_{t^k}f(x,0)=0  \quad  \text{ for } \quad  k=0,\ldots, l-1.$$ 
Then the weak solution $u$ in the space ${H}^{m,1*}(D_T)$ of Problem \ref{prob_parab-1a} in fact belongs to ${H}^{m,l+1*}(D_T)$, i.e., 
has derivatives with respect to $t$ up to order $l$ satisfying 
\[
\partial_{t^k}u\in {H}^{m,1*}(D_T)\quad \text{for}\quad k=0,\ldots, l,
\]
and 
\[
\sum_{k=0}^l\|\partial_{t^k}u|H^{m,1*}(D_T)\| \leq C\sum_{k=0}^l\|\partial_{t^k} f|L_2([0,T],H^{-m}(D))\|, 
\]
where $C$ is a constant independent of $u$ and $f$. 
\end{theorem}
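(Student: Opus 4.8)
The plan is to proceed by induction on $l$, exactly in the spirit of \cite[Thm.~2]{ALL16}, using Theorems \ref{Sob-reg-2} as the base case together with the key observation that formal time-differentiation of \eqref{parab-1a} turns the equation for $v=\partial_t u$ into a parabolic problem of the same type with a modified right-hand side built from $\partial_t f$ and lower-order commutator terms involving the time-derivatives of the coefficients $a_{\alpha\beta}$. The case $l=0$ is precisely Theorem \ref{Sob-reg-2}, so assume the statement holds for $l-1$ and let $f$ satisfy $f\in H^l([0,T],H^{-m}(D))$ with $\partial_{t^k}f(\cdot,0)=0$ for $k=0,\dots,l-1$.

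First I would differentiate the weak formulation of Problem \ref{prob_parab-1a} once in $t$. Writing $B(t,u,v)$ for the time-dependent bilinear form and recalling \eqref{deriv-B} for its $t$-derivatives, the weak formulation reads $(\partial_t u(t),v)+B(t,u(t),v)=(f(t),v)$ for a.e.\ $t$ and all $v\in\mathring{H}^m(D)$. Differentiating in $t$ (rigorously: testing with a difference quotient in $t$ and passing to the limit, or arguing via the equation in $H^{-m}(D)$) gives, at least formally,
\[
(\partial_t v(t),w)+B(t,v(t),w)=(\partial_t f(t),w)-B_{\partial_t}(t,u(t),w),\qquad v:=\partial_t u.
\]
The point is that $\tilde f(t):=\partial_t f(t)-B_{\partial_t}(t,u(t),\cdot)$, viewed as an element of $H^{-m}(D)$, lies in $L_2([0,T],H^{-m}(D))$: the first term does by hypothesis, and the second because $\partial_t a_{\alpha\beta}\in C^\infty(\overline{D_T})$ is bounded and $u\in H^{m,1*}(D_T)\hookrightarrow L_2([0,T],\mathring{H}^m(D))$ (here $v\mapsto B_{\partial_t}(t,u(t),v)$ defines a bounded functional on $\mathring{H}^m(D)$ with norm controlled by $\|u(t)|\mathring H^m(D)\|$). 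One must also check the initial condition: since $\partial_t f(\cdot,0)$ and the coefficients are such that the equation forces $\partial_t u(\cdot,0)=f(\cdot,0)-(-1)^mL(0,\cdot,D_x)u(\cdot,0)$, and $u(\cdot,0)=0$ while $f(\cdot,0)=0$ by the case $k=0$ of the hypothesis, we get $v(0)=\partial_t u(\cdot,0)=0$. Thus $v$ solves (weakly) Problem \ref{prob_parab-1a} with the same operator $L$ and right-hand side $\tilde f\in L_2([0,T],H^{-m}(D))$, zero initial datum. Uniqueness of such a weak solution — which is where Lemma \ref{lem-cont-F} enters, applied to $F=-B_{\partial_t}$ or rather to the difference of two candidate solutions — identifies $v$ with $\partial_t u$ and yields, via Theorem \ref{Sob-reg-2},
\[
\|\partial_t u|H^{m,1*}(D_T)\|\leq C\|\tilde f|L_2([0,T],H^{-m}(D))\|\leq C\bigl(\|\partial_t f|L_2([0,T],H^{-m}(D))\|+\|u|H^{m,1*}(D_T)\|\bigr),
\]
and the last term is controlled by $\|f|L_2([0,T],H^{-m}(D))\|$ via \eqref{Sob-reg-1}.

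Next I would iterate. Having reduced $\partial_t u$ to a solution of Problem \ref{prob_parab-1a} with right-hand side $\tilde f$, I would verify that $\tilde f$ satisfies the hypotheses of the theorem at level $l-1$: namely $\tilde f\in H^{l-1}([0,T],H^{-m}(D))$ (this needs $\partial_{t^j}(B_{\partial_t}(t,u(t),\cdot))\in L_2([0,T],H^{-m}(D))$ for $j\le l-1$, which follows by Leibniz from boundedness of all $\partial_{t^i}a_{\alpha\beta}$ together with the already-established regularity $\partial_{t^i}u\in L_2([0,T],\mathring H^m(D))$ for $i\le l-1$, obtained inductively) and $\partial_{t^k}\tilde f(\cdot,0)=0$ for $k=0,\dots,l-2$ (which follows because each $\partial_{t^k}\tilde f(\cdot,0)$ is a sum of $\partial_{t^{k+1}}f(\cdot,0)$ — zero by hypothesis — and terms containing $\partial_{t^i}u(\cdot,0)$ with $i\le k\le l-2$, which vanish inductively since we have just shown $\partial_{t^i}u(\cdot,0)=0$ for these $i$). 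Applying the induction hypothesis to $\partial_t u$ then gives $\partial_{t^k}(\partial_t u)=\partial_{t^{k+1}}u\in H^{m,1*}(D_T)$ for $k=0,\dots,l-1$, i.e.\ $\partial_{t^k}u\in H^{m,1*}(D_T)$ for $k=1,\dots,l$, with the corresponding norm bound in terms of $\sum_{j\le l-1}\|\partial_{t^j}\tilde f|L_2(H^{-m})\|$. Combining with the base case for $k=0$ and collecting the estimates — unwinding the sums $\|\partial_{t^j}\tilde f|L_2(H^{-m})\|\lesssim \|\partial_{t^{j+1}}f|L_2(H^{-m})\|+\sum_{i\le j}\|\partial_{t^i}u|L_2(\mathring H^m)\|$ and reabsorbing via the lower-order estimates — yields $\sum_{k=0}^l\|\partial_{t^k}u|H^{m,1*}(D_T)\|\le C\sum_{k=0}^l\|\partial_{t^k}f|L_2([0,T],H^{-m}(D))\|$.

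The main obstacle is the rigorous justification of the time-differentiation step and the attendant identification $v=\partial_t u$: a priori one only knows $u\in H^{m,1*}(D_T)=H^0([0,T],\mathring H^m)\cap H^1([0,T],H^{-m})$, so $\partial_t u$ need not be better than $L_2([0,T],H^{-m})$ and one cannot naively differentiate the $\mathring H^m$-valued equation. The clean way around this, following \cite{ALL16}, is \emph{not} to differentiate $u$ but to directly \emph{construct} a weak solution $v$ of the differentiated problem with right-hand side $\tilde f$ (via Theorem \ref{Sob-reg-2}), form $w(t):=\int_0^t v(s)\,\ud s$, and show that $w$ solves the same weak problem as $u$ with right-hand side $f$; then Lemma \ref{lem-cont-F} — which is tailor-made for exactly this "integrated" formulation \eqref{cond-bilinear} — forces $u=w$, whence $\partial_t u=v$ with all the desired regularity. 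Checking that $w$ verifies \eqref{cond-bilinear} for the appropriate $F$, and that the compatibility conditions propagate correctly through the induction, are the points requiring genuine care; everything else is bookkeeping with the Leibniz rule and the boundedness of the coefficients and their $t$-derivatives on $\overline{D_T}$.
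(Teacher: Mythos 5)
Your proposal is correct and follows essentially the same route as the paper, which proves this result by induction on $l$ using Theorem \ref{Sob-reg-2} as the base case and the argument of \cite[Thm.~2]{ALL16}: one constructs a weak solution $v$ of the time-differentiated problem with right-hand side $\partial_t f - B_{\partial_t}(t,u(t),\cdot)$, integrates it in time, and invokes the uniqueness statement of Lemma \ref{lem-cont-F} to identify $\int_0^t v$ with $u$ and hence $v$ with $\partial_t u$. Your handling of the compatibility conditions and of the bookkeeping for the commutator terms matches the intended argument.
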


\begin{rem}
Note that the regularity results for the solution $u$ in  \cite[Thm.~2.1., Lem.~3.1]{LL15} are slightly stronger than the ones obtained in Theorem \ref{Sob-reg-3} above (with the cost of also assuming more regularity on the right-hand side $f$).  
By using similar arguments as in  \cite[Lem.~4.3]{AH08} we are probably able to also show  in our context that Theorem \ref{Sob-reg-2} can be strengthened in the sense  that if  $f\in L_2([0,T], L_2(D))$ then the weak solution $u$ of Problem \ref{prob_parab-1a} belongs in fact  to $L_2([0,T], \mathring{H}^m)\cap H^1([0,T], L_2(D))$. A corresponding generalization of Theorem \ref{Sob-reg-3} should also be possible in the spirit of \cite[Thm.~3.1]{AH08}. However, for our purposes the above results on the Sobolev regularity are sufficient, so these investigations are postponed for the time being. 
\end{rem}

\subsection{Regularity results in Kondratiev spaces for Problem I}
\label{subsect-kondr-prob1}

Concerning weighted Sobolev regularity of  Problem  \ref{prob_parab-1a} first fundamental results on polyhedral cones $K\subset \real^3$ can be found in 
{ \cite[Thms.~3.3,~3.4]{LL15}. In \cite{DS19} we extended and generalized these results, which we now wish to transfer to our setting of polyhedral type domains $D\subset \real^3$.  

For our regularity assertions  we rely on known results for elliptic equations. Therefore, we consider  first 
the following Dirichlet problem for elliptic equations 
\begin{equation}
\left.\begin{cases}Lu=F& \text{on}\quad  {D},\\
\frac{\partial^k u}{\partial \nu^k}\big|_{\Gamma_j}=0, & k=1,\ldots, m, \ j=1,\ldots, n,  
\end{cases}\right\}
\qquad \label{ellipt-pde}\end{equation}
where $D\subset \real^3$ is a domain of polyhedral type according to Definition \ref{standard} with faces $\Gamma_j$. Moreover, we assume that  
\[
L(x,D_x)=\sum_{|\alpha|\leq 2m}A_{\alpha}(x)D^{\alpha}_x
\]
is a uniformly elliptic differential operator of order $2m$ with smooth coefficients $A_{\alpha}$. 
We  need the following technical assumptions in order to state  the Kondratiev regularity of \eqref{ellipt-pde}. 

\begin{assumption}[{\bf Assumptions on operator pencils}]\label{assumptions} 
Consider the operator pencils $\mathfrak{A}_i(\lambda,t)$,   $i=1,\ldots, l'$ for the vertices  and $A_{\xi}(\lambda,t)$ with $\xi\in M_k$, $k=1,\ldots, l$ for the edges of the polyhedral type domain $D\subset \real^3$ introduced in Section \ref{subsect-op-pen}.  For the elliptic problem \eqref{ellipt-pde} we may drop $t$ from the notation of the pencils, otherwise (for our parabolic problems) we assume $t\in[0,T]$ is fixed. \\
Let $\calk^{\gamma}_{p,b}(D)$ and $\calk^{\gamma'}_{p,b'}(D)$ be two Kondratiev spaces, where the singularity set $S$ of $D$ is given by $S=M_1\cup \ldots\cup M_l\cup \{x^{(1)},\ldots, x^{(l')}\}$ and weight parameters $b,b'\in \real$.   Then we  assume that the closed strip between the lines 
\beq\label{op-pen-ass1}
\mathrm{Re}\lambda=b+2m-\frac 32\qquad \text{and}\qquad \mathrm{Re}\lambda=b'+2m-\frac 32
\eeq 
does not contain eigenvalues of    $\mathfrak{A}_i(\lambda,t)$. Moreover, $b$ and $b'$ satisfy
\begin{equation}\label{restr-1-a}
-\delta_-^{(k)}<b+m<\delta_{+}^{(k)}, \qquad -\delta_-^{(k)}<b'+m<\delta_{+}^{(k)}, \quad k=1,\ldots, l, 
\end{equation}
where $\delta_{\pm}^{(k)}$ are defined in \eqref{delta_k_op} (replaced by \eqref{delta_k_op_t} for parabolic problems). 
\end{assumption}

\begin{rem}
 If $l'=0$ we have an edge domain without vertices, cf.   Figure \ref{corner-edge-dom}. In this case condition  \eqref{op-pen-ass1} is empty. Moreover, if $l=0$, we have a corner  domain without edges, in which case condition \eqref{restr-1-a} is empty. 
 For further remarks and explanations  concerning Assumption \ref{assumptions} we refer to \cite[Rem.~3.3]{DS19}.
\end{rem}

\medskip

The following lemma on the regularity of solutions to elliptic boundary value problems in domains of polyhedral type is taken from \cite[Cor.~4.1.10, Thm.~4.1.11]{MR10}. We rewrite it for  our scale of Kondratiev spaces.

\begin{lemma}[{\bf Kondratiev regularity for elliptic PDEs}] \label{mazja_ross} 
Let $D\subset \real^3$ be a domain of polyhedral type. Moreover, let $u\in \mathcal{K}^{\gamma}_{2,a+2m}(D)$ be a solution of \eqref{ellipt-pde}, where 
\[
F\in \mathcal{K}^{\gamma-2m}_{2,a}(D)\cap \mathcal{K}^{\gamma'-2m}_{2,a'}(D), \qquad \gamma\geq m, \quad \gamma'\geq m. 
\] 
{Suppose that $\mathcal{K}^{\gamma}_{2,a}(D)$ and $\mathcal{K}^{\gamma'}_{2,a'}(D)$ satisfy Assumption \ref{assumptions}.} 
 Then $u\in \mathcal{K}^{\gamma'}_{2,a'+2m}(D)$ and 
 \[
 \|u|\mathcal{K}^{\gamma'}_{2,a'+2m}(D)\|\leq C\|F|\mathcal{K}^{\gamma'-2m}_{2,a'}(D)\|, 
 \] 
where $C$ is a constant independent of $u$ and $F$. 
\end{lemma}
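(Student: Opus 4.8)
The statement is essentially a reformulation of the classical elliptic regularity theory of Maz'ya--Rossmann (\cite[Cor.~4.1.10, Thm.~4.1.11]{MR10}) in the specific scale of Kondratiev spaces $\mathcal{K}^{\gamma}_{p,a}$ used throughout this paper, so the plan is to \emph{translate} rather than reprove. The subtle point is purely one of bookkeeping between two conventions: in \cite{MR10} the natural space for the solution is the one in which the $2m$-th derivatives carry weight $\varrho^{|\alpha|-a}$ with $|\alpha|\le\gamma$ and the data $F$ lies in a space where derivatives of order $\le\gamma-2m$ carry weight $\varrho^{|\alpha|-(a-2m)}$; our indexing writes this as $u\in\mathcal{K}^{\gamma}_{2,a+2m}(D)$ and $F\in\mathcal{K}^{\gamma-2m}_{2,a}(D)$. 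So the first step is to write down the precise dictionary between the weighted spaces $V^{l}_{\beta}(D)$ (or $V^{l}_{p,\beta}(D)$) of \cite[Sect.~4.1]{MR10} and our $\mathcal{K}^{\gamma}_{2,a}(D)$, checking that the shift by $2m$ in the weight exponent exactly matches the order of $L$.

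\textbf{Key steps.} First, I would recall the \emph{a priori} estimate and Fredholm/isomorphism statement from \cite[Thm.~4.1.11]{MR10}: under the condition that no eigenvalue of the vertex pencils $\mathfrak{A}_i(\lambda)$ lies on the line $\mathrm{Re}\,\lambda = a' + 2m - \tfrac32$ (energy-line condition), and that the edge pencils have no eigenvalue in the corresponding strip governed by $\delta_\pm^{(k)}$ — which is exactly what Assumption \ref{assumptions} encodes for the pair $(\gamma,a)$, $(\gamma',a')$ — the operator $L$ together with the Dirichlet conditions is an isomorphism between the corresponding weighted spaces, and in particular one has $\|u\|_{\mathcal{K}^{\gamma'}_{2,a'+2m}(D)} \le C\,\|F\|_{\mathcal{K}^{\gamma'-2m}_{2,a'}(D)}$ whenever $u$ is \emph{known} to lie in that space. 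Second, to upgrade the given solution $u\in\mathcal{K}^{\gamma}_{2,a+2m}(D)$ to membership in $\mathcal{K}^{\gamma'}_{2,a'+2m}(D)$, I would invoke the shift/regularity theorem: since $F$ additionally lies in $\mathcal{K}^{\gamma'-2m}_{2,a'}(D)$ and the closed strip between the two energy lines $\mathrm{Re}\,\lambda = b+2m-\tfrac32$ and $\mathrm{Re}\,\lambda=b'+2m-\tfrac32$ (with $b=a$, $b'=a'$) is free of vertex-pencil eigenvalues, Maz'ya--Rossmann's regularity assertion says precisely that the solution inherits the better weighted regularity with no loss. Third, I would note that $\gamma,\gamma'\ge m$ guarantees we are above the energy line $m-\tfrac32$ relevant for the second-order setting, so the hypotheses of \cite[Cor.~4.1.10]{MR10} on the admissible range of weight indices are met. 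Combining the isomorphism estimate with the just-established membership yields the displayed bound.

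\textbf{Main obstacle.} The only real work — and it is more notational than conceptual — is verifying that our Assumption \ref{assumptions}, stated in terms of the quantities $\delta_\pm^{(k)}$ from \eqref{delta_k_op} and the strip \eqref{op-pen-ass1}, coincides exactly with the hypotheses of \cite[Thm.~4.1.11]{MR10} after the index translation $a\leftrightarrow b$, $a+2m\leftrightarrow$ the solution-space weight, and similarly for the primed quantities. In particular one must check that the edge condition \eqref{restr-1-a}, which we wrote as $-\delta_-^{(k)} < b+m < \delta_+^{(k)}$, is the correct recentering of the Maz'ya--Rossmann edge assumption around the energy line $\mathrm{Re}\,\lambda = m-1$ for the dihedral pencils $A_\xi(\lambda)$ (note the $+m$ shift rather than $+2m$, because the edge pencils are normalized relative to a different energy line than the vertex pencils). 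Once this alignment is confirmed, the proof reduces to a direct citation. I would end the proof with a remark that, since nothing in the argument uses $t$, the same statement holds verbatim for the $t$-dependent pencils $\mathfrak{A}_i(\lambda,t)$, $A_\xi(\lambda,t)$ with constants uniform in $t\in[0,T]$ thanks to the infima taken in \eqref{delta_k_op_t}.
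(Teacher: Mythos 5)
Your proposal is correct and matches the paper's treatment: the paper gives no independent proof of Lemma \ref{mazja_ross} but states that it is ``taken from [MR10, Cor.~4.1.10, Thm.~4.1.11]'' and merely rewritten for the scale $\mathcal{K}^{\gamma}_{2,a}(D)$, which is exactly the citation-plus-index-translation argument you outline. The bookkeeping points you flag (the $2m$ shift in the weight between data and solution space, and the recentering of the edge condition around $\mathrm{Re}\,\lambda=m-1$ versus the vertex energy line $m-\tfrac32$) are precisely the content of Assumption \ref{assumptions}, so nothing further is needed.
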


\begin{rem}\label{rem_mazja_ross_ell}
In particular, if in Theorem \ref{Sob-reg-3} we use the stronger assumption  $\partial_{t^k}f(t) \in L_2(D)$ instead of $\partial_{t^k}f(t)\in H^{-m}(D)$   for $k=0,\ldots, l$, then it follows that 
\begin{equation}\label{ell_1}
\partial_{t^k}f(t) \in L_2(D)=\mathcal{K}^0_{2,0}(D) \hookrightarrow \mathcal{K}^{-m}_{2,-m}(D), 
\end{equation}
where the latter embedding follows from the corresponding duality assertion, i.e., we have  
$ 
\mathcal{K}^m_{2,m}(D) \hookrightarrow \mathcal{K}^{0}_{2,0}(D)
$  
since $m\geq 0$. In this case the solution $u$ of  Problem \ref{prob_parab-1a} satisfies 
\begin{equation}\label{ell_2}
\partial_{t^k}u(t)\in \mathring{H}^m(D)\hookrightarrow \mathring{\mathcal{K}}^m_{2,m}(D)\hookrightarrow \mathcal{K}^0_{2,a}(D), \qquad {a\leq m}, 
\end{equation}
where the first embedding is taken from \cite[Lem.~3.1.6]{MR10} and the second embedding for Kondratiev spaces holds whenever 
$m\geq a$. 
{We additionally require in our considerations that 
$
\partial_{t^k}u(t)\in \mathcal{K}^0_{2,a}(D)\hookrightarrow \mathcal{K}^{-m}_{2,-m}(D)$
which holds for $a\geq -m$. 
}
From \eqref{ell_1} and \eqref{ell_2} we see that it is possible to take $\gamma=m$ and $a=-m$ in  Lemma \ref{mazja_ross}, {
i.e., if  $f(t)\in \calk^{-m}_{2,-m}(D)$ then $u(t)\in \calk^m_{2,m}(D)$. {Note that all our arguments with $u(t)$ and $f(t)$, respectively, hold for a.e. $t\in [0,T]$. However, since lower order time derivatives are continuous w.r.t. suitable spaces (but not necessarily the highest one, cf. the proof of Thm. \ref{Hoelder-Besov-reg}), we will suppress this distinction in the sequel.}
}
\end{rem}

Using similar arguments as in \cite[Thm.~3.3]{LL15} we are now able to show the following regularity result in Kondratiev spaces. The proof follows along the same lines as  \cite[Thm.~3.6]{DS19}. \\ 

\begin{theorem}[{\bf Kondratiev regularity A}]\label{thm-weighted-sob-reg}
Let $D\subset \real^3$  be a domain of polyhedral type. Let $\gamma\in \nat $ with  $\gamma\geq 2m$ and put $\gamma_m:=\left[ \frac{\gamma-1}{2m}\right]$. Furthermore, let  $a\in \real$ with   ${a\in [-m,m]}$.  Assume that the right-hand side $f$ of Problem \ref{prob_parab-1a} satisfies 
\begin{itemize}
\item[(i)] $\partial_{t^k} f\in L_2(D_T)\cap L_2([0,T],\mathcal{K}^{2m(\gamma_m-k)}_{2,a+2m(\gamma_m-k)}(D))$, \ $k=0,\ldots, \gamma_m$; \quad 
$\partial_{t^{\gamma_m+1}} f\in L_2(D_T)$. 
\item[(ii)] $\partial_{t^k} f(x,0)=0$, \quad  $k=0,1,\ldots, {\gamma_m}.$
\end{itemize}
{Furthermore, let  Assumption \ref{assumptions}  hold for weight parameters $b=a+2m(\gamma_m-i)$, where $i=0,\ldots, \gamma_m$, and  $b'=-m$.}
Then for the weak solution $u\in {{H}}^{m,\gamma_m+2\ast}(D_T)$ of Problem \ref{prob_parab-1a} we have 
$$\partial_{t^{l+1}} u\in L_2([0,T],\mathcal{K}^{2m(\gamma_m-l)}_{2,a+2m(\gamma_m-l)}(D))$$ for $l=-1,0,\ldots, \gamma_m$. In particular, for the derivatives $\partial_{t^{l+1}} u$ up to order $\gamma_m+1$ we have the a priori estimate 
\begin{align}
\sum_{l=-1}^{\gamma_m}& \|\partial_{t^{l+1}} u|{L_2([0,T],\mathcal{K}^{2m(\gamma_m-l)}_{2,a+2m(\gamma_m-l)}(D))}\|\notag\\
&\lesssim  \sum_{k=0}^{\gamma_m}\|\partial_{t^k} f|{L_2([0,T], \mathcal{K}^{2m(\gamma_m-k)}_{2,a+2m(\gamma_m-k)}(D))}\|+\sum_{k=0}^{\gamma_m+1}\|\partial_{t^k} f|{L_2(D_T)}\|,\label{weighted-sobolev-est}
\end{align}
where the  constant is  independent of $u$ and $f$. 
\end{theorem}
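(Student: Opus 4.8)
The plan is to combine the purely temporal Sobolev regularity already obtained in Theorem~\ref{Sob-reg-3} with the elliptic Kondratiev regularity of Lemma~\ref{mazja_ross}, applied for each fixed $t\in[0,T]$, and then to bootstrap in the time variable. First I would invoke Theorem~\ref{Sob-reg-3} with $l=\gamma_m+1$: since by hypothesis (i) we have $\partial_{t^k}f\in L_2(D_T)=L_2([0,T],L_2(D))\hookrightarrow H^l([0,T],H^{-m}(D))$ for the relevant range and (ii) gives the vanishing of the time derivatives at $t=0$, it follows that the weak solution $u$ lies in $H^{m,\gamma_m+2*}(D_T)$ and that $\partial_{t^k}u\in H^{m,1*}(D_T)$ for $k=0,\ldots,\gamma_m+1$, with the corresponding a~priori bound in terms of $\sum_k\|\partial_{t^k}f|L_2(D_T)\|$. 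This already settles the case $l=-1$ in the asserted conclusion (with $\gamma_m-l=\gamma_m+1$), modulo reading off $\mathring H^m(D)\hookrightarrow\mathcal{K}^m_{2,m}(D)\hookrightarrow\mathcal{K}^0_{2,a}(D)$ for $a\leq m$ as in Remark~\ref{rem_mazja_ross_ell}; note $\mathcal{K}^{2m(\gamma_m-l)}_{2,a+2m(\gamma_m-l)}$ with $l=-1$ reads $\mathcal{K}^{2m(\gamma_m+1)}_{2,a+2m(\gamma_m+1)}$, which is not what I want, so in fact the $l=-1$ case should be interpreted as the one carrying the least spatial smoothness — I would double-check the index matching here, since the natural reading is that the highest time derivative has the \emph{lowest} spatial Kondratiev regularity.

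\textbf{The induction on $t$-derivatives.} The core of the argument is a downward induction on $l$ from $l=\gamma_m$ down to $l=0$ (the base case $l=-1$ being the plain Sobolev statement just recorded). Fix $t\in[0,T]$. Differentiating the PDE $\partt u+(-1)^mLu=f$ formally $k$ times in $t$ gives, by the Leibniz rule,
\[
(-1)^mL(t,x,D_x)\,\partial_{t^k}u=\partial_{t^k}f-\partial_{t^{k+1}}u-\sum_{j=1}^{k}\binom{k}{j}(\partial_{t^j}L)\,\partial_{t^{k-j}}u,
\]
where $\partial_{t^j}L$ is the operator with coefficients $\partial_{t^j}a_{\alpha\beta}$, which are again smooth and bounded since $a_{\alpha\beta}\in C^\infty(D_T)$. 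The idea is to read this as an elliptic equation for $w:=\partial_{t^k}u$ with right-hand side $F_k$ equal to the above sum. Suppose inductively that $\partial_{t^{k+1}}u(t)\in\mathcal{K}^{2m(\gamma_m-k)-2m}_{2,\dots}(D)$-type regularity and that the lower-order terms $\partial_{t^{k-j}}u(t)$, $j\geq1$, already enjoy regularity at least one step better; combined with $\partial_{t^k}f(t)\in\mathcal{K}^{2m(\gamma_m-k)}_{2,a+2m(\gamma_m-k)}(D)\hookrightarrow\mathcal{K}^{\gamma-2m}_{2,a}(D)$-type membership from hypothesis~(i), one gets $F_k\in\mathcal{K}^{\gamma-2m}_{2,a}(D)$ for the appropriate $(\gamma,a)$. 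Then Lemma~\ref{mazja_ross}, whose pencil hypotheses are exactly the Assumption~\ref{assumptions} imposed in the statement with $b=a+2m(\gamma_m-i)$ and $b'=-m$, upgrades $\partial_{t^k}u(t)$ to $\mathcal{K}^{2m(\gamma_m-k)}_{2,a+2m(\gamma_m-k)}(D)$ with the quantitative estimate. Integrating the squares over $t\in[0,T]$ and using the Sobolev-in-time bounds from the first step to control the $L_2([0,T],\cdot)$ norms of all the pieces yields \eqref{weighted-sobolev-est}. The smoothness of $a_{\alpha\beta}$ guarantees that multiplication by $\partial_{t^j}a_{\alpha\beta}$ maps Kondratiev spaces to themselves (this is where one cites the pointwise multiplication statements, e.g.\ Corollary~\ref{thm-pointwise-mult-2}, or simply the boundedness of smooth multipliers on $\mathcal{K}^\gamma_{2,a}$).

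\textbf{Main obstacle.} The delicate point is keeping the two index families — the differentiation order $k$ and the Kondratiev smoothness/weight pair $(2m(\gamma_m-k),\,a+2m(\gamma_m-k))$ — consistently aligned so that at every induction step the right-hand side $F_k$ genuinely lands in the space $\mathcal{K}^{\gamma-2m}_{2,a}(D)$ required by Lemma~\ref{mazja_ross} for that step's target regularity, and so that the pencil-strip hypothesis is the one actually assumed (the choice $b=a+2m(\gamma_m-i)$ for $i=0,\dots,\gamma_m$ and $b'=-m$ is precisely tuned to make each of the $\gamma_m+1$ applications of the lemma legitimate). One must also verify that the lower-order term $\partial_{t^{k-j}}u$, which the induction controls in a space with \emph{more} derivatives than needed, indeed embeds into the requisite space — this uses the monotonicity \eqref{kondratiev-emb} — and that the worst term, $\partial_{t^{k+1}}u$, has exactly the regularity the previous induction step delivered. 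Handling the endpoint $a=-m$ versus $a=m$ and checking that the embeddings $\mathring H^m\hookrightarrow\mathcal{K}^m_{2,m}\hookrightarrow\mathcal{K}^0_{2,a}\hookrightarrow\mathcal{K}^{-m}_{2,-m}$ of Remark~\ref{rem_mazja_ross_ell} are available throughout is the other place where care is needed. Everything else is a bookkeeping of constants, and the argument is in any case a direct transcription of \cite[Thm.~3.6]{DS19} with polyhedral cones replaced by polyhedral type domains, the only structural input being that Lemma~\ref{mazja_ross} (from \cite{MR10}) holds in this more general geometry.
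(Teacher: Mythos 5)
Your overall architecture --- Theorem \ref{Sob-reg-3} with $l=\gamma_m+1$ for the temporal Sobolev regularity, then time-differentiation of the equation and an application of the elliptic shift Lemma \ref{mazja_ross} for a.e.\ fixed $t$, with Assumption \ref{assumptions} feeding the pencil hypotheses --- is exactly the route the paper intends (it gives no proof of its own but defers to \cite[Thm.~3.6]{DS19}, whose argument is the one you describe). However, two points in your write-up need repair. First, you have the index orientation backwards: $l=-1$ gives $\partial_{t^0}u=u\in\mathcal{K}^{2m(\gamma_m+1)}_{2,a+2m(\gamma_m+1)}(D)$, the \emph{highest} spatial regularity, while the case that follows directly from the Sobolev statement via $\mathring H^m(D)\hookrightarrow\mathcal{K}^m_{2,m}(D)\hookrightarrow\mathcal{K}^0_{2,a}(D)$ is $l=\gamma_m$, i.e.\ $\partial_{t^{\gamma_m+1}}u\in L_2([0,T],\mathcal{K}^0_{2,a}(D))$. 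That, not $l=-1$, is the base case; the theorem's indexing is consistent (fewer time derivatives, more spatial smoothness) and should not be reinterpreted.

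Second, and more substantively, the induction as you organize it does not close. In the elliptic equation for $w=\partial_{t^k}u$ the right-hand side contains, besides $\partial_{t^k}f$ and $\partial_{t^{k+1}}u$, the commutator terms $(\partial_{t^j}L)\partial_{t^{k-j}}u$ with $j\ge 1$. A downward induction on the number of time derivatives has, at stage $k$, already established the claim for $k+1,\dots,\gamma_m+1$ derivatives but \emph{not} for $k-j<k$ derivatives, so your hypothesis that the lower-order terms ``already enjoy regularity at least one step better'' is not available at that moment. The standard fix (and what \cite{LL15,DS19} actually do) is to induct on the regularity level instead: in each sweep one raises the Kondratiev smoothness of \emph{all} the derivatives $\partial_{t^{k}}u$, $k=0,\dots,\gamma_m+1$, by $2m$ simultaneously, running $k$ from $\gamma_m+1$ down to $0$ within a sweep and using the previous sweep's (uniform, lower) regularity for the terms $\partial_{t^{k-j}}u$ together with the monotonicity \eqref{kondratiev-emb}, the order-$2m$ mapping property of $\partial_{t^j}L$ on Kondratiev spaces, and the fact that the terms with $j\ge1$ land in a space with $2m j$ more derivatives than needed. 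With that reorganization your estimate chain, the role of Assumption \ref{assumptions} with $b=a+2m(\gamma_m-i)$, $b'=-m$, and the final integration over $t$ all go through as you describe.
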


\remark{\label{rem-restr}
 The existence of the solution $u\in {H}^{m,{\gamma_m+2}\ast}(D_T)$  follows from  Theorem \ref{Sob-reg-3} {using  $l=\gamma_m+1$}.
}

The regularity results obtained in Theorem \ref{thm-weighted-sob-reg} only hold under certain restrictions on the parameter $a$ we are able to choose. In particular, we cannot choose $\gamma_m>0$ if we have a non-convex polyhedral type domains $D$, since there is no suitable $a$ satisfying all of our requirements in this case. In order to treat non-convex domains as well, we impose stronger assumptions on the right--hand side $f$, requiring that it is  arbitrarily smooth w.r.t. the time. This additional assumption allows for a larger range of  $a$. However, as a drawback, these results are hard to apply to nonlinear equations since the right-hand sides are not taken from a Banach  or quasi-Banach space. The proof of the following theorem is similar to  \cite[Thm.~3.9]{DS19} adapted to our setting.\\

\begin{theorem}[{\bf Kondratiev regularity B}]\label{thm-weighted-sob-reg-2}
Let $D\subset \real^3$ be a domain of polyhedral type and  $\eta\in \nat $ with  $\eta\geq 2m$. Moreover, let  $l\in \nat_0$ and $a\in \real$ with   $a\in [-m,m]$.  Assume that the right-hand side $f$ of Problem \ref{prob_parab-1a} satisfies 
\begin{itemize}
\item[(i)] $ f\in \bigcap_{l=0}^{\infty}H^l([0,T], L_2(D)\cap \mathcal{K}^{\eta-2m}_{2,a}(D))$. 
\item[(ii)] $\partial_{t^l} f(x,0)=0$, \quad  $l\in \nat_0.$
\end{itemize}
Furthermore, let  Assumption \ref{assumptions}  hold for weight parameters $b=a$ and  $b'=-m$. 
Then for the weak solution $u\in  \bigcap_{l=0}^{\infty}{{H}}^{m,l+1\ast}(D_T)$ of Problem \ref{prob_parab-1a} we have 
$$\partial_{t^{l}} u\in L_2([0,T],\mathcal{K}^{\eta}_{2,a+2m}(D))\qquad \text{for all}\quad l\in \nat_0. $$
In particular, for the derivative $\partial_{t^l} u$  we have the a priori estimate 
\begin{align}
\sum_{k=0}^{l}& \|\partial_{t^{k}} u|{L_2([0,T],\mathcal{K}^{\eta}_{2,a+2m}(D))}\|\notag\\
&
\lesssim  \sum_{k=0}^{l+(\eta-2m)}\|\partial_{t^k} f|{L_2([0,T], \mathcal{K}^{\eta-2m}_{2,a}(D))}\|+\sum_{k=0}^{l+1+(\eta-2m)}\|\partial_{t^k} f|{L_2(D_T)}\|, \notag 
\end{align}
where the  constant is  independent of $u$ and $f$. 
\end{theorem}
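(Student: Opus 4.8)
The plan is to mirror the strategy of Theorem~\ref{thm-weighted-sob-reg} but to exploit the stronger time-regularity of $f$ so as to bootstrap in the spatial weight index without ever leaving the admissible range $a\in[-m,m]$. First I would differentiate the equation in Problem~\ref{prob_parab-1a} formally $k$ times with respect to $t$, obtaining for $v:=\partial_{t^k}u$ an elliptic-in-space identity of the form $Lv = \partial_{t^k}f - \partial_t v - \sum_{j<k}\binom{k}{j}B_{\partial_{t^{k-j}}}(\cdot,\partial_{t^j}u,\cdot)$ (in the weak/distributional sense dual to $\mathring H^m$), where the correction terms collect the time-derivatives of the coefficients $a_{\alpha\beta}$ via \eqref{deriv-B}. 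By Theorem~\ref{Sob-reg-3}, applied with $l$ large (which is legitimate since (i) gives $f\in H^l([0,T],L_2(D))$ for every $l$ and (ii) gives the vanishing of all time-traces at $t=0$), each $\partial_{t^k}u$ already lies in $H^{m,1*}(D_T)$, so in particular $\partial_{t^k}u(t)\in\mathring H^m(D)\hookrightarrow\mathcal K^m_{2,m}(D)$ for a.e.\ $t$, which by Remark~\ref{rem_mazja_ross_ell} is the base case $\gamma=m$, $a=-m$ of the elliptic lemma.

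Next I would run the spatial bootstrap. Fix $t$ and apply Lemma~\ref{mazja_ross} to the elliptic identity above: starting from $\partial_{t^k}u(t)\in\mathcal K^m_{2,m}(D)$ and the right-hand side in $\mathcal K^{\eta-2m}_{2,a}(D)\cap\mathcal K^{-m}_{2,-m}(D)$, one upgrades to $\partial_{t^k}u(t)\in\mathcal K^{\min(2m,\eta)}_{2,a+2m}(D)$, then feed this back (the correction terms $B_{\partial_{t^{\bullet}}}$ and the lower-order time derivatives now sit in a higher Kondratiev space by the already-established estimates for smaller $k$, using the smoothness of the coefficients and the monotonicity \eqref{kondratiev-emb}), and iterate $\gamma_\eta:=\lceil(\eta-2m)/(2m)\rceil$ times until the full regularity $\mathcal K^{\eta}_{2,a+2m}(D)$ is reached. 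At each stage Assumption~\ref{assumptions} with $b=a$, $b'=-m$ guarantees there are no pencil eigenvalues in the relevant strip, so Lemma~\ref{mazja_ross} indeed applies; the edge condition \eqref{restr-1-a} for $a\in[-m,m]$ and the corner condition \eqref{op-pen-ass1} between $\mathrm{Re}\lambda=a+2m-\tfrac32$ and $\mathrm{Re}\lambda=-m+2m-\tfrac32=m-\tfrac32$ are exactly what is needed. The number of extra time derivatives consumed, $\eta-2m$ per spatial upgrade, accounts precisely for the indices $l+(\eta-2m)$ and $l+1+(\eta-2m)$ appearing in the a priori estimate.

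To obtain the stated estimate I would integrate the pointwise-in-$t$ elliptic bounds over $[0,T]$: Lemma~\ref{mazja_ross} gives $\|\partial_{t^k}u(t)|\mathcal K^{\eta}_{2,a+2m}(D)\|\lesssim\|(\text{RHS at time }t)|\mathcal K^{\eta-2m}_{2,a}(D)\|$, and the right-hand side is controlled by $\|\partial_{t^k}f(t)\|$ plus terms $\|\partial_t\partial_{t^k}u(t)\|$ and $\|\partial_{t^j}u(t)\|_{\mathcal K}$ with $j\le k$; the first of these is $\|\partial_{t^{k+1}}u(t)\|$ in a weaker space, absorbed by Theorems~\ref{Sob-reg-2}/\ref{Sob-reg-3}, and the latter are handled inductively on $k$. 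Taking $L_2$-norms in $t$ and summing over $k=0,\dots,l$, with constants depending on $\sup_{t}\|a_{\alpha\beta}(t,\cdot)|C^{\text{enough}}\|$, produces the claimed inequality. The main obstacle I anticipate is bookkeeping the interplay between the time-differentiation (which generates the coefficient-derivative terms $B_{\partial_{t^\bullet}}$) and the spatial bootstrap: one must verify that at every step these correction terms already live in a Kondratiev space good enough to serve as a valid right-hand side for the next application of Lemma~\ref{mazja_ross}, which requires the pointwise-multiplier/algebra properties of Corollary~\ref{thm-pointwise-mult-2} together with the smoothness of the $a_{\alpha\beta}$, and a careful induction ordered simultaneously in the time-derivative order $k$ and in the spatial iteration count. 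This is the part where the analogy with \cite[Thm.~3.9]{DS19} must be adapted with care to the polyhedral-type geometry, but no genuinely new idea beyond that reference is needed.
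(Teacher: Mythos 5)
Your proposal is correct and follows essentially the route the paper intends: the paper gives no proof of its own but defers to \cite[Thm.~3.9]{DS19}, whose mechanism is exactly what you describe — differentiate in time, use Lemma \ref{mazja_ross} with the weight held fixed at $a$ (so that Assumption \ref{assumptions} is only needed for the single pair $b=a$, $b'=-m$), and run a double induction over the spatial smoothness level and the time-derivative order, paying one extra time derivative of $f$ per spatial upgrade because of the $\partial_{t^{k+1}}u$ term on the right-hand side. The only quibble is bookkeeping: if each bootstrap step raises the smoothness by $2m$ you consume $\lceil(\eta-2m)/(2m)\rceil$ extra time derivatives in total (not $\eta-2m$ per upgrade), which is still dominated by the $l+(\eta-2m)$ terms appearing in the stated a priori estimate, so nothing breaks.
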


\remark{In Theorem \ref{thm-weighted-sob-reg-2} compared to Theorem \ref{thm-weighted-sob-reg} we only require the parameter $a$ to satisfy $a\in [-m,m]$ and $-\delta_{-}^{(k)}<a+m<\delta_{+}^{(k)}$ independent of the regularity parameter {$\eta$} which can be arbitrarily high. In particular, for the heat equation on 
a  domain of polyhedral type $D$ (which for simplicity we assume to be  a polyhedron with straight edges and faces where   $\theta_k$ denotes the angle at the edge $M_k$), we have $\delta_{\pm}^{(k)}=\frac{\pi}{\theta_k}$, which leads to the restriction 
$ 
-1\leq  a<\min\left(1, \frac{\pi}{\theta_k}-1\right).  
$ 
Therefore,  even in the extremal case when $\theta_k=2\pi$ we can still take $-1\leq a<-\frac 12$ (resulting in $u\in L_2([0,T], \calk^{\eta}_{a+2}(D))$ being locally integrable since $a+2>0$). Then choosing  {$\eta$} arbitrary high,  we  also cover non-convex polyhedral type domains with our results from Theorem \ref{thm-weighted-sob-reg-2}. 
}

\subsection{Regularity results in Sobolev and Kondratiev spaces for Problem II}
\label{subsect-reg-nonlin}

In this subsection we show that the regularity estimates in {Kondratiev and Sobolev spaces  as stated in Theorems \ref{thm-weighted-sob-reg} and  \ref{Sob-reg-3}}, respectively,  carry over to Problem \ref{prob_nonlin}, provided that $\varepsilon$ is sufficiently small. In order to do this we interpret Problem \ref{prob_nonlin} as a fixed point problem in the following way. 

 Let $\widetilde{\mathcal{D}}$ and $S$ be Banach spaces ({$\widetilde{\mathcal{D}}$ and $S$ will be specified in the theorem below}) and let $\tilde{L}^{-1}:\widetilde{\mathcal{D}}\rightarrow S$ be the linear operator defined  via
\begin{equation} \label{tildeL}
\tilde{L}u:=\frac{\partial}{\partial t}u+(-1)^mLu. 
 \end{equation}
 Problem \ref{prob_nonlin}  is equivalent to 
\[
\tilde{L}u=f-\varepsilon u^{M}=:Nu,
\] 
where $N:S\rightarrow \widetilde{\mathcal{D}}$ is a nonlinear operator. If we can show that $N$ maps $S$ into $\widetilde{\mathcal{D}}$, then a solution of Problem \ref{prob_nonlin} is a fixed point of the problem 
\[
(\tilde{L}^{-1}\circ N)u=u.
\]
Our aim is  to apply Banach's fixed point theorem, \index{Fixed point theorem! Banach} which will also guarantee uniqueness of the solution, if we can show that  $T:=(\tilde{L}^{-1}\circ N): S_0\rightarrow S_0$ is a contraction mapping, i.e., there exists some $q\in [0,1)$ such that 
\[
\|T(x)-T(y)|S\|\leq q\|x-y|S\| \quad \text{for all}\quad x,y\in S_0, 
\]
where the corresponding subset $S_0\subset S$ is a small {closed} ball with center $\tilde{L}^{-1}f$  (the solution of the corresponding linear problem) and suitably chosen radius $R>0$. \\

Our main result is stated in the theorem below. 

\begin{theorem}[{\bf Nonlinear Sobolev and Kondratiev regularity}]\label{nonlin-B-reg1}
Let $\tilde{L}$ and $N$ be as described above. Assume the assumptions of Theorem \ref{thm-weighted-sob-reg} are satisfied and, additionally, we have  {$\gamma_m\geq 1$}, $m\geq  2$,  and {$a\geq -\frac 12$}.  
Let 
\[
\D_1:=\bigcap_{k=0}^{\gamma_m}H^{k}([0,T],\mathcal{K}^{2m(\gamma_m-k)}_{2,a+2m(\gamma_m-k)}(D)), \quad 
\D_2:=H^{\gamma_m+1}([0,T],L_2(D))
\]
and consider the data space 
\begin{align*}
\widetilde{\mathcal{D}}&:=\{f\in \D_1 
\cap \D_2: \  
\partial_{t^k} f(0,\cdot)=0, \quad k=0,\ldots, \gamma_m\}.   
\end{align*}
Moreover, let 
\begin{align*}
S_1&:= \bigcap_{k=0}^{\gamma_m+1}H^{k}([0,T],\mathcal{K}^{2m(\gamma_m-(k-1))}_{2,a+2m(\gamma_m-(k-1))}(D)), \qquad
S_2:={H}^{m,\gamma_m+2\ast}(D_T), 
\end{align*}
and  consider the solution space 
$S:=S_1\cap S_2$. Suppose that $f\in \widetilde{\mathcal{D}}$
and put  $\eta:=\|f|\widetilde{\mathcal{D}}\|$ and $r_0>1$. Moreover, we choose $\varepsilon >0$ so small that 
\[
{
\eta^{2(M-1)} \|\tilde{L}^{-1}\|^{2M-1}\leq \frac{1}{{c}\varepsilon M}(r_0-1)\left(\frac{1}{r_0}\right)^{2M-1}, \qquad \text{if}\quad  r_0\|\tilde{L}^{-1}\|\eta>1,
}
\]
and 
\[\|\tilde{L}^{-1}\|<\frac{r_0-1}{r_0}\left(\frac{1}{{c}\varepsilon M}\right), \qquad \text{if}\quad  r_0\|\tilde{L}^{-1}\|\eta<1,\]
{where $c>0$ denotes the  constant in  \eqref{est-ab} resulting from our estimates below. }
Then there exists a unique  solution $u\in S_0\subset S$ of Problem \ref{prob_nonlin}, where $S_0$ denotes a small ball  around $\tilde{L}^{-1}f$ (the solution of the corresponding linear problem) with radius $R=(r_0-1)\eta \|\tilde{L}^{-1}\|$. 
\end{theorem}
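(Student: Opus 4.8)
The plan is to verify the hypotheses of Banach's fixed point theorem for the map $T=\tilde{L}^{-1}\circ N$ on the closed ball $S_0=\overline{B}_R(\tilde{L}^{-1}f)\subset S$, with $R=(r_0-1)\eta\|\tilde{L}^{-1}\|$. First I would record that by Theorem \ref{thm-weighted-sob-reg} the linear operator $\tilde{L}$ is an isomorphism from $S$ onto $\widetilde{\mathcal{D}}$ (the a priori estimate \eqref{weighted-sobolev-est} gives boundedness of $\tilde{L}^{-1}$, and existence/uniqueness of the weak solution gives bijectivity), so $\tilde{L}^{-1}f$ is well defined and lies in $S$. The two genuinely substantive points are: (a) the Nemytskij-type map $u\mapsto u^M$ sends $S$ into $\widetilde{\mathcal{D}}$, i.e. $N$ is well defined; and (b) $T$ maps $S_0$ into itself and is a contraction there. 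Both of these reduce to a single multiplicative estimate of the form $\|u^M|\widetilde{\mathcal{D}}\|\le c\,\|u|S\|^M$ and its "Lipschitz" companion $\|u^M-v^M|\widetilde{\mathcal{D}}\|\le c\,M\,(\max(\|u|S\|,\|v|S\|))^{M-1}\|u-v|S\|$, which is \eqref{est-ab} referenced in the statement.

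The heart of the proof is therefore establishing that pointwise-in-time products behave well. For the $\D_1$-component of $\widetilde{\mathcal{D}}$ one must bound, for each $k\le\gamma_m$, the norm $\|\partial_{t^k}(u^M)|L_2([0,T],\calk^{2m(\gamma_m-k)}_{2,a+2m(\gamma_m-k)}(D))\|$. Applying the Leibniz rule in $t$ distributes $k$ time-derivatives among the $M$ factors, and then for fixed $t$ one needs a product estimate in Kondratiev spaces: here I would invoke Corollary \ref{thm-pointwise-mult-2}. The hypotheses $m\ge 2$, $\gamma_m\ge 1$ (so that $2m\gamma_m\ge 4>3/p$ with $p=2$), and $a\ge-\tfrac12$ (ensuring $a+2m(\gamma_m-k)\ge 3/p=3/2$ in the relevant ranges, using $a\ge-\tfrac12$ and $2m\ge 4$) are precisely what is needed so that the spaces $\calk^{2m(\gamma_m-k)}_{2,a+2m(\gamma_m-k)}(D)$ are multiplication algebras, or so that part (ii) of the Corollary applies to absorb lower-order factors; one then iterates to handle the $M$-fold product. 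The $\D_2$-component $H^{\gamma_m+1}([0,T],L_2(D))$ requires controlling $\partial_{t^{\gamma_m+1}}(u^M)$ in $L_2(D_T)$, which after Leibniz again needs one high-order time-derivative of a single factor multiplied by lower-order factors; here one uses $S_1\hookrightarrow\D_1$ together with the algebra property plus the embedding $\calk^{2m}_{2,a+2m}(D)\hookrightarrow L_\infty(D)$ (valid since $2m\ge 4>3$) to bound the sup-norm factors and keep the surviving $L_2$-factor in $L_2$. Finally the vanishing-trace conditions $\partial_{t^k}(u^M)(0,\cdot)=0$ for $k\le\gamma_m$ follow from $\partial_{t^j}u(0,\cdot)=0$ for $j\le\gamma_m$ (a property inherited by elements of $S_0$ from $\tilde{L}^{-1}f$ and preserved under the Leibniz expansion, since every term contains at least one factor with a vanishing low-order trace).

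With the estimate \eqref{est-ab} in hand, the self-mapping and contraction properties are a routine scalar computation. For $u\in S_0$ one writes $T(u)-\tilde{L}^{-1}f=\tilde{L}^{-1}(Nu-f)=-\varepsilon\,\tilde{L}^{-1}(u^M)$, so $\|T(u)-\tilde{L}^{-1}f|S\|\le\varepsilon\,c\,\|\tilde{L}^{-1}\|\,\|u|S\|^M\le\varepsilon\,c\,\|\tilde{L}^{-1}\|\,(R+\|\tilde{L}^{-1}f|S\|)^M\le\varepsilon\,c\,\|\tilde{L}^{-1}\|\,(r_0\eta\|\tilde{L}^{-1}\|)^M$, using $\|\tilde{L}^{-1}f|S\|\le\|\tilde{L}^{-1}\|\eta$ and $R+\|\tilde{L}^{-1}\|\eta=r_0\eta\|\tilde{L}^{-1}\|$; the two smallness conditions on $\varepsilon$ (the first active when $r_0\|\tilde{L}^{-1}\|\eta>1$, the second when $r_0\|\tilde{L}^{-1}\|\eta<1$) are exactly the two regimes that force this quantity to be $\le R$. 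For the contraction, $\|T(u)-T(v)|S\|=\varepsilon\|\tilde{L}^{-1}(u^M-v^M)|S\|\le\varepsilon\,c\,M\,\|\tilde{L}^{-1}\|\,(r_0\eta\|\tilde{L}^{-1}\|)^{M-1}\|u-v|S\|$, and the same smallness conditions make the prefactor $q<1$. Banach's fixed point theorem then yields a unique fixed point $u\in S_0$, which is the asserted unique solution of Problem \ref{prob_nonlin} in $S_0$.

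I expect the main obstacle to be step (a)/the product estimate \eqref{est-ab}: bookkeeping the Leibniz expansion of $\partial_{t^k}(u^M)$ so that in \emph{every} resulting term the factors land in Kondratiev (or $L_2$, for the top derivative) spaces whose parameters are covered by Corollary \ref{thm-pointwise-mult-2}, while simultaneously checking that the algebra/multiplier hypotheses ($m\ge2$, $\gamma_m\ge1$, $a\ge-\tfrac12$) are never violated across all the index combinations — in particular the borderline cases $k=\gamma_m$ (lowest smoothness) and the single high-time-derivative factor — is where the genuine care lies; everything downstream is elementary.
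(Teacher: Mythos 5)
Your proposal is correct and follows essentially the same route as the paper: Banach's fixed point theorem on the closed ball $S_0$, with the whole weight of the argument carried by the Lipschitz estimate \eqref{est-ab} obtained from the factorization $u^M-v^M=(u-v)\sum_j u^jv^{M-1-j}$, Leibniz in $t$, the pointwise multiplication results of Corollary \ref{thm-pointwise-mult-2} (part (ii) for the borderline case $k=\gamma_m$), the generalized Sobolev embedding in time to control the $L_\infty$-in-time factors, and the same two-regime scalar computation for self-mapping and contraction. The only cosmetic deviations are that the paper organizes the derivative bookkeeping of $\partial_{t^r}u^j$ via Fa\`a di Bruno's formula, and that for the $\D_2$-component it attributes the hypothesis $m\geq 2$ to the Runst--Sickel multiplier estimate $\|uv|L_2\|\lesssim\|u|H^m\|\,\|v|L_2\|$ (needed because the top time derivative $\partial_{t^{\gamma_m+1}}u$ is only controlled in $L_\infty([0,T],L_2(D))$, not in $L_\infty([0,T],H^m(D))$) rather than to the Kondratiev algebra condition you cite.
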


\begin{proof} 
Let $u$ be the solution of the linear problem $\tilde{L}u=f$.  From Theorems \ref{thm-weighted-sob-reg} and  \ref{Sob-reg-3} we know   that 
$\tilde{L}^{-1}: \widetilde{\mathcal{D}}\rightarrow S   
$ 
is a bounded operator.  
If  $u^M\in \widetilde{\mathcal{D}}$ (this will immediatelly follow from our calculations in Step 1  as explained in Step 2 below), the nonlinear part $N$ satisfies the desired mapping properties, i.e., $Nu=f-\varepsilon u^M\in \widetilde{\mathcal{D}}$  
  and 
we can apply Theorem  \ref{thm-weighted-sob-reg}  now with right-hand side $Nu$. \\
{\em Step 1: }  Since 
\[
 (\tilde{L}^{-1}\circ N)(v)-(\tilde{L}^{-1}\circ N)(u)= \tilde{L}^{-1}(f-\varepsilon v^{M})-\tilde{L}^{-1}(f-\varepsilon u^{M}) =\varepsilon  \tilde{L}^{-1}(u^{M}-v^{M})
\]
one sees that $\tilde{L}^{-1}\circ N$ is a contraction if, and only, if 
\begin{equation}\label{est-0}
\varepsilon \| \tilde{L}^{-1}(u^{M}-v^{M})|S\| \leq q\|u-v|S\|\quad \text{ for some }\quad q<1, 
\end{equation}
where $u,v\in S_0$ (meaning $u,v\in   B_R(\tilde{L}^{-1}f)$ in $S$). 
We analyze the {resulting  condition} with the help of the formula  $ u^M-v^M=(u-v)\sum_{j=0}^{M-1} u^jv^{M-1-j}$. This together with Theorem \ref{thm-weighted-sob-reg} gives 
\begin{align}
\|\tilde{L}^{-1}(u^M-v^M)|S\|
&\leq \|\tilde{L}^{-1}\| \|u^M-v^M|\widetilde{\mathcal{D}}|\notag\\
&= \|\tilde{L}^{-1}\| \left\|u^M-v^M|
\D_1\cap \D_2 \right\|\notag\\
&= \|\tilde{L}^{-1}\| \left(\|u^M-v^M|\D_1\|+\|u^M-v^M|\D_2\|\right) \notag \\
&=  \|\tilde{L}^{-1}\| \left(\left\|(u-v)\sum_{j=0}^{M-1} u^jv^{M-1-j}|
\D_1
\right\|+ \left\|(u-v)\sum_{j=0}^{M-1} u^jv^{M-1-j}|
\D_2
\right\| \right)\notag\\
&\lesssim   \|\tilde{L}^{-1}\| \Bigg( \sum_{k=0}^{\gamma_m}\left\|\partial_{t^k}\left[(u-v)\sum_{j=0}^{M-1} u^jv^{M-1-j}\right]|L_2([0,T],\mathcal{K}^{2m(\gamma_m-k)}_{2,a+2m(\gamma_m-k)}(D))\right\| \notag \\
& \qquad \qquad \qquad  + \sum_{k=0}^{\gamma_m+1}
\left\|\partial_{t^{k}}\left[(u-v)\sum_{j=0}^{M-1} u^jv^{M-1-j}\right]|
L_2(D_T)\right\| \Bigg).  
\label{est-1}
\end{align}
Concerning the derivatives, we use Leibniz's formula twice and  we see that 
\begin{align}
\partial_{t^k}(u^M-v^M)
&=\partial_{t^k}\left[(u-v)\sum_{j=0}^{M-1} u^jv^{M-1-j}\right]\notag\\
&= \sum_{l=0}^k{k \choose l}\partial_{t^l}(u-v)\cdot \partial_{t^{k-l}}\left(\sum_{j=0}^{M-1}u^j v^{M-1-j}\right)\notag\\
&= \sum_{l=0}^k{k \choose l}\partial_{t^l}(u-v)\cdot 
\left[\left(\sum_{j=0}^{M-1} \sum_{r=0}^{k-l} {{k-l}\choose r} \partial_{t^r}u^j \cdot \partial_{t^{k-l-r}}v^{M-1-j}\right)\right]. \notag\\\label{est-2}
\end{align}
In order to estimate the terms $\partial_{t^r}u^j$ and $\partial_{t^{k-l-r}}v^{M-1-j}$ we apply Fa\`{a} di Bruno's formula \index{Fa\`{a} di Bruno formula}
\begin{equation}\label{FaaDiBruno}
\partial_{t^r}(f\circ g)=\sum\frac{r!}{k_1!\ldots k_r!}\left(\partial_{t^{k_1+\ldots + k_r}}f\circ g\right)\prod_{{i}=1}^{r}\left(\frac{\partial_{t^{{i}}}g}{{i}!}\right)^{k_{{i}}},
\end{equation}
where  the sum runs over all $r$-tuples of nonnegative integers $(k_1,\ldots, k_r)$ satisfying 
\begin{equation}\label{cond-kr}
1\cdot k_1+2\cdot k_2+\ldots +r\cdot k_r=r.
\end{equation}
{In particular, from \eqref{cond-kr}  we see that $k_{r}\leq 1$, where  $r=1,\ldots, k$. Therefore, the highest derivative $\partial_{t^r}u$ appears at most once.  }
We {apply the formula to} $g=u$ and $f(x)=x^j$ and
  make use of the embeddings  \eqref{kondratiev-emb} and the pointwise multiplier results from Theorem \ref{thm-pointwise-mult-2} (i) for $k\leq \gamma_m-1$.   (Note that the restriction '$a>\frac dp$' for $d=3$ in Theorem \ref{thm-pointwise-mult-2} (i) is satisfied  since in our situation  we have    $a+2m\geq m>\frac d2$ from  the  assumptions $a\in [-m,m]$ and $m\geq 2$.)
This yields 
\begin{align}
\Big\|&\partial_{t^r}u^j  \left. | \mathcal{K}^{2m(\gamma_m-k)}_{2,a+2m(\gamma_m-k)}(D)\right\| \notag\\
&\leq  c_{r,j}\left\|\sum_{k_1+\ldots +k_r\leq j, \atop 1\cdot k_1+2\cdot k_2+\ldots +r\cdot k_r=r} u^{j-(k_1+\ldots +k_r)}\prod_{{i}=1}^r \left|\partial_{t^{{i}}}u\right|^{k_{{i}}}| \mathcal{K}^{2m(\gamma_m-k)}_{2,a+2m(\gamma_m-k)}(D)\right\| \notag\\
&\lesssim \sum_{k_1+\ldots +k_r\leq j, \atop 1\cdot k_1+2\cdot k_2+\ldots +r\cdot k_r=r} \left\| u| \mathcal{K}^{2m(\gamma_m-k)}_{2,a+2m(\gamma_m-k)}(D)\right\|^{j-(k_1+\ldots +k_r)} 
\prod_{{i}=1}^{r} \left\| \partial_{t^{{i}}}u| \mathcal{K}^{2m(\gamma_m-k)}_{2,a+2m(\gamma_m-k)}(D)\right\|^{k_{{i}}}.  
\label{est-3}
\end{align}
For $k=\gamma_m$ we use Theorem \ref{thm-pointwise-mult-2}(ii). (Note that in Theorem \ref{thm-pointwise-mult-2}(ii) we require   that '$a-1\geq \frac dp-2$' with $d=3$ for the parameter. In our situation below $a-1$ has to be  replaced by $a$, which  leads to our restriction $a\geq \frac d2-2=-\frac 12$.)  Similar as above we obtain 
\begin{align}
\Big\|&\partial_{t^r}u^j  \left. | \mathcal{K}^{0}_{2,a}(D)\right\| \notag\\
&\leq  c_{r,j}\left\|\sum_{k_1+\ldots +k_r\leq j, \atop 1\cdot k_1+2\cdot k_2+\ldots +r\cdot k_r=r} u^{j-(k_1+\ldots +k_r)}\prod_{{i}=1}^r \left|\partial_{t^{{i}}}u\right|^{k_{{i}}}| \mathcal{K}^{0}_{2,a}(D)\right\| \notag\\
&\lesssim \sum_{k_1+\ldots +k_r\leq j, \atop 1\cdot k_1+2\cdot k_2+\ldots +r\cdot k_r=r} \left\| u| \mathcal{K}^{2}_{2,a+2}(D)\right\|^{j-(k_1+\ldots +k_r)}  
\left\| \partial_{t^r}u| \mathcal{K}^{0}_{2,a}(D)\right\|^{k_r}\prod_{{i}=1}^{r-1} \left\| \partial_{t^{{i}}}u| \mathcal{K}^{2}_{2,a+2}(D)\right\|^{k_{{i}}}\notag\\
&\lesssim \sum_{k_1+\ldots +k_r\leq j, \atop 1\cdot k_1+2\cdot k_2+\ldots +r\cdot k_r=r} \left\| u| \mathcal{K}^{2m\gamma_m}_{2,a+2m\gamma_m}(D)\right\|^{j-(k_1+\ldots +k_r)} \notag\\
& \qquad \qquad   
\left\| \partial_{t^r}u| \mathcal{K}^{2m(\gamma_m-r)}_{2,a+2m(\gamma_m-r)}(D)\right\|^{k_r}\prod_{{i}=1}^{r-1} \left\| \partial_{t^{{i}}}u| \mathcal{K}^{2m(\gamma_m-{i})}_{2,a+2m(\gamma_m-{i})}(D)\right\|^{k_{{i}}}. 
\label{est-33a}
\end{align}
Note that we require $\gamma_m\geq 1$ in the last step.  
We proceed similarly  for $\partial_{t^{k-l-r}}v^{M-1-j}$. 
Now \eqref{est-2} together with \eqref{est-3}  {and \eqref{est-33a}} inserted in \eqref{est-1} together with  Theorem \ref{thm-pointwise-mult-2} give \\

$\|\tilde{L}^{-1}\|\|u^M-v^M|\D_1\|$
\begin{align}
&\lesssim  \|\tilde{L}^{-1}\|\sum_{k=0}^{\gamma_m}\left(\int_0^T\left\|\partial_{t^k}\left[(u-v)\sum_{j=0}^{M-1} u^jv^{M-1-j}\right]|\mathcal{K}^{2m(\gamma_m-k)}_{2,a+2m(\gamma_m-k)}(D)\right\|^2\ud t\right)^{1/2}\notag\\
& {
\lesssim \|\tilde{L}^{-1}\|\sum_{k=0}^{\gamma_m}\sum_{l=0}^k\sum_{j=0}^{M-1}\sum_{r=0}^{k-l}\Bigg(\int_0^T \left\|\partial_{t^l}(u-v)
|\mathcal{K}^{2m(\gamma_m-k)}_{2,a+2m(\gamma_m-k)}(D)\right\|^2 }\notag \\
& {
\qquad \qquad 
\left\|\partial_{t^r}u^j |\mathcal{K}^{2m(\gamma_m-k)}_{2,a+2m(\gamma_m-k)}(D)\right\|^2
\left\|\partial_{t^{k-l-r}}v^{M-1-j}|\mathcal{K}^{2m(\gamma_m-k)}_{2,a+2m(\gamma_m-k)}(D)\right\|^2
\ud t\Bigg)^{1/2}
}\label{k=gamma_m} \\
& {
\lesssim \|\tilde{L}^{-1}\|\sum_{k=0}^{\gamma_m}\sum_{l=0}^k\sum_{j=0}^{M-1}\sum_{r=0}^{k-l}\Bigg(\int_0^T \left\|\partial_{t^l}(u-v)
|\mathcal{K}^{2m(\gamma_m-k)}_{2,a+2m(\gamma_m-k)}(D)\right\|^2 }\notag \\
& {
\sum_{\kappa_1+\ldots+\kappa_r\leq j, \atop \kappa_1+2\kappa_2+\ldots+r\kappa_r=r}
\left\|u |\mathcal{K}^{2m(\gamma_m-k)}_{2,a+2m(\gamma_m-k)}(D)\right\|^{2(j-(\kappa_1+\ldots+\kappa_r))}
\prod_{i=0}^r \left\| \partial_{t^{{i}}}u| \mathcal{K}^{2m(\gamma_m-{i})}_{2,a+2m(\gamma_m-{i})}(D)\right\|^{2\kappa_{{i}}}
}\notag \\
& {
\sum_{\kappa_1+\ldots+\kappa_{k-l-r}\leq M-1-j, \atop \kappa_1+2\kappa_2+\ldots+(k-l-r)\kappa_{k-l-r}=k-l-r}
\left\|v |\mathcal{K}^{2m(\gamma_m-k)}_{2,a+2m(\gamma_m-k)}(D)\right\|^{2(M-1-j-(\kappa_1+\ldots+\kappa_{k-l-r}))} }\notag \\
&{\qquad \qquad 
\prod_{i=0}^{k-l-r} \left\| \partial_{t^{{i}}}v| \mathcal{K}^{2m(\gamma_m-{i})}_{2,a+2m(\gamma_m-{i})}(D)\right\|^{2\kappa_{{i}}}
\ud t\Bigg)^{1/2}
}\notag \\
& {
\lesssim \|\tilde{L}^{-1}\|\sum_{k=0}^{\gamma_m}M\Bigg(\int_0^T \left\|\partial_{t^k}(u-v)
|\mathcal{K}^{2m(\gamma_m-k)}_{2,a+2m(\gamma_m-k)}(D)\right\|^2 }\notag \\
& {
\qquad 
\sum_{\kappa_1'+\ldots+\kappa_k'\leq \min\{M-1,k\}, \atop \kappa_k'\leq 1}
\max_{w\in \{u,v\}}\left\|w |\mathcal{K}^{2m(\gamma_m-k)}_{2,a+2m(\gamma_m-k)}(D)\right\|^{2(M-1-(\kappa_1'+\ldots+\kappa_k'))}}\notag \\
& {\qquad 
\prod_{i=0}^k  \max\left\{\left\| \partial_{t^{{i}}}u| \mathcal{K}^{2m(\gamma_m-{i})}_{2,a+2m(\gamma_m-{i})}(D)\right\|, \left\| \partial_{t^{{i}}}v| \mathcal{K}^{2m(\gamma_m-{i})}_{2,a+2m(\gamma_m-{i})}(D)\right\|, 1\right\}^{4\kappa_i'} \ud t\Bigg)^{1/2}
}\label{kappa} \\
&\lesssim M \|\tilde{L}^{-1}\| \cdot 
\left\|u-v| \bigcap_{k=0}^{\gamma_m+1}H^{k}([0,T],\mathcal{K}^{2m(\gamma_m-(k-1))}_{2,a+2m(\gamma_m-(k-1))}(D))\right\|\cdot \notag\\
& \qquad \max_{w\in \{u,v\}}\max_{l=0,\ldots, \gamma_m} \max \Big(\left\| \partial_{t^l}w |L_{\infty}([0,T],\mathcal{K}^{2m(\gamma_m-l)}_{2,a+2m(\gamma_m-l)}(D))\right\|,\  
1\Big)^{{2(M-1)}}.\notag\\ \label{est-4}
\end{align}
We give some explanations concerning the estimate above. In \eqref{k=gamma_m} the term with $k=\gamma_m$ requires some special care since we have to apply Theorem \ref{thm-pointwise-mult-2} (ii). In this case we calculate 
\begin{align*}
\Bigg\| & \left.\partial_{\gamma_m}\left[(u-v)\left(\sum_{j=0}^{M-1}u^jv^{M-1-j}\right)\right]|\calk^{0}_{2,a}(D)\right\|\notag \\ 
& 
\lesssim \left\|\partial_{\gamma_m}(u-v)|\calk^{0}_{2,a}(D)\right\|
\sum_{j=0}^{M-1}\left\|u^jv^{M-1-j}|\calk^{2}_{2,a+2}(D)\right\|  \notag\\ 
& \qquad + \left\|u-v|\calk^{2}_{2,a+2}(D)\right\|
\sum_{j=0}^{M-1}\sum_{r=0}^{\gamma_m}\left\|(\partial_{t^r}u^j)(\partial_{t^{\gamma_m-r}}v^{M-1-j})|\calk^{0}_{2,a}(D)\right\| \\ 
& \qquad 
+ \left\|\sum_{r=1}^{\gamma_m-1}{\gamma_m\choose r}\partial_r(u-v)\partial_{\gamma_m-r}\left(\sum_{j=0}^{M-1}\ldots\right) |\calk^{0}_{2,a}(D)\right\|.  
\end{align*}
The lower order derivatives in the last line  cause no problems since we can (again) apply  Theorem \ref{thm-pointwise-mult-2}(i) as before. 
The term $\left\|u^jv^{M-1-j}|\calk^{2}_{2,a+2}(D)\right\|$ can now be further estimated with the help of Theorem \ref{thm-pointwise-mult-2}(i). For the term $\sum_{r=0}^{\gamma_m}\left\|(\partial_{t^r}u^j)(\partial_{t^{\gamma_m-r}}v^{M-1-j})|\calk^{0}_{2,a}(D)\right\|$ we again use Theorem \ref{thm-pointwise-mult-2}(ii), then proceed as in \eqref{est-33a} and see that the resulting estimate yields  \eqref{k=gamma_m}.\\ 
Moreover, in \eqref{kappa} we used the fact that in the step before we have two sums with  $\kappa_1+\ldots +\kappa_r\leq j$ and $\kappa_1+\ldots+\kappa_{k-l-r}\leq M-1-j$, i.e., we have $k-l$ different $\kappa_i$'s which leads to at most $k$ different $\kappa_i$'s if $l=0$.  We allow all combinations of $\kappa_i$'s and  redefine the $\kappa_i$'s in the second sum leading to $\kappa_1', \ldots , \kappa_k'$ with $\kappa_1'+\ldots+\kappa_k'\leq M-1$ and replace the old conditions $\kappa_1+2\kappa_2+r\kappa_r\leq r$ and $\kappa_1+2\kappa_2+(k-l-r)\kappa_{k-l-r}\leq k-l-r$ by the weaker ones $\kappa_1'+\ldots+\kappa_k'\leq k$ and $\kappa_k'\leq 1$. This causes no problems since the other terms appearing in this step do not depend on $\kappa_i$ apart from the product term. There, the fact that some of the old $\kappa_i$'s from both sums might coincide is reflected in the new exponent $4\kappa_i'$.   
From Theorem \ref{thm-sob-emb}  we conclude that 
\begin{eqnarray}
u,v \in S&\hookrightarrow & \bigcap_{k=0}^{\gamma_m+1}H^{{k}}([0,T],\mathcal{K}^{2m(\gamma_m-(k-1))}_{2,a+2m(\gamma_m-(k-1))}(D))\notag\\
&\hookrightarrow & \bigcap_{k=1}^{\gamma_m+1}\mathcal{C}^{{k-1,\frac 12}}([0,T],\mathcal{K}^{2m(\gamma_m-(k-1))}_{2,a+2m(\gamma_m-(k-1))}(D))
\notag\\
&\hookrightarrow&  \bigcap_{k=1}^{\gamma_m+1}{C}^{{k-1}}([0,T],\mathcal{K}^{2m(\gamma_m-(k-1))}_{2,a+2m(\gamma_m-(k-1))}(D))
= \bigcap_{l=0}^{\gamma_m}{C}^{{l}}([0,T],\mathcal{K}^{2m(\gamma_m-l)}_{2,a+2m(\gamma_m-l)}(D)), \notag
\end{eqnarray}
hence, the term  {involving the maxima, $\max_{w\in \{u,v\}}\max_{l=0,\ldots, \gamma_m}\max (\ldots)^{M-1}$} in \eqref{est-4} is bounded {by $\max(R+\|\tilde{L}^{-1}f|S\|,1)^{M-1}$}.  
Moreover, since $u$ and $v$ are taken from $B_R(\tilde{L}^{-1}f)$  in {$S=S_1\cap  S_2$}, we obtain from \eqref{est-4},  
 \begin{align}
\|\tilde{L}^{-1}\| & \|u^M-v^M|\D_1\|\notag \\
&\leq  {c_0}\|\tilde{L}^{-1}\|M\max(R+\|\tilde{L}^{-1}f|S\|,1)^{{2(M-1)}}\|u-v| S\|\notag\\
&\leq  {c_2}\|\tilde{L}^{-1}\|M\max(R+\|\tilde{L}^{-1}\|\cdot \|f|\D\|,1)^{{2(M-1)}} \|u-v| S\|\notag\\
&= {c_2}\|\tilde{L}^{-1}\|M\max(R+\|\tilde{L}^{-1}\| \eta,1 )^{{2(M-1)}} \|u-v| S\|,   \label{est-ball}
\end{align}
where we put $\eta:=\|f|\D\|$ in the last line, $c_0$ denotes the constant resulting from \eqref{est-3} and \eqref{est-4} and $c_2=c_0c_1$ with $c_1$ being the constant from the estimates in Theorem \ref{thm-weighted-sob-reg}. 

We now turn our attention towards the second term $\|\tilde{L}^{-1}\| \|u^M-v^M|\D_2\|$ in \eqref{est-1} and calculate \\
{
\begin{align}
\|\tilde{L}^{-1}\|& \|(u^M-v^M)|\D_2\| \notag\\
&= \|\tilde{L}^{-1}\|\left\|(u-v)\sum_{j=0}^{M-1} u^jv^{M-1-j}|H^{\gamma_m+1}([0,T],L_2(D))\right\|\notag\\
&= \|\tilde{L}^{-1}\|\sum_{k=0}^{\gamma_m+1}\left\|\partial_{t^k}\left[(u-v)\sum_{j=0}^{M-1} u^jv^{M-1-j}\right]|L_2(D_T)\right\| \notag\\
&=  \|\tilde{L}^{-1}\|\sum_{k=0}^{\gamma_m+1}\left\|\sum_{l=0}^k{k \choose l}\partial_{t^l}(u-v)\cdot \right. 
\left.\left[\left(\sum_{j=0}^{M-1} \sum_{r=0}^{k-l} {{k-l}\choose r} \partial_{t^{r}}u^j \cdot \partial_{t^{k-l-r}}v^{M-1-j}\right)\right]|L_2(D_T)\right\| \notag\\
&\lesssim  \|\tilde{L}^{-1}\|\sum_{k=0}^{\gamma_m+1}\left\|\sum_{l=0}^k|\partial_{t^l}(u-v)|\cdot \right.
\left.\left[\left(\sum_{j=0}^{M-1} \sum_{r=0}^{k-l}  |\partial_{t^{r}}u^j \cdot \partial_{t^{k-l-r}}v^{M-1-j}|\right)\right]|L_2(D_T)\right\|, 
\label{est-1a}
\end{align}
where we used Leibniz's formula twice as in \eqref{est-2} in the second but last line. Again  Fa\`{a} di Bruno's formula, cf. \eqref{FaaDiBruno}, is applied  in order to estimate the derivatives in \eqref{est-1a}. We use a special case of the multiplier result from \cite[Sect.~4.6.1, Thm.~1(i)]{RS96}, 
which tells us that for  $m>\frac 32$ we have 
\begin{equation}\label{multiplier-lim}
\|uv|L_2\|\lesssim \|u|H^m\|\cdot \|v|L_2\|   
\end{equation}
(we remark that this is exactly the point where our assumption $m\geq 2$ comes into play). 
With this we obtain 
\begin{align}
\Big\|  \partial_{t^r}u^j  | L_2(D)\Big\| 
& \leq  c_{r,j}\left\|\sum_{k_1+\ldots +k_r\leq j} u^{j-(k_1+\ldots +k_r)}\prod_{{i}=1}^r \left|\partial_{t^{{i}}}u\right|^{k_{{i}}}| L_2(D)\right\| \notag\\
&\lesssim \sum_{k_1+\ldots +k_r\leq j} \left\| u| H^m(D)\right\|^{j-(k_1+\ldots +k_r)} \prod_{{i}=1}^{r-1} \left\| \partial_{t^{{i}}}u| H^m(D)\right\|^{k_{{i}}} \left\| \partial_{t^r}u| L_2(D)\right\|^{k_r}.  \label{est-3a}
\end{align}
Similar  for $\partial_{t^{k-l-r}}v^{M-1-j}$. As before, from \eqref{cond-kr}  we observe {$k_{r}\leq 1$, therefore the highest derivative $u^{(r)}$} appears at most once.  {Note that since $H^m(D)$ is a multiplication algebra for $m> \frac d2$, we get \eqref{est-3a} with $L_2(D)$ replaced by $H^m(D)$ as well.} 
Now {\eqref{multiplier-lim} and \eqref{est-3a}  inserted in \eqref{est-1a}} gives 
\begin{align}
\|& \tilde{L}^{-1}\| \|u^M-v^M|\D_2\|\notag\\
& {
= \|\tilde{L}^{-1}\|\sum_{k=0}^{\gamma_m+1}\Bigg(\int_0^T\left\|\partial_{t^k}(u-v)\sum_{j=0}^{M-1}u^jv^{M-1-j}| L_2(D)\right\|^2 \ud t\Bigg)^{1/2}
}\notag\\
& {
\lesssim \|\tilde{L}^{-1}\|\sum_{k=0}^{\gamma_m+1}\sum_{l=0}^k\Bigg(\int_0^T\left\|\partial_{t^l}(u-v)|H^m(D)\right\|^2 }\notag\\
& \qquad\qquad  {\sum_{j=0}^{M-1}\sum_{r=0}^{k-l}\left\|\partial_{t^r}u^j \cdot \partial_{t^{k-l-r}}v^{M-1-j}| L_2(D)\right\|^2 \ud t\Bigg)^{1/2}
}\notag\\
& {
\lesssim \|\tilde{L}^{-1}\|\sum_{k=0}^{\gamma_m+1}\sum_{l=0}^k\Bigg(\int_0^T\bigg\{\left\|\partial_{t^l}(u-v)|H^m(D)\right\|^2 }\notag\\
& \qquad\qquad  {\sum_{j=0}^{M-1}\sum_{r=0, \atop (k-l-r\neq \gamma_m+1)\wedge (r\neq \gamma_m+1)}^{k-l}\left\|\partial_{t^r}u^j|H^m(D)\|^2 \|\partial_{t^{k-l-r}}v^{M-1-j}| H^m(D)\right\|^2 }\notag \\
& {
\qquad \qquad +\|u-v|H^m(D)\|^2\|\partial_{t^{\gamma_m+1}}u^j|L_2(D)\|^2\|v^{M-1-j}|H^m(D)\|^2}\notag \\
& \qquad \qquad {+ \|u-v|H^m(D)\|^2\|u^j|H^m(D)\|^2\|\partial_{t^{\gamma_m+1}}v^{M-1-j}|L_2(D)\|^2
\bigg\}\ \ud t\Bigg)^{1/2}
}\notag\\
&{\lesssim \|\tilde{L}^{-1}\|\sum_{k=0}^{\gamma_m+1}\sum_{l=0}^k\Bigg(\int_0^T\left\|\partial_{t^l}(u-v)| H^m(D)\right\|^2 \cdot }\notag \\
& {\qquad \qquad \sum_{j=0}^{M-1} \sum_{r=0}^{k-l}\sum_{\kappa_1+\ldots +\kappa_{r}\leq j, \atop \kappa_1+2\kappa_2+\ldots+ r\kappa_{r}\leq r}
  \left\| u| H^m(D)\right\|^{2(j-(\kappa_1+\ldots +\kappa_{r}))}} \notag \\
   & { \qquad 
\left.\begin{cases}   \left\| \partial_{t^{{r}}}u| L_2(D)\right\|^{2\kappa_{{r}}}\prod_{{i}=1}^{r-1} \left\| \partial_{t^{{i}}}u| H^m(D)\right\|^{2\kappa_{{i}}}, & r=\gamma_m+1,\\
\prod_{{i}=1}^{r} \left\| \partial_{t^{{i}}}u| H^m(D)\right\|^{2\kappa_{{i}}},    & r\neq \gamma_m+1
 \end{cases}  \right\}
}\notag \\
  & {\qquad  \sum_{\kappa_1+\ldots +\kappa_{k-l-r}\leq M-1-j, \atop \kappa_1+2\kappa_2+\ldots+(k-l-r)\kappa_{k-l-r}\leq k-l-r} 
  \left\| v| H^m(D)\right\|^{2(M-1-j-(\kappa_1+\ldots +\kappa_{k-l-r}))} }\notag \\
    & {\qquad 
\left.\begin{cases}   \left\| \partial_{t^{{r}}}v| L_2(D)\right\|^{2\kappa_{{r}}}\prod_{{i}=1}^{k-l-r-1} \left\| \partial_{t^{{i}}}v| H^m(D)\right\|^{2\kappa_{{i}}}, & k-l-r=\gamma_m+1,\\
\prod_{{i}=1}^{l-k-r} \left\| \partial_{t^{{i}}}v| H^m(D)\right\|^{2\kappa_{{i}}},    & k-l-r\neq \gamma_m+1
 \end{cases}  \right\}
\ud t\Bigg)^{1/2}}\notag\\
&{\lesssim \|\tilde{L}^{-1}\|\sum_{k=0}^{\gamma_m+1}\Bigg(\int_0^T\left\|\partial_{t^k}(u-v)| H^m(D)\right\|^2 \cdot }\notag \\
& {\qquad \qquad M \sum_{\kappa_1'+\ldots +\kappa_{k}'\leq \min\{M-1,k\}}
  \max_{w\in \{u,v\}}\left\| w| H^m(D)\right\|^{2(M-1-(\kappa_1'+\ldots +\kappa_{k}'))}} \notag \\
   & { \qquad 
\left.\begin{cases}   \max(\left\| \partial_{t^{{k}}}w| L_2(D)\right\|^{4\kappa_{k}'}\prod_{{i}=1}^{k-1} \left\| \partial_{t^{{i}}}w| H^m(D)\right\|^{4\kappa_{i}'}, 1), & k=\gamma_m+1,\\
\max(\prod_{{i}=1}^{k} \left\| \partial_{t^{{i}}}w| H^m(D)\right\|^{4\kappa_{i}'}, 1),    & k\neq \gamma_m+1
 \end{cases}  \right\}
\ud t\Bigg)^{1/2}}\notag\\
&\lesssim \|\tilde{L}^{-1}\|M
\|u-v| H^{\gamma_m+1}([0,T],H^m(D))\|^2\max_{w\in \{u,v\}}\max_{{i}=0,\ldots, \gamma_m} \max\notag\\
&  \qquad \left(
\left\| \partial_{t^{{i}}}w|L_{\infty}([0,T],H^m(D))\right\|,\  \left\| \partial_{t^{\gamma_m+1}}w|L_{\infty}([0,T],L_2(D))\right\|,\ 1\right)^{{{2(M-1)}}}.\notag\\ \label{est-4a}
\end{align}
{Similar to \eqref{est-4} in the calculations above the term $k=\gamma_m+1$ required some special care. For the redefinition of the $\kappa_i$'s in the second but last line in \eqref{est-4a} we refer to the explanations given after \eqref{est-4}. }
From Theorem \ref{thm-sob-emb} we see that 
\begin{eqnarray}
u,v \in S&\hookrightarrow & H^{{\gamma_m+1}}([0,T],{\mathring{H}^m}(D))\cap H^{{\gamma_m+2}}([0,T],L_2(D))
\notag\\
&\hookrightarrow & \mathcal{C}^{{\gamma_m,\frac 12}}([0,T],{\mathring{H}^m}(D))\cap \mathcal{C}^{{\gamma_m+1,\frac 12}}([0,T],L_2(D))
\notag\\
&\hookrightarrow & {C}^{{\gamma_m}}([0,T],{\mathring{H}^m}(D))
\cap {C}^{{\gamma_m+1}}([0,T],L_2(D)), 
\label{est-4aa}
\end{eqnarray}
hence the term  ${\max_{w\in \{u,v\}}\max_{m=0,\ldots, l}\max(\ldots)^{M-1}}$ in \eqref{est-4a} is bounded.  Moreover, since $u$ and $v$ are taken from $B_R(\tilde{L}^{-1}f)$  in $S_2={H}^{m,\gamma_m+2\ast}(D_T)=H^{\gamma_m+1}([0,T],{\mathring{H}^m}(D))\cap  H^{\gamma_m+2}([0,T],H^{-m}(D)) $, as in \eqref{est-ball} we obtain from \eqref{est-4a} {and \eqref{est-4aa}},  
 \begin{align}
\|\tilde{L}^{-1}\|\|u^M-v^M|\D_2\|\leq c_3 
 \|\tilde{L}^{-1}\|M\max(R+\|\tilde{L}^{-1}\| \eta, 1)^{{2(M-1)}}\cdot \|u-v| S\|, \label{est-ball_a}
\end{align}
where we put $\eta:=\|f|\D\|$ {and $c_3$ denotes the constant arising from our estimates \eqref{est-4a} and \eqref{est-4aa} above}. 
}
Now \eqref{est-1} together with \eqref{est-ball} and \eqref{est-ball_a} yields 
\begin{align}\label{est-ab}
\|\tilde{L}^{-1}(u^M-v^M)|S\|
& \leq \|\tilde{L}^{-1}\|\|(u^M-v^M)|\widetilde{\mathcal{D}}\|\notag \\
&\leq c \|\tilde{L}^{-1}\|M\max(R+\|\tilde{L}^{-1}\|\eta, 1)^{M-1}\|u-v|S\|, 
\end{align}
where $c=c_2+c_3$. 
For $\tilde{L}^{-1}\circ N$ to be a contraction, we therefore require 
\[
{c}\varepsilon \|\tilde{L}^{-1}\|M\max(R+\|\tilde{L}^{-1}\|\eta,1)^{{2(M-1)}}<1,
\]
{cf. \eqref{est-0}.} In case of $\ \max(R+\|\tilde{L}^{-1}\|\eta,1)=1$ this leads to 
\begin{equation}\label{cond-01}
\|\tilde{L}^{-1}\|<\frac{1}{{c}\varepsilon M}.
\end{equation}
On the other hand, if  $\ \max(R+\|\tilde{L}^{-1}\|\eta,1)=R+\|\tilde{L}^{-1}\|\eta$, we choose  $R=(r_0-1)\eta\|\tilde{L}^{-1}\|$, which gives {rise to} the condition 
\begin{equation}\label{cond-1}
{c}\varepsilon\|\tilde{L}^{-1}\|M(r_0\|\tilde{L}^{-1}\|\eta)^{{2(M-1)}}<1,\quad {\text{i.e.,}} \quad \eta^{{2(M-1)}} \|\tilde{L}^{-1}\|^{{2M-1}}<\frac{1}{{c}\varepsilon M}\left(\frac{1}{r_0}\right)^{{2(M-1)}}.
\end{equation}
{\em Step 2: } The calculations in Step 1 show that $u^M\in \widetilde{\mathcal{D}}$: 
The fact that $u^M\in \D_1\cap \D_2$ follows from the estimate \eqref{est-ab}.   In particular, taking $v=0$ in \eqref{est-ab} we get an estimate from above for $\|u^M|\widetilde{\mathcal{D}}\|$. The upper bound depends on $\|u|S\|$ and several constants which depend on $u$ but are finite whenever we have $u\in S$, see also \eqref{est-4} and \eqref{est-4a}. The dependence on $R$ in \eqref{est-ab} comes from the fact that we choose $u\in B_R(\tilde{L}^{-1}f)$ in $S$ there. 
However, the same argument can also be applied to an arbitrary $u\in S$; this would result in a different constant $\tilde{c}$. 
In order to have $u^M\in \widetilde{\mathcal{D}}$, we still need to show that $\mathrm{Tr}\left(\partial_{t^k}u^M\right)=0$, $k=0,\ldots, \gamma_m$. This follows from the same arguments as  in   \cite[Thm.~4.10]{DS19}:  
Since $u\in S\hookrightarrow H^{\gamma_m+2}([0,T], H^{-m}(D))\hookrightarrow C^{\gamma_m+1}([0,T], H^{-m}(D))$ we see that the trace operator $\mathrm{Tr}\left(\partial_{t^k}u\right):=\left(\partial_{t^k}u\right)(0,\cdot)$ is well defined for $k=0,\ldots, \gamma_m+1$.  
Using the initial assumption $u(0,\cdot)=0$ in Problem \ref{prob_nonlin}, 
by density arguments ($C^{\infty}(D_T)$ is dense in $S$)
and  induction we deduce  that $(\partial_{t^k}u)(0,\cdot)=0$ for all $k=0,\ldots, \gamma_m+1$. 
 Moreover, since by Theorem \ref{thm-sob-emb} 
 \begin{align*}
 u^M \in \D_1\cap \D_2 &\hookrightarrow H^{\gamma_m+1}([0,T], L_2(D)) 
 \hookrightarrow C^{\gamma_m}([0,T], L_2(D)), 
\end{align*}
 we see that the trace operator $\mathrm{Tr}\left(\partial_{t^k}u^M\right):=\left(\partial_{t^k}u^M\right)(0,\cdot)$ is well defined for $k=0,\ldots, \gamma_m$. By \eqref{est-3a} below the term $\|\left(\partial_{t^k}u^M\right)(0,\cdot)|L_2(D)\|$ is estimated from above by {powers of} $\|\left(\partial_{t^l}u\right)(0,\cdot)|H^m(D)\|$, $l=0,\ldots, k$. 
 Since all these terms are equal to zero, 
 this shows that $u^M\in \widetilde{\mathcal{D}}$. \\
{\em Step 3: } The next step is to show that $(\tilde{L}^{-1}\circ N)(B_R(\tilde{L}^{-1}f))\subset B_R(\tilde{L}^{-1}f)$ in $S$. Since $(\tilde{L}^{-1}\circ N)(0)=\tilde{L}^{-1}(f-\varepsilon 0^M)=\tilde{L}^{-1}f$, we only need to apply the above estimate \eqref{est-ab} with $v=0$. This gives 
\begin{align*}
\varepsilon\|\tilde{L}^{-1}u^M|S\|
&\leq {c}\varepsilon \|\tilde{L}^{-1}\|M\max(R+\|\tilde{L}^{-1}\| \eta,1)^{{2(M-1)}}(R+\|\tilde{L}^{-1}\| \eta)\\
&\overset{!}{\leq}R=(r_0-1)\eta\|\tilde{L}^{-1}\|, 
\end{align*}
which, {in case that}  $\max(R+\|\tilde{L}^{-1}\|\eta,1)=1$,  leads to 
\begin{equation}\label{cond-02}
\|\tilde{L}^{-1}\|<\frac{r_0-1}{r_0}\left(\frac{1}{{c}\varepsilon M}\right), 
\end{equation}
whereas for $\max(R+\|\tilde{L}^{-1}\|\eta,1)=R+\|\tilde{L}^{-1}\|\eta$ we get 
\begin{equation}\label{cond-2}
{\eta^{2(M-1)} \|\tilde{L}^{-1}\|^{2M-1}\leq \frac{1}{{c}\varepsilon M}(r_0-1)\left(\frac{1}{r_0}\right)^{2M-1}. }
\end{equation}
We see that condition \eqref{cond-02} implies \eqref{cond-01}. Furthermore, since 
\[
{(r_0-1)\left(\frac{1}{r_0}\right)^{2M-1}=\frac{r_0-1}{r_0}\left(\frac{1}{r_0}\right)^{2(M-1)}<\left(\frac{1}{r_0}\right)^{2(M-1)}, }
\]
also condition \eqref{cond-2} implies \eqref{cond-1}. 
Thus, by applying Banach's fixed point theorem in a sufficiently small ball around the solution of the corresponding linear problem, we obtain {a unique} solution of Problem \ref{prob_nonlin}. 
\end{proof}

\remark{
The restriction $m\geq 2$ in Theorem \ref{nonlin-B-reg1} comes from the fact that we require $s_2=m>\frac d2=\frac 32$ in \eqref{multiplier-lim}. This assumption can probably be weakened, since we expect   the solution to satisfy  $u\in L_2([0,T], H^{s}(D))$ for all $s<\frac 32$, see also Remark \ref{gen-thm-parab-Besov} and the explanations given there.\\ 
Moreover, the restriction  $a\geq -\frac 12$ in  Theorem \ref{nonlin-B-reg1} comes from Theorem \ref{thm-pointwise-mult-2}(ii) that we applied. 
Together with the restriction $a\in [-m,m]$ we are looking for  $a\in [-\frac 12,m]$ if the domain is a corner domain, e.g.  a smooth cone $K\subset \real^3$ (subject to some truncation). For polyhedral cones with edges $M_k$, $k=1,\ldots, l$,  we   furthermore require $-\delta_-^{(k)}<a+2m(\gamma_m-i)+m<\delta^{(k)}_+$ for $i=0,\ldots, \gamma_m$  from Theorem \ref{thm-weighted-sob-reg}. 
}

\section{Regularity results in Besov spaces}
\label{sect-besov-reg}

With all  preliminary work, in this section we finally come to the presentation of the regularity results in Besov spaces for Problems \ref{prob_parab-1a} and \ref{prob_nonlin}. 
For this purpose, we rely  on the results from Section \ref{sect-reg-sob-kon} on regularity in Sobolev and Kondratiev spaces for the respective problems and combine these with the embeddings  of Kondratiev spaces into Besov spaces. It turns out  that in all cases studied  the Besov regularity is higher than the Sobolev regularity.  This indicates that adaptivity pays off when solving these problems numerically.  \\
The Sobolev regularity   we are working with (e.g. see Theorem \ref{Sob-reg-2} for Problem \ref{prob_parab-1a}) canonically comes out from the variational formulation of the problem, i.e., we have spatial Sobolev regularity $m$ if the corresponding differential operator is of order $2m$. We give an outlook on how our results could be improved by using regularity results in fractional Sobolev spaces instead. It is planned to do further investigations in this direction in the future. \\
Moreover, we discuss the role of the weight parameter $a$ appearing in our Kondratiev spaces to some extent. 

\subsection{Besov regularity of Problem I}
\label{subsect-besov-p1}

A combination of Theorem \ref{thm-weighted-sob-reg} (Kondratiev regularity A)  and the embedding in Theorem \ref{thm-hansen-gen}  yields the following Besov regularity of Problem \ref{prob_parab-1a}.

\begin{theorem}[{\bf Parabolic Besov regularity A}]\label{thm-parab-Besov}
Let $D$ be a bounded polyhedral domain in $\real^3$. Let $\gamma\in \nat $ with  ${\gamma\geq 2m}$ and put $\gamma_m:=\left[ \frac{\gamma-1}{2m}\right]$. Furthermore, let  $a\in \real$ with   $a\in [-m,m]$.  Assume that  the right-hand side $f$ of Problem \ref{prob_parab-1a} satisfies 
{
\begin{itemize}
\item[(i)] $\partial_{t^k} f\in L_2(D_T)\cap L_2([0,T],\mathcal{K}^{2m(\gamma_m-k)}_{2,a+2m(\gamma_m-k)}(D))$, \ $k=0,\ldots, \gamma_m$; \quad 
$\partial_{t^{\gamma_m+1}} f\in L_2(D_T)$. 
\item[(ii)] $\partial_{t^k} f(x,0)=0$, \quad  $k=0,1,\ldots, {\gamma_m}.$
\end{itemize}
}
{Furthermore, let  Assumption \ref{assumptions}  hold for weight parameters $b=a+2m(\gamma_m-i)$, where $i=0,\ldots, \gamma_m$, and  $b'=-m$.  }
Then 
for the weak solution $u\in {{H}}^{m,{\gamma_m+2}\ast}(D_T)$ of Problem \ref{prob_parab-1a}, we have 
\begin{equation}\label{parab-Besov}
u\in L_{2}([0,T],B^{\alpha}_{\tau,\infty}(D)) \qquad \text{for all}\quad {0<\alpha<\min\left(\gamma,\frac{3}{\delta}m\right), } \quad     
\end{equation}
where $\frac 12 <\frac{1}{\tau}<\frac{\alpha}{d}+\frac 12$ and $\delta$ denotes the dimension of the singular set of $D$. 
In particular, for any $\alpha$ satisfying \eqref{parab-Besov} and $\tau$ as above, we have the a priori estimate 
\begin{align*}
\|u|& L_{2}([0,T],B^{\alpha}_{\tau,\infty}(D))\|
\lesssim  
\sum_{k=0}^{{\gamma_m}}\|\partial_{t^k} f|{L_2([0,T],{\mathcal{K}^{2m(\gamma_m-k)}_{2,a+2m(\gamma_m-k)}(D))}}\|+\sum_{k=0}^{{\gamma_m}+1}\|\partial_{t^k} f|{L_2(D_T)}\|. 
\end{align*}
\end{theorem}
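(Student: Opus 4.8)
The plan is to combine the time-dependent Kondratiev regularity of Theorem~\ref{thm-weighted-sob-reg} with the embedding from Theorem~\ref{thm-hansen-gen}, applied for $k=0$ (i.e.\ without time derivatives) and integrability exponent $q=2$. First I would record the outcome of Theorem~\ref{thm-weighted-sob-reg}: under hypotheses (i), (ii) and the operator pencil condition, the weak solution satisfies, in particular for $l=-1$,
\[
u\in L_2([0,T],\mathcal{K}^{2m\gamma_m}_{2,a+2m\gamma_m}(D))\hookrightarrow L_2([0,T],\mathcal{K}^{\gamma}_{2,a+2m\gamma_m}(D)),
\]
using the monotonicity \eqref{kondratiev-emb} in the smoothness index together with $2m\gamma_m=2m\left[\frac{\gamma-1}{2m}\right]\le \gamma-1<\gamma$; hence $u(t)$ lies in a Kondratiev space of smoothness order $\gamma$ for a.e.\ $t$. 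Simultaneously, from the Sobolev regularity (Theorem~\ref{Sob-reg-2}, or the $l=0$ case of Theorem~\ref{Sob-reg-3}) we have $u\in L_2([0,T],\mathring{H}^m(D))\hookrightarrow L_2([0,T],B^s_{2,\infty}(D))$ for every $s\le m$, actually for $s<m$ with room to spare, and in any case for $s$ slightly below $m$ using $\mathring H^m(D)=H^m(D)\hookrightarrow B^m_{2,\infty}(D)$.

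Next I would feed these two memberships into Theorem~\ref{thm-hansen-gen} with $p=2$, $q=2$, $k=0$, smoothness parameter $\gamma$ for the Kondratiev space, weight parameter $a_{\mathrm{new}}:=a+2m\gamma_m$, and Sobolev parameter $s$. The hypothesis of that theorem requires $\min(s,a_{\mathrm{new}})>\frac{\delta}{3}\gamma$; choosing $\gamma$ (equivalently $\alpha$, since we will take $\alpha<\gamma$) so that $\alpha<\frac{3}{\delta}m$ guarantees we can pick $s<m$ with $s>\frac{\delta}{3}\alpha$, and the condition on $a_{\mathrm{new}}$ is handled by the standing pencil assumptions (one checks $a+2m\gamma_m$ is large enough, or one simply replaces $\gamma$ by the slightly smaller $\alpha$ when invoking the embedding — the point is that only smoothness order $\alpha<\min(\gamma,\frac{3}{\delta}m)$ is claimed). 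The conclusion of Theorem~\ref{thm-hansen-gen} then gives
\[
L_2([0,T],\mathcal{K}^{\alpha}_{2,a_{\mathrm{new}}}(D))\cap L_2([0,T],B^s_{2,\infty}(D))\hookrightarrow L_2([0,T],B^{\alpha}_{\tau,\infty}(D))
\]
for all $\tau_*<\tau<\tau_0$ with $\frac{1}{\tau_*}=\frac{\alpha}{3}+\frac12$, which is exactly the range $\frac12<\frac1\tau<\frac{\alpha}{d}+\frac12$ claimed (with $d=3$). This yields $u\in L_2([0,T],B^\alpha_{\tau,\infty}(D))$.

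Finally I would assemble the a priori estimate: the norm of $u$ in $L_2([0,T],B^\alpha_{\tau,\infty}(D))$ is controlled by the sum of its norms in $L_2([0,T],\mathcal{K}^{\alpha}_{2,a_{\mathrm{new}}}(D))$ and $L_2([0,T],B^s_{2,\infty}(D))$ (continuity of the embedding in Theorem~\ref{thm-hansen-gen}); the first is bounded by the right-hand side of \eqref{weighted-sobolev-est} (take the $l=-1$ term and use \eqref{kondratiev-emb}), the second by $\|u|H^{m,1*}(D_T)\|$ and hence, via \eqref{Sob-reg-1}, by $\|f|L_2([0,T],H^{-m}(D))\|\lesssim\sum_{k=0}^{\gamma_m+1}\|\partial_{t^k}f|L_2(D_T)\|$ (using $L_2(D)\hookrightarrow H^{-m}(D)$). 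Summing gives the stated estimate. The main obstacle — really the only nontrivial bookkeeping — is verifying that the weight parameter $a_{\mathrm{new}}=a+2m\gamma_m$ together with the chosen $s$ simultaneously satisfies $\min(s,a_{\mathrm{new}})>\frac{\delta}{3}\alpha$ and is compatible with Assumption~\ref{assumptions} for $b'=-m$, so that one may legitimately invoke Theorem~\ref{thm-hansen-gen}; this is where the constraint $\alpha<\frac{3}{\delta}m$ and the pencil hypotheses with $b=a+2m(\gamma_m-i)$ enter, and it should be spelled out carefully but presents no real difficulty, following the pattern of \cite[Thm.~4.5]{DS19}.
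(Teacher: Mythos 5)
Your overall strategy --- feed the Kondratiev regularity of Theorem~\ref{thm-weighted-sob-reg} and the Sobolev regularity $u\in L_2([0,T],H^m(D))$ into the embedding of Theorem~\ref{thm-hansen-gen} with $k=0$, $q=p=2$ --- is exactly the paper's, and the treatment of $\tau$ and of the a priori estimate is fine. But there is a concrete indexing error at the very first step that, as written, breaks the argument in two places.

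The $l=-1$ case of Theorem~\ref{thm-weighted-sob-reg} states $\partial_{t^{l+1}}u=u\in L_2([0,T],\mathcal{K}^{2m(\gamma_m-l)}_{2,a+2m(\gamma_m-l)}(D))=L_2([0,T],\mathcal{K}^{2m(\gamma_m+1)}_{2,a+2m(\gamma_m+1)}(D))$, whereas you record only $u\in L_2([0,T],\mathcal{K}^{2m\gamma_m}_{2,a+2m\gamma_m}(D))$ (which is the space attached to $\partial_t u$, not to $u$). From this weaker membership you then claim
\begin{equation*}
\mathcal{K}^{2m\gamma_m}_{2,a+2m\gamma_m}(D)\hookrightarrow \mathcal{K}^{\gamma}_{2,a+2m\gamma_m}(D)
\end{equation*}
on the grounds that $2m\gamma_m\le\gamma-1<\gamma$. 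This is the monotonicity \eqref{kondratiev-emb} applied in the wrong direction: Kondratiev spaces only embed \emph{downward} in the smoothness index, so $\mathcal{K}^{m}_{p,a}\hookrightarrow\mathcal{K}^{m'}_{p,a}$ requires $m'<m$. In the extreme case $\gamma=2m$ one has $\gamma_m=0$, and your claim reads $\mathcal{K}^{0}_{2,a}\hookrightarrow\mathcal{K}^{2m}_{2,a}$, i.e.\ a weighted $L_2$ space embedding into a space with $2m$ derivatives, which is false. The same mis-indexing also undermines the hypothesis of Theorem~\ref{thm-hansen-gen}: your weight $a_{\mathrm{new}}=a+2m\gamma_m$ can be negative (again $\gamma=2m$, $a=-m$ gives $a_{\mathrm{new}}=-m$), so $\min(s,a_{\mathrm{new}})>\frac{\delta}{3}\alpha$ need not hold, and your parenthetical ``one checks $a+2m\gamma_m$ is large enough'' cannot be completed.

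The fix is immediate and restores the paper's proof: use the correct conclusion $u\in L_2([0,T],\mathcal{K}^{2m(\gamma_m+1)}_{2,a+2m(\gamma_m+1)}(D))$. Writing $\gamma-1=2mq+r$ with $0\le r<2m$ shows $2m(\gamma_m+1)=\gamma+(2m-1-r)\ge\gamma\ge\alpha$, so the downward embedding into $\mathcal{K}^{\alpha}_{2,a+2m(\gamma_m+1)}(D)$ is legitimate, and $a+2m(\gamma_m+1)\ge -m+2m=m$ gives $\min\bigl(m,a+2m(\gamma_m+1)\bigr)=m>\frac{\delta}{3}\alpha$ precisely under your constraint $\alpha<\frac{3}{\delta}m$. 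Everything else in your write-up then goes through unchanged.
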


\begin{proof}
According to Theorem  \ref{thm-weighted-sob-reg} by our assumptions we know  $ u\in L_2([0,T], {\mathcal{K}^{2m(\gamma_m+1)}_{2,a+2m(\gamma_m+1)}(D)})$. Together with  Theorem  \ref{thm-hansen-gen} {(choosing $k=0$)} we obtain  
\begin{align*}
u\ \in \ &L_2([0,T],{\mathcal{K}^{2m(\gamma_m+1)}_{2,a+2m(\gamma_m+1)}(D)})\cap {{H}}^{m,{\gamma_m+2}\ast}(D_T)\\
&\hookrightarrow  L_2([0,T],{\mathcal{K}^{2m(\gamma_m+1)}_{2,a+2m(\gamma_m+1)}(D)})\cap L_2([0,T],H^m(D))\\
&\hookrightarrow   L_2([0,T],{\mathcal{K}^{2m(\gamma_m+1)}_{2,a+2m(\gamma_m+1)}(D)})\cap L_2([0,T],B^m_{2,\infty}(D))\\
&\hookrightarrow   L_2([0,T],\mathcal{K}^{\alpha}_{2,a+2m(\gamma_m+1)}(D)\cap B^m_{2,\infty}(D))
\hookrightarrow    L_2([0,T],B^{\alpha}_{\tau,\infty}(D)),   
\end{align*}
{where in the third step we use the fact that  {$ 2m(\gamma_m+1)\geq 2m\left(\frac{\gamma}{2m}-1+1\right)=\gamma$ and choose $\alpha\leq \gamma$.}}  Moreover,   the {condition} on $a$ from Theorem \ref{thm-hansen-gen} yields  
$ 
{m=}{\min(m,a+2m(\gamma_m+1))}>\frac{\delta}{3}\alpha.   
$ 
{Therefore, the upper bound for $\alpha$ is  $ \alpha<{\min\left(\gamma,\frac{3}{\delta}m\right)}. $ }
Concerning the restriction on $\tau$, Theorem  \ref{thm-hansen-gen} with $\tau_0=2$ gives 
$
\frac 12<\frac{1}{\tau}<\frac{1}{\tau^{\ast}}=\frac{\alpha}{3}+\frac 12.
$ 
This completes the proof. 
\end{proof}

\begin{rem}[{\bf The parameter $a$}]\label{discuss_a}
We discuss the role of the weight parameter in our Kondratiev spaces: Note that on the one hand we require $a+2m(\gamma_m+1)>0$ in order to apply the embedding from  Theorem \ref{thm-hansen-gen}. Since we assume $a\in [-m,m]$ this is always true. On the other hand it should be expected that the derivatives of the solution $u$ have singularities near the boundary of the polyhedral domain. Thus, looking at the highest derivative of $u(t)\in \calk^{2m(\gamma_m+1)}_{2,a+2m(\gamma_m+1)}(D)$ we see that we require 
\[
\sum_{|\alpha|=2m(\gamma_m+1)}\int_D \rho^{-ap}(x)|\partial^{\alpha}u(t,x)|^p\ud x<\infty, 
\]
hence, if $a<0$ the derivatives of the solution $u$ might be unbounded near  the boundary of $D$. From this it follows that the range 
$\ 
{-m}<a<0 
\ $ 
is the most interesting for our considerations. 
\end{rem}

\remark{\label{gen-thm-parab-Besov}
The above theorem relies on the fact that  Problem \ref{prob_parab-1a} has a weak solution $u\in H^{m,{\gamma_m+2}\ast}(D_T)={H^{\gamma_m+1}}([0,T], \mathring{H}^m(D))\cap {H^{\gamma_m+2}}([0,T],H^{-m}(D))\hookrightarrow L_2([0,T], H^m(D))$, cf. Theorem \ref{Sob-reg-3}.  We strongly believe that (in good  agreement with the elliptic case) this result can be improved by studying the regularity of Problem \ref{prob_parab-1a}  in fractional Sobolev spaces $H^s(D)$. 
In this case (assuming that the weak solution of Problem \ref{prob_parab-1a} satisfies $u\in L_2([0,T],H^s(D))$ for some $s>0$) under the assumptions of Theorem \ref{thm-parab-Besov}, using Theorem  \ref{thm-weighted-sob-reg} and Theorem \ref{thm-hansen-gen} {(with $k=0$)}, we {would} obtain 
\beq\label{emb-bes-sob-kon}
 u\in  L_2([0,T],\mathcal{K}^{\alpha}_{2,a'}(D))\cap L_2([0,T],H^s(D))\hookrightarrow L_2([0,T],B^{\alpha}_{\tau,\infty}(D)),
\eeq
where $a'=a+ 2m(\gamma_m+1)\geq a+2m$ and again $\frac 12<\frac{1}{\tau}<\frac{\alpha}{3}+\frac 12$ but the restriction on $\alpha$ now reads as 
\beq\label{rest-a-zusatz}
\alpha<\frac{3}{\delta}\min(s, a').  
\eeq
For general Lipschitz domains $D\subset \real^3$  we expect {that the solution of Problem \ref{prob_parab-1a} (for $m=1$) is contained in $H^s(D)$ for} all $s<\frac 32$ (as  in the elliptic case, cf. \cite{JK95}). This would   lead to $\alpha<\frac 92$ when $\delta=1$. For convex domains it probably even  holds that $s=2$ (for the heat equation this was already proven in \cite{Wo07}). 
First results in this direction can be found in \cite{DS18}. 
}

Alternatively,  we combine  Theorem \ref{thm-weighted-sob-reg-2} (Kondratiev regularity B) and Theorem \ref{thm-hansen-gen}. This leads to  the following  regularity result in Besov spaces.

{
\begin{theorem}[{\bf Parabolic Besov regularity B}]\label{thm-parab-Besov-2}
Let $D$ be a bounded polyhedral domain in $\real^3$. Let $\gamma\in \nat $ with  ${\gamma\geq 2m}$. Moreover, let  $a\in \real$ with   ${a\in [-m,m]}$.  Assume that  the right-hand side $f$ of Problem \ref{prob_parab-1a} satisfies 
\begin{itemize}
\item[(i)] $f\in \bigcap_{l=0}^{\infty}H^l([0,T],L_2(D)\cap \calk^{\gamma-2m}_{2,a}(D))$. 
\item[(ii)] $\partial_{t^l} f(x,0)=0$, \quad  $l\in \nat_0$. 
\end{itemize}
Furthermore, let  Assumption \ref{assumptions}  hold for weight parameters $b=a$ and  $b'=-m$.  
Then for the weak solution $\bigcap_{l=0}^{\infty}u\in {{H}}^{m,{l+1}\ast}(D_T)$ of Problem \ref{prob_parab-1a}, we have 
\begin{equation}\label{parab-Besov2}
 u\in L_{2}([0,T],B^{{\alpha}}_{\tau,\infty}(D)) \quad \text{for all}\quad {0<{\alpha<\min\left(\gamma,\frac{3}{\delta}m\right)}}, 
\end{equation}
where $\frac 12 <\frac{1}{\tau}<\frac{\alpha}{3}+\frac 12$ and $\delta$ denotes the dimension of the singular set of $D$. 
In particular, for any ${\alpha}$ satisfying \eqref{parab-Besov2} and $\tau$ as above, we have the a priori estimate 
\begin{align*}
\|u|& L_{2}([0,T],B^{{\alpha}}_{\tau,\infty}(D))\|
\lesssim  
\sum_{k=0}^{\gamma-2m}\|\partial_{t^k} f|{L_2([0,T],{\mathcal{K}^{\gamma-2m}_{2,a}(D))}}\|+\sum_{k=0}^{(\gamma-2m)+1}\|\partial_{t^k} f|{L_2(D_T)}\|. 
\end{align*}
\end{theorem}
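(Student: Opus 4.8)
The plan is to mirror the proof of Theorem \ref{thm-parab-Besov}, replacing the input from Kondratiev regularity A by that from Kondratiev regularity B. First I would record that, under hypotheses (i) and (ii) together with Assumption \ref{assumptions} for the weight parameters $b=a$ and $b'=-m$, Theorem \ref{thm-weighted-sob-reg-2} applied with $\eta=\gamma$ and $l=0$ yields a weak solution $u\in\bigcap_{l=0}^{\infty}H^{m,l+1\ast}(D_T)$ of Problem \ref{prob_parab-1a} (existence coming from Theorem \ref{Sob-reg-3} applied for each $l$) satisfying
\[
u\in L_2([0,T],\calk^{\gamma}_{2,a+2m}(D))
\]
together with the a priori estimate stated there.

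Next I would use the unweighted part of the regularity, namely $u\in H^{m,1\ast}(D_T)\hookrightarrow L_2([0,T],\mathring{H}^m(D))$, and combine it with the classical embedding $H^m(D)\hookrightarrow B^m_{2,\infty}(D)$ to obtain $u\in L_2([0,T],B^m_{2,\infty}(D))$. Fixing any $\alpha$ with $0<\alpha<\min(\gamma,\tfrac{3}{\delta}m)$ and invoking the monotonicity \eqref{kondratiev-emb} of the Kondratiev scale, $\calk^{\gamma}_{2,a+2m}(D)\hookrightarrow\calk^{\alpha}_{2,a+2m}(D)$, I would then apply Theorem \ref{thm-hansen-gen} with $k=0$, $q=p=2$, smoothness parameter $\alpha$, weight parameter $a+2m$ and Sobolev exponent $s=m$. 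The hypothesis $\min(s,a+2m)>\tfrac{\delta}{3}\alpha$ of that theorem holds because $a\geq -m$ forces $a+2m\geq m$, so that $\min(m,a+2m)=m$, while $\alpha<\tfrac{3}{\delta}m$ by the choice of $\alpha$. Hence
\[
u\in L_2([0,T],\calk^{\alpha}_{2,a+2m}(D))\cap L_2([0,T],B^m_{2,\infty}(D))\hookrightarrow L_2([0,T],B^{\alpha}_{\tau,\infty}(D))
\]
for all $\tau_\ast<\tau<\tau_0$ with $\tfrac{1}{\tau_\ast}=\tfrac{\alpha}{3}+\tfrac12$; as in the proof of Theorem \ref{thm-parab-Besov} one may take $\tau_0=2$, which gives the stated range $\tfrac12<\tfrac{1}{\tau}<\tfrac{\alpha}{3}+\tfrac12$. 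Finally, chaining the (finite) embedding constants with the a priori bound from Theorem \ref{thm-weighted-sob-reg-2} produces the claimed estimate.

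The argument is essentially routine once these two ingredients are assembled; the only point requiring care is the bookkeeping of the parameters in Theorem \ref{thm-hansen-gen}. In particular, one should first downgrade the Kondratiev smoothness from $\gamma$ to an arbitrary $\alpha\le\gamma$ via \eqref{kondratiev-emb} before applying the embedding, so that the admissible range of $\alpha$ becomes $0<\alpha<\min(\gamma,\tfrac{3}{\delta}m)$ rather than being restricted by $\gamma$ alone, and one should verify that $\tau_0$ can indeed be taken equal to $p=2$ in the present setting. I do not anticipate any genuine obstacle beyond this.
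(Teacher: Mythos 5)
Your proposal is correct and follows essentially the same route as the paper's own proof: apply Theorem \ref{thm-weighted-sob-reg-2} (with $\eta=\gamma$) to get $u\in L_2([0,T],\calk^{\gamma}_{2,a+2m}(D))$, combine this with $u\in H^{m,1\ast}(D_T)\hookrightarrow L_2([0,T],H^m(D))\hookrightarrow L_2([0,T],B^m_{2,\infty}(D))$, downgrade the Kondratiev smoothness to $\alpha\le\gamma$ via \eqref{kondratiev-emb}, and invoke Theorem \ref{thm-hansen-gen} with $\tau_0=2$, checking $\min(m,a+2m)=m>\frac{\delta}{3}\alpha$. The parameter bookkeeping you flag as the delicate point is handled exactly as in the paper.
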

}

\begin{proof}
According to Theorem  \ref{thm-weighted-sob-reg-2} by our assumptions we know  
$u\in L_2([0,T], {\mathcal{K}^{\gamma}_{2,a+2m}(D)})$. Together with  Theorem  \ref{thm-hansen-gen} {(choosing $k=0$)} we obtain  
\begin{align*}
u\in &L_2([0,T],{\mathcal{K}^{\gamma}_{2,a+2m}(D)})\cap {{H}}^{m,{1}\ast}(D_T)\\
&\hookrightarrow  L_2([0,T],\mathcal{K}^{\gamma}_{2,a+2m}(D))\cap L_2([0,T],H^m(D))\\
&\hookrightarrow   L_2([0,T],\mathcal{K}^{\gamma}_{2,a+2m}(D))\cap L_2([0,T],B^m_{2,\infty}(D))\\
&\hookrightarrow   L_2([0,T],\mathcal{K}^{\alpha}_{2,a+2m}(D)\cap B^m_{2,\infty}(D))
\hookrightarrow    L_2([0,T],B^{{\alpha}}_{\tau,\infty}(D)),   
\end{align*}
 where {$\alpha \leq \gamma$ in the second to last line. Moreover, } the condition on the parameter '$a$' from Theorem \ref{thm-hansen-gen} yields  
$
m=\min(m,a+2m)>\frac{\delta}{3}\alpha.   
$ 
{Therefore, the upper bound for ${\alpha}$ is  $ {\alpha<\min\left(\gamma,\frac{3}{\delta}m\right)}. $ }
Concerning the restriction on $\tau$, Theorem  \ref{thm-hansen-gen} with $\tau_0=2$ gives 
$
\frac 12<\frac{1}{\tau}<\frac{1}{\tau^{\ast}}=\frac{{\alpha}}{3}+\frac 12.
$ 
This finishes the proof. 
\end{proof}

\remark{
 It might not be obvious at first glance  that Assumption \ref{assumptions} is satisfied with the parameter restrictions  in Theorems \ref{thm-parab-Besov} and \ref{thm-parab-Besov-2}. For a  discussion on this subject we refer to \cite[Rem.~3.8, Ex~4.8]{DS19}, where this matter was discussed in detail and exemplary illustrated for the heat equation. We do not want to repeat the arguments here. 
}

\subsection{Besov regularity of Problem II}\label{Subsect-4.2}

Concerning  the  Besov regularity  of  Problem \ref{prob_nonlin}, we proceed in the same way as before for Problem \ref{prob_parab-1a}: Combining  Theorem \ref{nonlin-B-reg1} (Nonlinear Sobolev and Kondratiev regularity) with the embeddings from Theorem \ref{thm-hansen-gen}  we derive the following result.

\begin{theorem}[{\bf Nonlinear Besov regularity}]\label{nonlin-B-reg3}
Let the assumptions of Theorems \ref{nonlin-B-reg1} and  \ref{thm-weighted-sob-reg} be satisfied.  
In particular, as in Theorem \ref{nonlin-B-reg1} for $\eta:=\|f|\widetilde{\mathcal{D}}\|$ and $r_0>1$,  we choose $\varepsilon >0$ so small that 
\begin{equation}\label{nonlin-cond1}
{
\eta^{2(M-1)} \|\tilde{L}^{-1}\|^{2M-1}\leq \frac{1}{{c}\varepsilon M}(r_0-1)\left(\frac{1}{r_0}\right)^{2M-1}, \qquad \text{if}\quad  r_0\|\tilde{L}^{-1}\|\eta>1,
}
\end{equation}
and 
\begin{equation}\label{nonlin-cond2}
\|\tilde{L}^{-1}\|<\frac{r_0-1}{r_0}\left(\frac{1}{c\varepsilon M}\right), \qquad \text{if}\quad  r_0\|\tilde{L}^{-1}\|\eta<1.
\end{equation}
\begin{figure}[H]
\begin{minipage}{0.55\textwidth}
{\em Then there exists a solution $u$ of Problem \ref{prob_nonlin}, which  satisfies 
 $ u\in B_0\subset B$, 
$$B:=L_2([0,T],B^{\alpha}_{\tau,\infty}(D)), 
$$ 
{\text{for all} $0<\alpha<\min\left(\frac{3}{\delta}m,\gamma\right)$}, where $\delta$ denotes the dimension of the singular set of $D$, $\frac 12<\frac{1}{\tau}<\frac{\alpha}{3}+\frac 12$,  and  $B_0$ is a small ball  around $\tilde{L}^{-1}f$ (the solution of the corresponding linear problem) with radius $R={C\tilde{C}}(r_0-1)\eta \|\tilde{L}^{-1}\|$. }
\end{minipage}\hfill \begin{minipage}{0.38\textwidth}
\includegraphics[width=7.2cm]{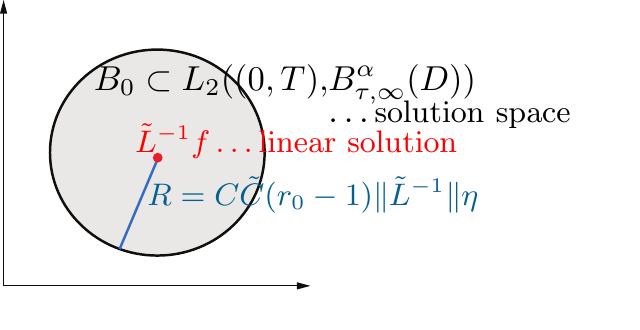}
\caption[Nonlinear solution in  ball]{Nonlinear solution in $B_0$}
\end{minipage}
\end{figure}
\end{theorem}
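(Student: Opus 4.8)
The plan is to combine the existence statement of Theorem~\ref{nonlin-B-reg1} with the Kondratiev--Besov embedding of Theorem~\ref{thm-hansen-gen}, mimicking the proof of Theorem~\ref{thm-parab-Besov}. First I would invoke Theorem~\ref{nonlin-B-reg1}: under the present hypotheses and the smallness conditions \eqref{nonlin-cond1}, \eqref{nonlin-cond2} on $\varepsilon$, it produces a solution $u\in S_0\subset S$ of Problem~\ref{prob_nonlin} which is unique in $S_0$, where $S=S_1\cap S_2$ and $S_0=B_R(\tilde{L}^{-1}f)$ is the ball in $S$ of radius $R=(r_0-1)\eta\|\tilde{L}^{-1}\|$ centred at the linear solution $\tilde{L}^{-1}f$. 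It therefore remains only to exhibit a continuous embedding $S\hookrightarrow B:=L_2([0,T],B^{\alpha}_{\tau,\infty}(D))$ for the claimed range of $\alpha,\tau$; if $C\tilde{C}$ denotes the product of the norms of the embeddings involved, then the image of $S_0$ lies in the ball $B_0:=B_{C\tilde{C}R}(\tilde{L}^{-1}f)$ in $B$, and in particular $u\in B_0$, which is the assertion. (Observe that $B_0$ is the image ball and need not be contained in $S$, so only existence of a solution in $B_0$, not uniqueness there, is claimed.)

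For the embedding I would argue exactly as in the proof of Theorem~\ref{thm-parab-Besov}. The $k=0$ term in the intersection defining $S_1$ gives $S\hookrightarrow L_2([0,T],\mathcal{K}^{2m(\gamma_m+1)}_{2,a+2m(\gamma_m+1)}(D))$, while $S_2=H^{m,\gamma_m+2\ast}(D_T)\hookrightarrow L_2([0,T],\mathring{H}^m(D))\hookrightarrow L_2([0,T],B^m_{2,\infty}(D))$, hence
\[
S\hookrightarrow L_2\bigl([0,T],\mathcal{K}^{2m(\gamma_m+1)}_{2,a+2m(\gamma_m+1)}(D)\bigr)\cap L_2\bigl([0,T],B^m_{2,\infty}(D)\bigr).
\]
Since $2m(\gamma_m+1)\geq\gamma$ by the definition $\gamma_m=\left[\frac{\gamma-1}{2m}\right]$, the Kondratiev monotonicity \eqref{kondratiev-emb} allows me to lower the smoothness index to any $\alpha\leq\gamma$, and then Theorem~\ref{thm-hansen-gen} applies with $k=0$, $p=2$, $\tau_0=2$, Besov exponent $s=m$ and target smoothness $\alpha$. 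Its admissibility condition $\min\bigl(m,\,a+2m(\gamma_m+1)\bigr)>\frac{\delta}{3}\alpha$ collapses to $\alpha<\frac{3}{\delta}m$, since $a\geq-m$ (together with $\gamma_m\geq1$) forces $a+2m(\gamma_m+1)\geq m$; combined with $\alpha\leq\gamma$ this yields the full range $0<\alpha<\min(\gamma,\frac{3}{\delta}m)$, while the admissible $\tau$ satisfy $\frac12<\frac1\tau<\frac1{\tau_\ast}=\frac\alpha3+\frac12$, as claimed. This gives $S\hookrightarrow B$, and transporting the ball $S_0$ through it finishes the argument, the radius of $B_0$ being $C\tilde{C}R=C\tilde{C}(r_0-1)\eta\|\tilde{L}^{-1}\|$.

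I do not anticipate a genuine obstacle: all the analytically delicate work --- the Banach fixed point argument, the pointwise-multiplier and Fa\`a di Bruno estimates, and the a priori bound \eqref{est-ab} --- is already packaged inside Theorem~\ref{nonlin-B-reg1}, and the Besov step here is a verbatim copy of the corresponding step for the linear Problem~\ref{prob_parab-1a}. The only items demanding a moment's care are bookkeeping: checking $2m(\gamma_m+1)\geq\gamma$ so that the Kondratiev monotonicity step is legitimate, checking that $\gamma_m\geq1$ (assumed in Theorem~\ref{nonlin-B-reg1}) makes the weight $a+2m(\gamma_m+1)\geq m$ so that the admissibility condition in Theorem~\ref{thm-hansen-gen} is governed by $m$ rather than by $a$, and tracking the embedding constants so that the radius of $B_0$ comes out as stated.
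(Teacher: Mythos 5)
Your proposal is correct and follows essentially the same route as the paper: invoke Theorem~\ref{nonlin-B-reg1} for the fixed point $u\in S_0\subset S$, embed $S$ into $L_2([0,T],\mathcal{K}^{2m(\gamma_m+1)}_{2,a+2m(\gamma_m+1)}(D)\cap H^m(D))$ and then into $B$ via Theorem~\ref{thm-hansen-gen} with $k=0$, and transport the ball $S_0$ through the embedding to obtain the radius $C\tilde{C}(r_0-1)\eta\|\tilde{L}^{-1}\|$. Your parameter bookkeeping ($2m(\gamma_m+1)\geq\gamma$, $a+2m(\gamma_m+1)\geq m$, hence $\alpha<\min(\gamma,\frac{3}{\delta}m)$ and $\frac12<\frac1\tau<\frac\alpha3+\frac12$) matches the paper's computation exactly.
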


\begin{proof}
This is a consequence of the regularity results in Kondratiev and Sobolev spaces from Theorem  \ref{nonlin-B-reg1}. To be more precise, Theorem  \ref{nonlin-B-reg1} establishes the existence of a fixed point $u$ in  
\begin{align*}
S_0\subset S
&{:= \bigcap_{k=0}^{\gamma_m+1}H^{k}([0,T],\mathcal{K}^{2m(\gamma_m-(k-1))}_{2,a+2m(\gamma_m-(k-1))}(D)) \cap {H}^{m,\gamma_m+2\ast}(D_T)}\\
&\hookrightarrow  \bigcap_{k=0}^{\gamma_m+1}H^{k}([0,T],\mathcal{K}^{2m(\gamma_m-(k-1))}_{2,a+2m(\gamma_m-(k-1))}(D)) \\
&\qquad \qquad \cap H^{\gamma_m+1}([0,T],{{H}^m}(D))\cap H^{\gamma_{m}+2}([0,T], H^{-m}(D)) \\
& \hookrightarrow L_2([0,T], \mathcal{K}^{2m(\gamma_m+1)}_{2,a+2m(\gamma_m+1)}(K)\cap H^m(D))=:\tilde{S}. 
\end{align*}
This together with the embedding results for Besov spaces from Theorem  \ref{thm-hansen-gen} {(choosing $k=0$)} completes the proof, {in particular, we calculate for the solution (cf. the proof of Theorem \ref{thm-parab-Besov})
\begin{align}
\| u&-\tilde{L}^{-1}f |L_2([0,T],B^{\alpha}_{\tau,\infty}(D))\|\notag \\
&\leq  C \| u- \tilde{L}^{-1}f| L_2([0,T], \mathcal{K}^{2m(\gamma_m+1)}_{2,a+2m(\gamma_m+1)}(D)\cap H^m(D))\|\notag\\
&  = C\|u-\tilde{L}^{-1}f| \tilde{S}\|
\leq  C\tilde{C}\|u-\tilde{L}^{-1}f| S\|
\leq  C\tilde{C}(r_0-1)\eta \|\tilde{L}^{-1}\|. \label{est-abc}
\end{align} Furthermore,  it can be seen from \eqref{est-abc} that new constants $C$ and $\tilde{C}$ appear when considering the radius $R$ around the linear solution where the problem can be solved compared to Theorem \ref{nonlin-B-reg1}.}
\end{proof}

\remark{A few words concerning the parameters appearing in Theorem \ref{nonlin-B-reg3} (and also Theorem  \ref{nonlin-B-reg1}) seem to be in order. 
Usually, the operator norm  $\|\tilde{L}^{-1}\|$ as well as  $\varepsilon$ are fixed; but we can change $\eta$ and $r_0$ according to our needs. From this we deduce that by choosing $\eta$ small enough the  {condition \eqref{nonlin-cond2} can always be satisfied.} Moreover, it is easy to see that the smaller the nonlinear perturbation $\varepsilon>0$ is, the larger we can choose the radius $R$ of the ball $B_0$ where the  solution of Problem \ref{prob_nonlin} is unique. 
}

\subsection{H\"older-Besov regularity of Problem I}
\label{sect-spacetime}

So far we have not exploited the fact that Theorem \ref{thm-weighted-sob-reg} (Kondratiev regularity A)  not only provides regularity {properties}  of the  solution $u$ of Problem \ref{prob_parab-1a} but also of  {its} partial derivatives $\partial_{t^k} u$. We use this fact in combination  with  Theorem \ref{thm-sob-emb}  in order to obtain some  mixed H\"older-Besov regularity results on the whole space-time cylinder $D_T$. \\ 
For parabolic SPDEs, results in this direction have been obtained in \cite{CKLL13}. However, for SPDEs, the time regularity is limited in nature. This is caused by the non-smooth character of the driving processes. Typically, H\"older regularity $\mathcal{C}^{0,\beta}$ can be obtained, but not more. In contrast to this, it is well-known that deterministic parabolic PDEs are smoothing in time. Therefore,   in the deterministic case considered here, higher regularity results in time can be obtained {compared to} the probabilistic setting.   \\

\begin{theorem}[{\bf H\"older-Besov regularity}]\label{Hoelder-Besov-reg}
Let $D$ be a bounded polyhedral domain in $\real^3$. Moreover,  let $\gamma\in \nat $ with  $\gamma\geq 4m+1$ and put $\gamma_m:=\left[ \frac{\gamma-1}{2m}\right]$. Furthermore, let  $a\in \real$ with   ${a\in [-m,m]}$.  Assume that  the right-hand side $f$ of Problem \ref{prob_parab-1a} satisfies 
\begin{itemize}
\item[(i)] $\partial_{t^k} f\in L_2(D_T)\cap L_2([0,T],\mathcal{K}^{2m(\gamma_m-k)}_{2,a+2m(\gamma_m-k)}(D))$, \ $k=0,\ldots, \gamma_m$, \quad 
$\partial_{t^{\gamma_m+1}} f\in L_2(D_T)$.   
\item[(ii)] $\partial_{t^k} f(x,0)=0$, \quad  $k=0,1,\ldots, {\gamma_m}.$
\end{itemize}
Let  Assumption \ref{assumptions}  hold for weight parameters $b=a+2m(\gamma_m-i)$, where $i=0,\ldots, \gamma_m$  and  $b'=-m$.
Then for the  solution $u\in {H}_2^{m,\gamma_m+2\ast}(D_T)$ of Problem \ref{prob_parab-1a}, we have 
$$
u\in \mathcal{C}^{{\gamma_m-2},\frac 12}([0,T],B^{\eta}_{\tau,\infty}(D)) \quad \text{for all}\quad 0<\eta<\min\bigg(\frac{3}{\delta}, 4\bigg)m, 
$$  
where $\frac 12 <\frac{1}{\tau}<\frac{\eta}{3}+\frac 12$ and $\delta$ denotes the dimension of the singular set of $D$.  In particular, we have the a priori estimate 
\begin{align*}
\|u&|\mathcal{C}^{{\gamma_m-2},\frac 12}([0,T],B^{\eta}_{\tau,\infty}(D))\|
\lesssim  {\sum_{k=0}^{\gamma_m}\|\partial_{t^k} f|L_2([0,T], \mathcal{K}^{2m(\gamma_m-k)}_{2,a+2m(\gamma_m-k)}(D))\|+\sum_{k=0}^{\gamma_m+1}\|\partial_{t^k} f|{L_2(D_T)}\|},
\end{align*}
where the constant is independent of $u$ and $f$. 
\end{theorem}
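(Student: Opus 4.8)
The plan is to upgrade the spatial regularity of the \emph{lower order} time derivatives $\partial_{t^k}u$ of the solution — which Theorems \ref{thm-weighted-sob-reg} and \ref{Sob-reg-3} already deliver in the weighted Kondratiev and in the unweighted Sobolev scales — to spatial Besov regularity through the embedding of Theorem \ref{thm-hansen-gen}, applied separately to each such derivative, and then to trade the remaining time derivatives for Hölder continuity in $t$ by invoking the generalized Sobolev embedding Theorem \ref{thm-sob-emb}. The hypotheses $\gamma\ge 4m+1$ and $a\in[-m,m]$ enter only to secure $\gamma_m\ge 2$ and a sufficiently large Kondratiev weight.

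First I would record what is available. Since assumptions (i), (ii) are precisely those of Theorem \ref{thm-weighted-sob-reg}, that theorem gives (writing $l=k-1$ in its conclusion)
\[
\partial_{t^k}u\in L_2\big([0,T],\mathcal{K}^{2m(\gamma_m-k+1)}_{2,\,a+2m(\gamma_m-k+1)}(D)\big),\qquad k=0,\ldots,\gamma_m+1,
\]
together with the a priori bound \eqref{weighted-sobolev-est}. Moreover, by Remark \ref{rem-restr} the same assumptions feed Theorem \ref{Sob-reg-3} with $l=\gamma_m+1$, whence $\partial_{t^k}u\in H^{m,1\ast}(D_T)\hookrightarrow L_2([0,T],\mathring H^m(D))\hookrightarrow L_2([0,T],B^m_{2,\infty}(D))$ for $k=0,\ldots,\gamma_m+1$.

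Now fix $k\in\{0,\ldots,\gamma_m-1\}$. Because $a\ge -m$, the weight satisfies $a+2m(\gamma_m-k+1)\ge a+4m\ge 3m>m$, so $\min\big(m,a+2m(\gamma_m-k+1)\big)=m$, and moreover $2m(\gamma_m-k+1)\ge 4m$. Feeding the two displayed facts into Theorem \ref{thm-hansen-gen} (with time-Sobolev exponent $0$, i.e.\ $W^0_2=L_2$, with $p=q=2$, $\tau_0=2$, Besov input smoothness $s=m$, and Kondratiev/target smoothness $\eta$), exactly as in the proof of Theorem \ref{thm-parab-Besov} — which also handles non-integer $\eta$ via the monotonicity \eqref{kondratiev-emb} — one obtains
\[
\partial_{t^k}u\in L_2\big([0,T],\mathcal{K}^{\eta}_{2,a+2m(\gamma_m-k+1)}(D)\big)\cap L_2\big([0,T],B^m_{2,\infty}(D)\big)\hookrightarrow L_2\big([0,T],B^{\eta}_{\tau,\infty}(D)\big)
\]
for every $0<\eta<\min(3/\delta,4)m$ and every $\tfrac12<\tfrac1\tau<\tfrac{\eta}{3}+\tfrac12$; here $\eta<3m/\delta$ is the condition $\min(s,a)>\tfrac{\delta}{3}\eta$ and $\eta<4m$ is $\eta\le 2m(\gamma_m-k+1)$, valid for all $k\le\gamma_m-1$. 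Since one and the same pair $(\eta,\tau)$ serves all $k=0,\ldots,\gamma_m-1$ at once, this means $u\in W^{\gamma_m-1}_2([0,T],B^{\eta}_{\tau,\infty}(D))$, and $\gamma\ge 4m+1$ guarantees $\gamma_m-1\ge 1$, so Theorem \ref{thm-sob-emb} with $p=2$ gives $W^{\gamma_m-1}_2([0,T],B^{\eta}_{\tau,\infty}(D))\hookrightarrow\mathcal{C}^{\gamma_m-2,\frac12}([0,T],B^{\eta}_{\tau,\infty}(D))$, which is the claim; the a priori estimate follows by composing the embedding norms with \eqref{weighted-sobolev-est}.

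The main obstacle is the bookkeeping dictated by the fact that the Kondratiev smoothness of $\partial_{t^k}u$ \emph{decreases} in $k$: one must stop at $k=\gamma_m-1$ — so that the worst spatial smoothness among $\partial_{t^0}u,\ldots,\partial_{t^{\gamma_m-1}}u$ is still $4m$, whereas spending one more time derivative would drop the admissible $\eta$ down to $2m$ — and one must check that the weight condition $\min(s,a)>\tfrac{\delta}{3}\eta$ in Theorem \ref{thm-hansen-gen} holds uniformly in $k$, so that a single target space $B^{\eta}_{\tau,\infty}(D)$ accommodates all the needed time derivatives; this uniformity is exactly what legitimises the $W^{\gamma_m-1}_2$-in-time step and hence the Hölder conclusion. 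The remaining technical wrinkle, passing to non-integer $\eta$ in the Kondratiev–Besov embedding, is dealt with just as in the proofs of Theorems \ref{thm-parab-Besov} and \ref{thm-parab-Besov-2}.
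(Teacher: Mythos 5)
Your argument assembles exactly the same ingredients as the paper's proof, with the same numerology: Theorems \ref{thm-weighted-sob-reg} and \ref{Sob-reg-3} for the Kondratiev and Sobolev regularity of the time derivatives $\partial_{t^k}u$; the observation that one must stop at the derivative order $\gamma_m-1$ (the paper's $k\le\gamma_m-2$) so that the worst spatial Kondratiev smoothness encountered is still $4m$, which is what produces the bound $\eta<\min\bigl(\tfrac{3}{\delta},4\bigr)m$; Theorem \ref{thm-hansen-gen} for the passage to Besov spaces; and Theorem \ref{thm-sob-emb} for trading one time derivative for H\"older continuity. The only difference is the \emph{order} in which the two embeddings are applied, and at one point that order matters.

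You apply the spatial embedding first, obtaining $\partial_{t^k}u\in L_2([0,T],B^{\eta}_{\tau,\infty}(D))$ for $k=0,\dots,\gamma_m-1$, and then invoke Theorem \ref{thm-sob-emb} with $X=B^{\eta}_{\tau,\infty}(D)$. But Theorem \ref{thm-sob-emb} is stated for Banach spaces $X$, while the admissible range $\frac12<\frac1\tau<\frac{\eta}{3}+\frac12$ contains values $\tau<1$ as soon as $\eta>\frac32$ --- well inside the claimed range $0<\eta<\min\bigl(\tfrac{3}{\delta},4\bigr)m$ --- and for such $\tau$ the space $B^{\eta}_{\tau,\infty}(D)$ is only quasi-Banach. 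The vector-valued Sobolev embedding into H\"older classes rests on the inequality $\|u(t)-u(s)|X\|\le\int_s^t\|u'(r)|X\|\,\ud r$, i.e.\ on the triangle inequality for Bochner integrals, which fails in non-locally-convex targets; even the definition of $W^{\gamma_m-1}_2([0,T],B^{\eta}_{\tau,\infty}(D))$ becomes delicate there. The paper sidesteps this by reversing the two steps: it first embeds $H^{\gamma_m-1}([0,T],\mathcal{K}^{4m}_{2,a+4m}(D)\cap H^m(D))$ into $\mathcal{C}^{\gamma_m-2,\frac12}$ of that intersection space, which is Banach (in fact built from $L_2$-type spaces), and only afterwards applies the spatial embedding $\mathcal{K}^{\eta}_{2,a+4m}(D)\cap B^m_{2,\infty}(D)\hookrightarrow B^{\eta}_{\tau,\infty}(D)$, which acts for each fixed $t$ and on differences $u(t)-u(s)$ and therefore tolerates a quasi-Banach target. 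Once you swap these two steps your proof is correct and coincides with the one in the paper.
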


\begin{proof} Theorems \ref{thm-weighted-sob-reg} and  \ref{Sob-reg-3} show together with Theorems  \ref{thm-hansen-gen}  and  \ref{thm-sob-emb}, that under the given assumptions on the initial data $f$, we have {for $k\leq  \gamma_m-2$, }
\begin{eqnarray*}
u &\in & {H^{k+1}([0,T], {\mathcal{K}^{2m(\gamma_m-k)}_{2,a+2m(\gamma_m-k)}(D)})}\cap H^{{\gamma_m}+1}([0,T], H^m(D))\\
&\hookrightarrow & {H^{k+1}([0,T], {\mathcal{K}^{2m(\gamma_m-k)}_{2,a+2m(\gamma_m-k)}(K)}\cap H^m(D))}\\
&\hookrightarrow&  {\mathcal{C}^{{k},\frac 12}([0,T], {\mathcal{K}^{2m(\gamma_m-k)}_{2,a+2m(\gamma_m-k)}(D)}\cap H^m(D)) }\\
&\hookrightarrow &{\mathcal{C}^{{k},\frac 12}([0,T], {\mathcal{K}^{\eta}_{2,a+2m(\gamma_m-k)}(D)}\cap H^m(D)) }
\hookrightarrow  \mathcal{C}^{{k},\frac 12}([0,T], B^{\eta}_{\tau,\infty}(D)), 
\end{eqnarray*}
{where in the third step we require {$\eta\leq 2m(\gamma_m-k)$} and by Theorem \ref{thm-hansen-gen} we get the additional restriction  
$
m=\min(m,a+2m(\gamma_m-k))\geq \frac{\delta}{3}\eta, \  \text{i.e.}, \ \eta <\frac{3}{\delta}m. 
$
Therefore, the upper bound on $\eta$ reads as $\eta<\min\left(\frac{3}{\delta}m, 2m(\gamma_m-k)\right)$ since $k\leq \gamma_m-2$, which for $k=\gamma_m-2$ yields $\eta<\min\left(\frac{3}{\delta}, 4\right)m$.}   
\end{proof}

{
\rem{\hfill 
\begin{itemize}
\item[(i)] For {$\gamma\geq 2m+1$ and} $k=\gamma_m-1$ we have $\eta\leq \min\left(\frac{3}{\delta},2\right)m$ in the theorem above.  For {$\gamma\geq 2m$ and} $k=\gamma_m$ we get $\eta=0$. 
\item[(ii)] From the proof of Theorem \ref{Hoelder-Besov-reg} above it can be seen that the solution satisfies 
\mbox{$u\in {\mathcal{C}^{{k},\frac 12}([0,T], {\mathcal{K}^{2m(\gamma_m-k)}_{2,a+2m(\gamma_m-k)}(D)}), }$}  
implying that for high regularity in time, which is displayed by the parameter $k$, we have less spatial regularity in terms of $2m(\gamma_m-k)$. 
\end{itemize}
}
}

\appendix

\section{Appendix}
\subsection{Preliminaries}
\label{app-not}

We collect some  notation used throughout the paper. As usual,  we denote by $\nat$ the set of all natural numbers, $\nat_0=\mathbb N\cup\{0\}$, and 
$\real^d$, $d\in\nat$,  the $d$-dimensional real Euclidean space with $|x|$, for $x\in\real^d$, denoting the Euclidean norm of $x$. 
By $\mathbb{Z}^d$ we denote the lattice of all points in $\real^d$ with integer components. 
For $a\in\real$, let  
$[a]$ denote its integer part. \\
Moreover,  $c$ stands for a generic positive constant which is independent of the main parameters, but its value may change from line to line. 
The expression $A\lesssim B$ means that $ A \leq c\,B$. If $A \lesssim
B$ and $B\lesssim A$, then we write $A \sim B$.  

Given two quasi-Banach spaces $X$ and $Y$, we write $X\hookrightarrow Y$ if $X\subset Y$ and the natural embedding is bounded. By $\supp f$ we denote the support of the function $f$. For a domain $\Omega\subset \real^d$ and $r\in \nat\cup \{\infty\}$ we write $C^r(\Omega)$ for the space of all {real}-valued $r$-times continuously differentiable functions, 
whereas $C(\Omega)$ is the space of bounded uniformly continuous functions, and  $\mathcal{D}(\Omega)$ for the set of test functions, i.e., the collection of all infinitely differentiable functions with  {compact support contained in $\Omega$. Moreover,  $L^1_{\text{loc}}(\Omega)$ denotes the space of locally integrable functions on $\Omega$.} \\
For  a multi-index  $\alpha = (\alpha_1, \ldots,\alpha_d)\in \nat_0^d$ with  $|\alpha| := \alpha_1+\ldots+ \alpha_d=r$, $r\in \nat_0$,  and an $r$-times differentiable function $u:\Omega\rightarrow \real$, we write 
\[
D^{(\alpha)}u=\frac{\partial^{|\alpha|}}{\partial x_1^{\alpha_1}\dots \partial x_d^{\alpha_d}} u
\]
for the corresponding classical partial derivative as well as $u^{(k)}:=D^{(k)}u$ in the one-dimensional case. Hence, the space $C^r(\Omega)$ is normed by 
\[
\| u| C^r(\Omega)\|:=\max_{|\alpha|\leq r}\sup_{x\in \Omega}|D^{(\alpha)}u(x)|<\infty. 
\]
Moreover, $\mathcal{S}(\real^d)$ denotes the Schwartz space of rapidly decreasing functions. The set of distributions on $\Omega$ will be denoted by $\mathcal{D}'(\Omega)$, whereas $\mathcal{S}'(\real^d)$ denotes the set of tempered distributions on $\real^d$. The terms {\em distribution} and {\em generalized function} will be used synonymously. For the application of a distribution $u\in \mathcal{D}'(\Omega)$ to a test function $\varphi\in \mathcal{D}(\Omega)$ we write $(u,\varphi)$. The same notation will be used if $u\in \mathcal{S}'(\real^d)$ and $\varphi\in \mathcal{S}(\real^d)$ (and also for the inner product in $L_2(\Omega)$).  For $u\in \mathcal{D}'(\Omega)$  and a multi-index $\alpha = (\alpha_1, \ldots,\alpha_d)\in \nat_0^d$, we write $D^{\alpha}u$ for the $\alpha$-th {\em generalized} or {\em distributional derivative} of $u$ with respect to $x=(x_1,\ldots, x_d)\in \Omega$, i.e., $D^{\alpha}u$ is a distribution on $\Omega$, uniquely determined by the formula   
\[
(D^{\alpha}u,\varphi):=(-1)^{|\alpha|}(u,D^{(\alpha)}\varphi), \qquad \varphi \in \mathcal{D}(\Omega). 
\]
{In particular, if  $u\in L^1_{\text{loc}}(\Omega)$ and  there exists a function $v\in L^1_{\text{loc}}(\Omega)$ such that 
\[
\int_\Omega v(x)\varphi(x)\ud x=(-1)^{|\alpha|}\int_{\Omega}u(x)D^{(\alpha)}\varphi(x)\ud x \qquad \text{for all} \qquad \varphi \in \mathcal{D}(\Omega), 
\]
we say that $v$ is the {\em $\alpha$-th weak derivative} of $u$ and  write $D^{\alpha}u=v$. 
}
We also use the notation $
\frac{\partial^k}{\partial x_j^k}u:=D^{\beta}u
$ as well as $\partial_{x_j^k}:=D^{\beta}u$,   for some 
multi-index  $\beta=(0,\ldots, k, \ldots,0)$ with $\beta_j=k$, $k\in \nat$. Furthermore, for $m\in \nat_0$, we write $D^mu$ for any (generalized) $m$-th order derivative of $u$, where $D^0u:=u$ and $Du:=D^1u$. Sometimes we shall use subscripts such as $D^m_x$ or  $D^m_t $ to emphasize that we only take derivatives with respect to $x=(x_1, \ldots, x_d)\in \Omega$ or $t\in \real$.

\subsection{Besov spaces}
\label{sect-Besov}

Due to the different contexts Besov spaces arose from they can be defined/characterized in several ways, e.g. via higher order differences, the Fourier-analytic approach or  decompositions with suitable building blocks, cf. \cite{Tri83, Tri08} and the references therein. Under certain restrictions on the parameters these different approaches might even coincide. Throughout this paper  we rely on the characterization of Besov spaces via  wavelet decompositions  and refer in this context to 
\cite{Coh03, Mey92}.  Let us briefly recall the concept: 
Wavelets are specific orthonormal bases for $L_2(\mathbb{R})$ that are obtained by dilating, translating and scaling one fixed function, the so--called  {\em mother wavelet} $\psi$. The mother wavelet is usually constructed by means of a so-called {\em multiresolution analysis,} that is, a sequence  $\{V_j\}_{j \in \mathbb{Z}}$  of shift-invariant, closed subspaces of $L_2(\mathbb{R})$ whose union is dense in $L_2$ while their intersection is trivial. Moreover, all the spaces are related via dyadic dilation, and the space  $V_0$  is spanned  by the translates of  one fixed function $\phi$, called the {\em generator}.  In her fundamental work \cite{Dau1, Dau2}  I. Daubechies has shown that there exist families of compactly supported wavelets. By taking tensor products, a compactly supported orthonormal basis for $L_2(\mathbb{R}^d)$ can be constructed.\\
Let ${\phi}$ be a father wavelet of tensor product type on $\real^d$ and let $\Psi'=\{\psi_i: \ i=1,\ldots, 2^d-1\}$ be the set containing the corresponding multivariate mother wavelets such that, for a given $r\in \nat$ and some $N>0$ the following localization, smoothness and vanishing moment conditions hold. For all $\psi\in \Psi'$, 
\begin{align}
\supp{\phi}, \ \supp \psi   & \ \subset \ [-N,N]^d, \label{wavelet-1}\\
{\phi}, \ \psi  & \ \in \ C^r(\real^d), \label{wavelet-2}\\
\int_{\real^d} x^{\alpha}\psi(x)\ud x & \ =\ 0 \quad \text{ for all } \alpha \in \nat_0^d \ \text{ with } \  \ |\alpha|\leq r. \label{wavelet-3}
\end{align}  
We refer again to  \cite{Dau1, Dau2} for a detailed discussion. 
The set of all dyadic cubes in $\real^d$ with measure at most $1$ is denoted by 
\[
\mathcal{D}^{+}:=\left\{I\subset \real^d: \ I=2^{-j}([0,1]^d+k), \ j\in \nat_0, \ k\in \mathbb{Z}^d\right\}
\]
and we set $\mathcal{D}_j:=\{I\in \mathcal{D}^+: \ |I|=2^{-jd}\}.$ 
For the dyadic shifts and dilations of the father wavelet and the corresponding wavelets we use the abbreviations 
\begin{equation}\label{wavelet-4}
{\phi}_k(x):={\phi}(x-k), \quad \psi_{I}(x):=2^{jd/2}\psi(2^jx-k) \qquad \text{for}\quad  j\in \nat_0, \ k\in \mathbb{Z}^d, \ \psi\in \Psi'. 
\end{equation}
It follows that 
\[
\left\{{\phi}_k, \ \psi_{I}:  \ k\in \mathbb{Z}^d, \  I\in \mathcal{D}^+, \  \psi\in \Psi'\right\}
\]
is an orthonormal basis in $L_2(\real^d)$. 
Denote by $Q(I)$ some dyadic cube (of minimal size) such that $\supp \psi_I \subset Q(I)$ for every $\psi\in \Psi'$. Then, we clearly have $Q(I)=2^{-j}k+2^{-j}Q$ for some dyadic cube $Q$. Put $\Lambda'=\mathcal{D}^{+}\times \Psi'$.  
Then, every function $f\in L_2(\real^d)$ can be written as 
\[
f=
\sum_{k\in \mathbb{Z}^d}\langle f,{{\phi}}_k\rangle {{\phi}}_k +\sum_{(I,\psi)\in \Lambda'}\langle f, {\psi}_I\rangle \psi_I.  
\]
It will be convenient to include ${\phi}$ into the set $\Psi'$. We use the notation ${\phi}_I:=0$ for $|I|<1$, ${\phi}_I={\phi}(\cdot-k)$ for $I=k+[0,1]^d$, and can simply write 
\[
f=\sum_{(I,\psi)\in \Lambda}\langle f, {\psi}_I\rangle \psi_I, \qquad \Lambda=\mathcal{D}^+\times \Psi, \quad \Psi=\Psi'\cup \{{\phi}\}.
\]

We describe Besov spaces on $\real^d$ by decay properties of the wavelet coefficients, if the parameters fulfill certain conditions.  \\

\begin{theorem}[{Wavelet decomposition of Besov spaces}]\label{thm-wavelet-dec}
Let $0<p,q<\infty$ and $s>\max\left\{0,d(1/p-1)\right\}$. Choose $r\in \nat$ such that $r>s$ and construct a wavelet Riesz basis as described above. Then a function $f\in L_p(\real^d)$ belongs to the Besov space $B^s_{p,q}(\real^d)$ if, and only if, 
\begin{equation}\label{besov-decomp}
f=\sum_{k\in \mathbb{Z}^d}\langle f,{{\phi}}_k\rangle {\phi}_k +\sum_{(I,\psi)\in \Lambda'}\langle f, {\psi}_I\rangle \psi_I  
\end{equation}
(convergence in $\mathcal{S}'(\real^d)$) with 
\begin{align}
\|f|B^s_{p,q}(\real^d)\| 
&\sim  \left(\sum_{k\in \mathbb{Z}^d} |\langle f,{{\phi}}_k\rangle|^p\right)^{1/p} + \notag\\
& \qquad   \left(\sum_{j=0}^{\infty}2^{j\left(s+d(\frac 12-\frac 1p)\right)q}\left(\sum_{(I,\psi)\in \mathcal{D}_j\times \Psi'}|\langle f, {\psi}_{I}\rangle|^p\right)^{q/p}\right)^{1/q}<\infty.\label{besov-norm}
\end{align}
\end{theorem}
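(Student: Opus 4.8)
The plan is to obtain this as the classical wavelet characterisation of Besov spaces, deducing it from the Fourier-analytic (Littlewood--Paley) description of $B^s_{p,q}(\real^d)$, for which Bernstein's inequality, the Peetre maximal function, and — in the range $p,q>1$ — the Fefferman--Stein vector-valued maximal inequality are available; see \cite{Tri83, Tri08}. Throughout, the two parameter restrictions carry a precise meaning. The condition $s>\max\{0,d(1/p-1)\}$ ensures $B^s_{p,q}(\real^d)\hookrightarrow L_{\max(1,p)}(\real^d)$, so that the hypothesis $f\in L_p$ is automatic and the series genuinely represents $f$; the condition $r>s$ (hence $r+1$ vanishing moments and $C^r$-smoothness of $\phi,\psi$) supplies the cancellation and regularity budget needed in the almost-diagonal estimates below. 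Note that $s>\max\{0,d(1/p-1)\}$ together with $r>s$ also renders redundant the usual secondary requirement $r>\max\{0,d(1/p-1)\}-s$, whose right-hand side is negative here.

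First I would prove ``$f\in B^s_{p,q}\Rightarrow$ coefficient condition'' together with the estimate $\gtrsim$ in \eqref{besov-norm}. Decompose $f=\sum_{\nu\ge 0}f_\nu$ into Littlewood--Paley pieces, so that $\|f|B^s_{p,q}\|\sim\|(2^{\nu s}f_\nu)_\nu|\ell_q(L_p)\|$, and estimate $\langle f_\nu,\psi_I\rangle$ for $I\in\mathcal{D}_j$ in two regimes. For $\nu\le j$, subtract the degree-$r$ Taylor polynomial of $f_\nu$ on a neighbourhood of $\supp\psi_I$ and use the vanishing moments of $\psi$ together with Bernstein's inequality $\|D^{r+1}f_\nu\|\lesssim 2^{\nu(r+1)}\|f_\nu\|$ to gain a factor $2^{-(j-\nu)(r+1)}$ (after the natural $L_1$-normalisation of $\psi_I$). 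For $\nu\ge j$, the $C^r$-smoothness of $\psi$ together with the spectral localisation of $f_\nu$ yields decay $2^{-(\nu-j)r}$. Combining, $2^{jd/2}|\langle f,\psi_I\rangle|\lesssim\sum_\nu 2^{-|j-\nu|\sigma}A_\nu(Q(I))$ for a suitably large $\sigma$ (here $r>s$ enters), where $A_\nu(Q(I))$ is an average of $|f_\nu|$ over a fixed dilate of $Q(I)$. Summing over $I\in\mathcal{D}_j$ in $L_p$, then over $j$ in $\ell_q$ against the weight $2^{j(s+d/2-d/p)}$, a discrete Hardy/Young inequality in the scale parameter collapses the convolution, and the local averages are dominated by the Peetre (or, if $p,q>1$, the Hardy--Littlewood) maximal function of $f_\nu$, whose $\ell_q(L_p)$-norm is $\lesssim\|(2^{\nu s}f_\nu)_\nu|\ell_q(L_p)\|\sim\|f|B^s_{p,q}\|$.

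Conversely, given a sequence with the right-hand side of \eqref{besov-norm} finite, set $g:=\sum_k\langle f,\phi_k\rangle\phi_k+\sum_{(I,\psi)}\langle f,\psi_I\rangle\psi_I$; the parameter restrictions guarantee convergence in $\mathcal{S}'(\real^d)$ and in $L_p(\real^d)$. To control $\|g|B^s_{p,q}\|$ I would estimate the Littlewood--Paley block $\Delta_\nu g=\sum_j\Delta_\nu\big(\sum_{I\in\mathcal{D}_j}\langle f,\psi_I\rangle\psi_I\big)$: for $j\ge\nu$ one transfers derivatives onto $\psi$ and uses $\psi\in C^r$ to get decay $2^{-(j-\nu)r}$ — this is where $r>s$ is used sharply — while for $j\le\nu$ the vanishing moments of the kernel of $\Delta_\nu$ against the smooth, localised $\psi_I$ give decay $2^{-(\nu-j)M}$ for any $M$. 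This produces the same almost-diagonal matrix in the scale parameter as before, and the same Hardy/maximal-function argument bounds $\|g|B^s_{p,q}\|$ by the sequence norm; orthonormality of the wavelet system finally identifies $g$ with $f$ in the direction ``coefficient condition $\Rightarrow f\in B^s_{p,q}$'', completing the equivalence and the estimate $\lesssim$ in \eqref{besov-norm}.

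I expect the main obstacle to be the quasi-Banach range $0<p\le 1$ and/or $0<q\le 1$, where the Fefferman--Stein vector-valued maximal inequality fails: there the Hardy--Littlewood maximal function in the first step must be replaced by the Peetre maximal function of $f_\nu$, computed with a decay exponent $a>d/p$ (affordable thanks to the margin left by $r>s$ and the vanishing moments), or, equivalently, one applies the ``$\varrho$-trick'' of raising everything to a power $\varrho<\min\{1,p,q\}$ before summing. Keeping the exponents consistent across the two regimes of each step while staying within the range $s>d(1/p-1)$ is the only genuinely delicate bookkeeping; the remaining ingredients — Taylor expansion, Bernstein's inequality, and summation of geometric series — are routine.
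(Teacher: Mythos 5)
The paper offers no proof of this theorem: it is stated in the appendix as recalled classical material, with the reader referred to \cite{Coh03, Mey92} (and implicitly \cite{Tri83, Tri08}) for the wavelet characterization of Besov spaces. Your sketch reconstructs precisely the standard argument underlying those references — the Frazier--Jawerth/Littlewood--Paley scheme: almost-diagonal estimates on the wavelet coefficients of the dyadic frequency blocks (vanishing moments against a Taylor remainder for the coarse-frequency regime, $C^r$-smoothness for the fine-frequency regime), a discrete Hardy/Young inequality in the scale parameter, and Peetre maximal functions (or the $\varrho$-trick) to handle $0<p\le 1$ or $0<q\le 1$ where the Fefferman--Stein inequality is unavailable. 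The outline is correct, and you place the hypotheses where they actually do their work: $s>\max\{0,d(1/p-1)\}$ guarantees that $f$ is a regular (locally integrable) distribution so the pairings $\langle f,\psi_I\rangle$ and the $L_p$-hypothesis are meaningful, and it makes the usual secondary moment condition vacuous, while $r>s$ controls the off-diagonal decay in both regimes. One small inaccuracy: the vanishing-moment condition \eqref{wavelet-3} is imposed only on the mother wavelets $\psi\in\Psi'$, not on the generator $\phi$ (which necessarily has $\int\phi\neq 0$); your parenthetical attributing moments to both is harmless, since the $\phi_k$-terms live at the single coarsest level and require no cancellation, but it should be corrected. As a blind reconstruction of a result the paper merely cites, this is exactly the right proof, though turning it into a complete argument would require the bookkeeping you yourself flag in the quasi-Banach range.
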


{
\begin{remark}{
 In particular, for our adaptivity scale \eqref{adaptivityscale}, i.e., $B^s_{\tau,\tau}(\real^d)$ with $s=d\left(\frac{1}{\tau}-\frac 1p\right)$,  we see that the quasi-norm  \eqref{besov-norm} becomes 
\begin{align}
\|f|B^s_{\tau,\tau}(\real^d)\|&\sim  \left(\sum_{k\in \mathbb{Z}^d} |\langle f,{{\phi}}_k\rangle|^{\tau}\right)^{1/\tau} +   
   \left(\sum_{j=0}^{\infty}2^{jd\left(\frac 12-\frac 1p\right)\tau}\sum_{(I,\psi)\in \mathcal{D}_j\times \Psi'}|\langle f, {\psi}_{I}\rangle|^{\tau}\right)^{1/\tau}. \notag\label{besov-norm2}
\end{align}
}
\end{remark}
}

Corresponding function spaces on domains $\mathcal{O}\subset \real^d$ can be introduced via restriction, i.e., 
\begin{eqnarray*}
B^s_{p,q}(\mathcal{O})&=& \left\{f\in \mathcal{D}'(\mathcal{O}): \ \exists g\in B^s_{p,q}(\real^d), \ g\big|_{\mathcal{O}}=f \right\},\\
\|f|B^s_{p,q}(\mathcal{O})\|&=& \inf_{g|_{\mathcal{O}}=f}\|f|B^s_{p,q}(\real^d)\|. 
\end{eqnarray*}
Alternative (different or equivalent) versions of this definition can be found, depending on possible additional properties of the distributions $g$ (most often their support). We refer to \cite{Tri08} for details and references. 

\remark{ 
We remark that  the Besov (and Kondratiev) spaces we are working with are defined in the setting of distributions, i.e., as subsets of $\mathcal{D}'(\mathcal{O})$,  and therefore may contain 'functions' which take complex values. However, when considering the fundamental parabolic problems, we restrict ourselves to the  real-valued setting: We assume the  coefficients of the differential operator $L$ to be real-valued as well as the right-hand side $f$, therefore, the solutions are real-valued as well. 
}

\end{document}